\newtheorem{theorem}{Theorem}
\newtheorem*{theorem*}{Theorem}
\newtheorem{example}[theorem]{Example}
\newtheorem{definition}[theorem]{Definition}
\newtheorem*{definition*}{Definition}
\newtheorem{lemma}[theorem]{Lemma}
\newtheorem{proposition}[theorem]{Proposition}
\newtheorem{remark}[theorem]{Remark}
\newtheorem{cor}[theorem]{Corollary}
\newtheorem*{cor*}{Corollary}
\newtheorem{question}{Question}
\newtheorem*{conjecture*}{Conjecture}
\numberwithin{theorem}{section}
\renewcommand\iff{%
\ifmmode\text{ if and only if }%
\else if and only if \fi}
\renewcommand{\and}{\wedge}
\renewcommand{\phi}{\varphi}
\newcommand{\elsoc}{\text{elsoc}}
\newcommand{\Mod}{\textnormal{Mod}}
\renewcommand{\mod}{\text{mod}}
\newcommand{\Ab}{\text{Ab}}
\newcommand{\rad}{\textnormal{rad}}
\newcommand{\D}{\textnormal{D}}
\newcommand{\End}{\textnormal{End}}
\newcommand{\Hom}{\textnormal{Hom}}
\newcommand{\Zg}{\textnormal{Zg}}
\newcommand{\pinj}{\textnormal{pinj}}
\newcommand{\im}{\textnormal{im}}
\newcommand{\mcal}[1]{\mathcal{#1}}
\newcommand{\mfrak}[1]{\mathfrak{#1}}
\newcommand{\st}{\ \vert \ }
\newcommand{\pp}{\textnormal{pp}}
\newcommand{\N}{\mathbb{N}}
\newcommand{\Z}{\mathbb{Z}}
\newcommand{\coker}{\text{coker}}
\newenvironment{pmat}{\left( \begin{smallmatrix}}{\end{smallmatrix} \right)}
\newcommand{\downmapsto}{\rotatebox[origin=c]{-90}{$\scriptstyle\longmapsto$}\mkern2mu}
\title{Ziegler closures of some unstable tubes}
\author{Lorna Gregory}
\thanks{The author acknowledges the support of EPSRC through Grant EP/K022490/1 and thanks Ivo Herzog and Mike Prest for useful discussions. This work was completed while the author was at the University of Manchester.}
\address{University of Camerino, School of Science and Technologies, Division
of Mathematics, Via Madonna delle Carceri 9, 62032 Camerino, Italy }
\email{lorna.gregory@gmail.com}
\date{\today}
\begin{document}
\maketitle

\begin{abstract}
We describe the modules in the Ziegler closure of ray and coray tubes in module categories over finite-dimensional algebras. We improve slightly on Krause's result for stable tubes by showing that the inverse limit along a coray in a ray or coray tube is indecomposable, so in particular, the inverse limit along a coray in a stable tube is indecomposable. In order to do all this we first describe the finitely presented modules over and the Ziegler spectra of iterated one-point extensions of valuation domains. Finally we give a sufficient condition for the $k$-dual of a $\Sigma$-pure-injective module over a $k$-algebra to be indecomposable.
\end{abstract}

Ray and coray tubes frequently occur in Auslander-Reiten quivers of finite-dimensional algebras. After stable tubes (also referred to as smooth tubes) they are the simplest form of Ringel and D'Este's coherent tubes.

Generalising Krause's definition, \cite[pg 20]{Krausegenmods}, of a generalised tube we introduce the notions of a generalised coray tube and a generalised ray tube. Our aim here is to use these notions to describe the Ziegler closures of unstable tubes containing no projective modules (dually, unstable tubes containing no injective modules).

As in the case of stable tubes, we show that every ray tube has finitely many non-finitely presented indecomposable pure-injectives in its Ziegler closure each of which is either a direct limit along a ray, an inverse limit along a coray or a generic module. Improving slightly on Krause's results we show that in any ray or coray tube (thus also in any stable tube) the inverse limit along a coray is indecomposable. This result was claimed in \cite[15.1.10]{PSL} but no proof is given and the proof indicated there does not work.

In section \ref{iteratedonepointext}, we describe the finitely presented modules over iterated one-point extensions of discrete valuation domains. This allows us, see section \ref{Indpis}, to describe the indecomposable pure-injectives over iterated one-point extensions of discrete valuation domains.

Using these results, we describe a functor from the module category of the iterated one-point extension of $k[[x]]$ to a module category over a finite-dimensional $k$-algebra containing a generalised ray tube such that every module in the closure of the generalised ray tube is a direct summand of some module in the image of this functor.

In section \ref{shortembeddings} we introduce short embeddings. These are embeddings $f:M\rightarrow L$ between finite-length modules over an artin algebra such that for all $a\in M$, if $\phi$ generates the pp-type of $a$ in $M$ and $\psi$ generates the pp-type of $f(a)$ in $L$ then the interval $[\psi,\phi]$ in the pp-$1$-lattice is finite-length. Equivalently, an embedding is short if the cokernel of $(f,-):(L,-)\rightarrow (M,-)$ in $(\mod-R,\Ab)^{\text{fp}}$ is finite-length.

Short embeddings allow us to investigate the endomorphism rings of direct limits along rays in ray and coray tubes.  Using results in \ref{shortembeddings}, we are able to show that direct limits along rays in ray and coray tubes are indecomposable and moreover that the canonical embedding of $k$ into their endomorphism rings factored out by the radical is an isomorphism. In section \ref{elsocanddual} we will show that this implies that their $k$-duals are indecomposable.

In the final section we put all this together to describe the Ziegler closures of ray and coray tubes for finite-dimensional algebras. We end by discussing some open questions.

Throughout this paper we switch freely between the more concrete pp-formula formalism and the more abstract functor category formalism.

Through out this paper, if $R$ is a ring then $\Mod-R$ (respectively $R-\Mod$) denotes the category of right $R$-modules (respectively left $R$-modules) and $\mod-R$ (respectively $R-\mod$) denotes that category of finitely presented right $R$-modules (respectively left $R$-modules).

\section{Preliminaries}
Let $R$ be a ring.

A \textbf{pp-$n$-formula} is a formula in the language $\mcal{L}_R=(0,+,(\cdot r)_{r\in R})$ of (right) $R$-modules of the form
\[\exists \overline{y} (\overline{x},\overline{y})H=0\] where $\overline{x}$ is a $n$-tuple of variables and $H$ is an appropriately sized matrix with entries in $R$. If $\phi$ is a pp-formula and $M$ is a right $R$-module then $\phi(M)$ denotes the set of all elements $\overline{m}\in M^n$ such that $\phi(\overline{m})$ holds. Note that for any module $M$, $\phi(M)$ is a subgroup of $M^n$ equipped with the addition induced by addition in $M$. A pair of pp-$n$-formulas $\phi/\psi$ is called a \textbf{pp-pair} if for all $R$-modules $M$, $\phi(M)\supseteq\psi(M)$.

If we weaken our definition of a pp-formula to include all formulas (in one variable) in the language of (right) $R$-modules, $\mcal{L}_R$, which are equivalent over the theory of $R$-modules, $T_R$, to a pp-formula then the $T_R$-equivalence classes of pp-$n$-formulas become a lattice under implication with the join of two formulas $\phi,\psi$ given by
\[(\phi+\psi)(x):=\exists y,z(x=y+z\and\phi(y)\and\psi(z))\] and the meet given by $\phi\wedge\psi$. Given a (right) $R$-module $M$, we write $\pp^1_R(M)$ for the lattice of pp-definable subgroups of $M$.

Let $M$ be an $R$-modules and $\overline{m}\in M^n$. The \textbf{pp-type} of $\overline{m}$ in $M$, denoted $\pp_R^M(\overline{m})$, is the set all pp-$n$-formulas $\phi$ such that $m\in \phi(M)$.

If $M$ is finitely presented module and $\overline{m}\in M^n$ then there is a pp-$n$-formula $\phi$ which \textbf{generates the pp-type} of $\overline{m}$ in $M$, that is, for all pp-formulas $\psi$, $\psi\geq\phi$ if and only if $\overline{m}\in\psi(M)$. Conversely, if $\phi$ is a pp-$n$-formula, then there exists a finitely presented module $M$ and $\overline{m}\in M^n$ such that $\phi$ generates the pp-type of $\overline{m}$ in $M$. See \cite[Section 1.2.2]{PSL} for proofs.

Any pp-pair $\phi/\psi$, gives rise to a finitely presented functor $F_{\phi/\psi}:\mod-R\rightarrow \Ab$ which sends a right $R$-module $M$ to $\phi(M)/\psi(M)$ and conversely, any finitely presented functor is isomorphic to one of the form $F_{\phi/\psi}$, \cite[10.2.30]{PSL}.

Given a lattice $L$, let $\sim$ be the congruence relation generated by the simple intervals in $L$. We define what it means for a lattice to have finite m-dimension. For the more general ordinal valued dimension, see \cite[section 7.2]{PSL}. We say that $L$ has \textbf{m-dimension} $0$ if $L/\sim$ is the one point lattice. We say that $L$ has \textbf{m-dimension} $n+1$ if $L/\sim$ has m-dimension $L$.

We say that a module has m-dimension $n$ if its lattice of pp-definable subgroups has m-dimension $n$.

An embedding of $R$-modules $f:M\rightarrow N$ is \textbf{pure} if for every pp-$1$-formula $\phi$, $f(\phi(M))=\phi(N)\cap f(M)$. A right $R$-module $M$ is \textbf{pure-injective} if it is injective over all pure-embeddings.

In section \ref{elsocanddual}, we will use that a module is pure-injective if and only if it is algebraically compact \cite[4.3.11]{PSL}. An $R$-module $M$ is said to be \textbf{algebraically compact} if every system of equations over $R$ in arbitrary many variables with parameters in $M$ such that every finite subsystem has a solution in $M$, has a solution in $M$. Equivalently, a module is algebraically compact if every collection of cosets of pp-definable subgroups which has the finite intersection property has non-empty intersection.

We say that a module is \textbf{$\Sigma$-pure-injective} if it has no descending chain of pp-definable subgroups. Every $\Sigma$-pure-injective module is pure-injective.

The  (right) \textbf{Ziegler spectrum} of a ring $R$ is a topological space with set of points, $\pinj_R$, the isomorphism classes of indecomposable pure-injectives and basis of open sets given by
\[\left(\phi/\psi\right):=\{N\in\pinj_R \st \phi(N)\neq \psi(N)\}\] where $\phi/\psi$ is a pp-pair.

Our descriptions of Ziegler closures and Ziegler spectras in this paper are all based on the Ziegler spectrum of a discrete valuation domain $V$.

The indecomposable pure-injectives over $V$ are the indecomposable finite-length modules $V/\mfrak{m}^l$ for $l\in\N$, the Pr\"{u}fer module $E(V/\mfrak{m})$ (that is, the injective hull of $V/\mfrak{m}$), the completion $\widehat{V}$ of $V$ and $Q(V)$ the field of fractions of $V$. See for instance \cite[5.1]{Zieglermodules}.

A subset $X$ of $\Zg_V$ is closed if the following two properties hold:
\begin{enumerate}
\item If $X$ contains infinitely many finite-length modules then $X$ contains $E(V/\mfrak{m})$, $\widehat{V}$ and $Q(V)$.
\item If $X$ contains $\widehat{V}$ or $E(V/\mfrak{m})$ then $X$ contains $Q(V)$.
\end{enumerate}

This description of the topology can be extracted from \cite[section 5.2.1]{PSL}.

A definable subcategory of $\Mod-R$ is a full subcategory closed under direct limits, products and pure-submodules. If $\mcal{D}$ is a definable subcategory then there exists pp-pairs $\phi_\lambda/\psi_\lambda$, $\lambda\in I$ such that $M\in\mcal{D}$ if and only if $\phi_\lambda(M)=\psi_\lambda(M)$ for all $\lambda\in I$ and conversely, all subcategories of this form are definable subcategories.

A definable subcategory of $\Mod-R$ is determined by the indecomposable pure-injective it contains \cite[5.1.4]{PSL}. Thus there is a bijective correspondence between closed subset of $\Zg_R$ and definable subcategories of $\Mod-R$.

\begin{definition}\label{dualitydef}
Let $\phi$ be a pp-$n$-formula in the language of right $R$-modules of the form $\exists \bar{y}(\bar{x},\bar{y})H=0$ where $\bar{x}$ is a tuple of $n$ variables, $\bar{y}$ is a tuple of $l$ variables, $H=(H' \ H'')^T$ and $H'$ (respectively $H''$) is a $n\times m$ (respectively $l\times m$) matrix with entries in $R$. Then $\D\phi$ is the pp-$n$-formula in the language of left $R$-modules $\exists \bar{z}\left(
                                                       \begin{array}{cc}
                                                         I & H' \\
                                                         0 & H'' \\
                                                       \end{array}
                                                     \right)\left(
                                                              \begin{array}{c}
                                                                \bar{x} \\
                                                                \bar{z} \\
                                                              \end{array}
                                                            \right)=0
$.

Similarly, let $\phi$ be a pp-$n$-formula in the language of left $R$-modules of the form $\exists \bar{y}H\left(
                                                                                                             \begin{array}{c}
                                                                                                               \bar{x} \\
                                                                                                               \bar{y} \\
                                                                                                             \end{array}
                                                                                                           \right)
=0$ where $\bar{x}$ is a tuple of $n$ variables, $\bar{y}$ is a tuple of $l$ variables, $H=(H' \ H'')$ and $H'$ (respectively $H''$) is a $m\times n$ (respectively $m\times l$) matrix with entries in $R$. Then $\D\phi$ is the pp-$n$-formula in the language of right $R$-modules $\exists \bar{z}(\bar{x},\bar{z})\left(
                                                       \begin{array}{cc}
                                                         I & 0 \\
                                                         H' & H'' \\
                                                       \end{array}
                                                     \right)=0
$.
\end{definition}

Note that the pp-formula $a|x$ for $a\in R$ is mapped by $D$ to a formula equivalent with respect to $T_R$ to $xa=0$ and the pp-formula $xa=0$ for $a\in R$ is mapped by $D$ to a formula equivalent with respect to $T_R$ to $a|x$.

\begin{theorem}\cite[Chapter 8]{Mikebook1}\cite[1.3.1]{PSL}
The map $\phi\rightarrow D\phi$ induces an anti-isomorphism between the lattice of right pp-$n$-formulae and the lattice of left pp-$n$-formulae. In particular, if $\phi,\psi$ are pp-$n$-formulae then $D(\phi+\psi)$ is equivalent to $D\phi\wedge D\psi$ and $D(\phi\wedge\psi)$ is equivalent to $D\phi+D\psi$.
\end{theorem}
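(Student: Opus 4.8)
The plan is to prove the result via character modules, realising elementary duality as a shadow of the $\Z$-duality $(-)^{\ast}:=\Hom_{\Z}(-,\Q/\Z)$. Recall that $\Q/\Z$ is an injective cogenerator of $\Ab$, so $(-)^{\ast}$ is exact and faithful, that $M^{\ast}$ is a left $R$-module when $M$ is a right $R$-module (via $(rf)(m)=f(mr)$) and vice versa, and that the canonical evaluation map $M\to M^{\ast\ast}$ is a pure embedding. The heart of the argument is the annihilator formula
\[(\D\phi)(M^{\ast})=(\phi(M))^{\circ}\]
for every right pp-$n$-formula $\phi$ and every right $R$-module $M$, where $(\phi(M))^{\circ}$ denotes the annihilator of $\phi(M)\sseq M^{n}$ inside $(M^{n})^{\ast}=(M^{\ast})^{n}$; the symmetric statement, with ``left'' and ``right'' interchanged, holds by the same proof.

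To establish the annihilator formula I would write $\phi(\bar x)$ as $\exists\bar y\,(\bar x,\bar y)H=0$ with $H=(H'\ H'')^{T}$, $H'$ of size $n\times m$ and $H''$ of size $l\times m$, so that $\phi(M)$ is the image of $\ker\bigl(H\colon M^{n+l}\to M^{m}\bigr)$ under the projection $M^{n+l}\to M^{n}$ (where $H$ acts by right multiplication on row vectors). Applying $(-)^{\ast}$ and using its exactness, the annihilator of an image is the kernel of the dual map and the annihilator of a kernel is the image of the dual map; chasing this through, and using the adjunction $g(mr)=(rg)(m)$ to see that the transpose of right multiplication by $H$ is left multiplication by $H$ on column vectors over $M^{\ast}$, one obtains
\[(\phi(M))^{\circ}=\{\bar f\in(M^{\ast})^{n}\st \exists\bar g\in(M^{\ast})^{m}\ \bigl(H'\bar g=\bar f\ \and\ H''\bar g=0\bigr)\}.\]
Comparing with Definition~\ref{dualitydef}, which presents $\D\phi$ as $\exists\bar z\,\begin{pmat}I&H'\\0&H''\end{pmat}\binom{\bar x}{\bar z}=0$, i.e.\ as $\exists\bar z\,(\bar x=-H'\bar z\ \and\ H''\bar z=0)$, the two sets coincide after the substitution $\bar g\mapsto-\bar z$. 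This matching of the block matrix against the dualised projection — in particular keeping straight which side each batch of scalars acts on — is the step I expect to be the main, if purely mechanical, obstacle.

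Granting the annihilator formula, the rest is formal. For well-definedness and order reversal, take right pp-formulas $\psi\le\phi$ and any left $R$-module $N$; applying the formula with $M=N^{\ast}$ gives $(\D\phi)(N^{\ast\ast})=(\phi(N^{\ast}))^{\circ}\sseq(\psi(N^{\ast}))^{\circ}=(\D\psi)(N^{\ast\ast})$ since annihilation reverses inclusions, and intersecting with $N^{n}$ is legitimate because $N\to N^{\ast\ast}$ is pure, so $(\D\phi)(N)\sseq(\D\psi)(N)$; hence $\D\phi\le\D\psi$, and in particular $\psi\equiv\phi$ implies $\D\psi\equiv\D\phi$, so $\D$ descends to $T_R$-equivalence classes and reverses order. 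Applying the formula twice (once on each side) together with the double-annihilator identity $\bigl((\phi(M))^{\circ}\bigr)^{\circ}\cap M^{n}=\phi(M)$ — valid because $\Q/\Z$ cogenerates $\Ab$ — and purity of $M\to M^{\ast\ast}$ yields $\D\D\phi\equiv\phi$ (and symmetrically on the left), so $\D$ is an inclusion-reversing bijection between the lattice of right and the lattice of left pp-$n$-formulae, with inverse $\D$. Finally, an order anti-isomorphism between lattices automatically interchanges joins and meets: for a left pp-formula $\theta$ we have $\theta\le\D(\phi+\psi)$ iff $\D\theta\ge\phi+\psi$ iff $\D\theta\ge\phi$ and $\D\theta\ge\psi$ iff $\theta\le\D\phi\and\D\psi$, giving $\D(\phi+\psi)\equiv\D\phi\and\D\psi$, and dually $\D(\phi\and\psi)\equiv\D\phi+\D\psi$.
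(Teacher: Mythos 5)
Your proposal is correct: the annihilator identity $(\D\phi)(M^{\ast})=(\phi(M))^{\circ}$, order reversal via purity of $M\to M^{\ast\ast}$, the double-annihilator argument for $\D\D\phi\equiv\phi$, and the formal interchange of meets and joins all check out, including the sign bookkeeping against Definition~\ref{dualitydef}. The paper itself gives no proof, only citations, and your character-module argument is essentially the standard proof found in the cited sources, so there is nothing further to compare.
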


Let $R$ be a $k$-algebra. We denote that standard dual $\Hom(-,k)$ of an $R$-module $M$, respectively a morphism $f$, by $M^*$, respectively $f^*$.
\begin{lemma}\label{kdualandduality}
Let $R$ be a $k$-algebra and $M$ an $R$-module. If $\phi(M)\leq \psi(M)$ then $D\psi(M^*)\leq D\phi(M^*)$.
\end{lemma}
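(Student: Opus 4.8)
The plan is to prove the following identification and then read the lemma off from it: for every pp-$n$-formula $\theta$ over right $R$-modules, $D\theta(M^*)$ is the annihilator of $\theta(M)$ inside $(M^*)^n$ with respect to the canonical $k$-bilinear pairing $\langle\bar a,\bar g\rangle=\sum_{i=1}^n g_i(a_i)$ on $M^n\times(M^*)^n$, where $M^*$ carries its usual left $R$-module structure $(rg)(m)=g(mr)$. Writing $\theta(M)^\circ$ for that annihilator, the claim is $D\theta(M^*)=\theta(M)^\circ$. Granting it, the lemma is immediate: $\phi(M)\le\psi(M)$ forces $\psi(M)^\circ\le\phi(M)^\circ$, that is, $D\psi(M^*)\le D\phi(M^*)$.

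The inclusion $D\theta(M^*)\subseteq\theta(M)^\circ$ follows by unwinding Definition~\ref{dualitydef}. Write $\theta(\bar x)$ as $\exists\bar y\,(\bar x H'+\bar y H''=0)$, so that $D\theta(\bar x)$ becomes $\exists\bar z\,(\bar x=-H'\bar z\ \wedge\ H''\bar z=0)$. Given $\bar a\in\theta(M)$ with witness $\bar b$ and $\bar g\in D\theta(M^*)$ with witness $\bar h$, one expands $\langle\bar a,\bar g\rangle$, moves each ring-matrix entry past the relevant $h_j$ using $(rg)(m)=g(mr)$, then substitutes $\sum_i a_iH'_{ij}=-\sum_p b_pH''_{pj}$ and finally $\sum_j H''_{pj}h_j=0$ to get $\langle\bar a,\bar g\rangle=0$. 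This is a routine computation and I would display only this chain of identities.

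The reverse inclusion is the real content. I would obtain it from the standard compatibility of elementary duality with the $k$-dual over a $k$-algebra: for any finitely presented functor $F$ on $\mod$-$R$ and any right $R$-module $M$ there is a natural isomorphism $(FM)^*\cong (dF)(M^*)$, where $d$ is the duality of finitely presented functors, acting on pp-pair functors by $d F_{\phi/\psi}=F_{D\psi/D\phi}$ (see \cite[1.3]{PSL}, \cite[Chapter 8]{Mikebook1}). Applying this with $F=F_{(\bar x=\bar x)/\theta}$, whose dual is the subgroup functor $N\mapsto D\theta(N)$, and using that $k$ is a field so $\Hom_k(-,k)$ is exact, $(FM)^*=(M^n/\theta(M))^*$ is canonically the subgroup of $(M^*)^n$ of functionals killing $\theta(M)$, namely $\theta(M)^\circ$; unravelling the natural isomorphism identifies this subgroup with $D\theta(M^*)$, refining the inclusion of the previous paragraph to an equality. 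The point to watch — the main obstacle — is that this compatibility must be quoted for arbitrary $M$, not merely finite-dimensional or finitely presented $M$, which is legitimate precisely because $k$ is a field. If one prefers to avoid identifying the full annihilator, the same conclusion follows from the weaker assertion $FM=0\Rightarrow(dF)(M^*)=0$ (i.e.\ $M\mapsto M^*$ respects duality of definable subcategories): since $\phi(M)\le\psi(M)$ we have $(\phi\wedge\psi)(M)=\phi(M)$, so $F_{\phi/(\phi\wedge\psi)}$ vanishes on $M$; hence its dual $F_{(D\phi+D\psi)/D\phi}$, using $D(\phi\wedge\psi)\equiv D\phi+D\psi$ from the anti-isomorphism theorem, vanishes on $M^*$, so $D\phi(M^*)+D\psi(M^*)=D\phi(M^*)$ and therefore $D\psi(M^*)\le D\phi(M^*)$.
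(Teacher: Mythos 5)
Your argument is correct. Note that the paper itself gives no proof of this lemma at all -- it is stated and immediately used, being the standard fact (essentially \cite[1.3.15]{PSL}) that over a $k$-algebra $D\theta(M^*)$ is exactly the annihilator of $\theta(M)$ under the evaluation pairing. Your proof is that standard argument: the explicit matrix computation correctly gives the inclusion $D\theta(M^*)\subseteq\theta(M)^\circ$, and you rightly identify that this inclusion alone runs the wrong way for the lemma, so the reverse containment (or at least the vanishing statement) is what is actually needed; quoting $(FM)^*\cong(dF)(M^*)$ for arbitrary $M$ over a $k$-algebra is a legitimate citation and does the job. The short alternative at the end -- from $(\phi\wedge\psi)(M)=\phi(M)$ deduce $F_{\phi/(\phi\wedge\psi)}(M)=0$, dualise to $F_{(D\phi+D\psi)/D\phi}(M^*)=0$, and read off $D\psi(M^*)\leq D\phi(M^*)$ -- is the cleanest self-contained derivation and is all the paper's applications require.
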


Thus if $M$ is an $R$-module with dcc (respectively acc) on pp-definable subgroups then $M^*$ has acc (respectively dcc) on pp-definable subgroups. So, in particular, if the pp-$1$-lattice of $M$ is finite-length then so is the pp-$1$-lattice of $M^*$.

Another key tool in this paper is the use of additive functors $F:\Mod-R\rightarrow \Mod-S$ which commute with direct limits and arbitrary products. Such functors are called \textbf{interpretation functors}. These are exactly the functors which are finitely presented when composed with the forgetful functor to $\Ab$ and thus given by pp-pairs. This is where there name comes from \cite{intmodulesinmodules}. See \cite[7.2]{exdefcats} and \cite[12.9]{Defaddcats} for the equivalence.

We will need the following facts about interpretation functors $F:\Mod-R\rightarrow \Mod-S$.

\begin{enumerate}
\item There exists an $n\in\N$, such that for all $M\in\Mod-R$, there is a lattice embedding $\pp_S^1(FM)\hookrightarrow \pp_R^n(M)$.
\item If $M$ is pure-injective then $FM$ is pure-injective. See \cite[3.16]{intmodulesinmodules} or \cite[6.1]{exdefcats}.
\item If $M$ is $\Sigma$-pure-injective (respectively has the acc on pp-definable subgroups) then $FM$ is $\Sigma$-pure-injective (respectively has the acc on pp-definable subgroups). Follows from (1).
\item If $\pp_R^1(M)$ has m-dimension $\alpha$ then $\pp_S^1(FM)$ has m-dimension less than or equal to $\alpha$. Follows from (1) plus the fact that the m-dimension of $\pp_R^1(M)$ is equal to the m-dimension of $\pp_R^n(M)$.
\item If $\mcal{D}$ is a definable subcategory of $\Mod-R$ then after closing under direct summands and isomorphism, $F\mcal{D}$ is a definable subcategory. See \cite[3.8]{Abcatsanddefaddcats}.
\item If the m-dimension of $\pp_R^1$ is $\alpha$ then the m-dimension of the smallest definable subcategory containing the image of $F$ is less that or equal to $\alpha$.
\end{enumerate}

\section{Iterated one-point extensions of discrete valuation domains}\label{iteratedonepointext}

Let $R$ be a ring, $k$ a field and $L$ a $k-R$-bimodule. The one-point extension of $R$ by $L$ is the ring
\[R[L]:=\left(
    \begin{array}{cc}
      R & 0 \\
      L & k \\
    \end{array}
  \right).
\]

Right modules over $R[L]$ may be viewed as triples $(M_0,M_1,\Gamma_M)$ where $M_0$ is a $k$-vector space, $M_1$ is a right $R$-module and $\Gamma_M:M_0\rightarrow \Hom_R(L,M_1)$ is a $k$-homomorphism. A morphism between two triples $(N_0,N_1,\Gamma_N)$ and $(M_0,M_1,\Gamma_M)$ is given by a pair $(f_0,f_1)$ where $f_0:N_0\rightarrow M_0$ is a $k$-vector space homomorphism, $f_1:N_1\rightarrow M_1$ is an $R$-module homomorphism and the following diagram commutes
\[\xymatrix@R=20pt@C=80pt{
  N_0 \ar[d]_{f_0} \ar[r]^{\Gamma_N} & \Hom_R(L,N_1) \ar[d]^{\Hom(L,f_1)} \\
  M_0 \ar[r]^{\Gamma_M} & \Hom_R(L,M_1)   }.\]

There are two full and faithful embeddings of $\Mod-R$ into $\Mod-R[L]$:
\[F_0:\Mod-R\rightarrow \Mod-R[L] \ \ M\mapsto (0,M,0)\]
and
\[F_1:\Mod-R\rightarrow \Mod-R[L] \ \ M\mapsto (\Hom(L,M),M, \text{Id}_{\Hom(L,M)}).\]

The forgetful functor
\[r:\Mod-R[L]\rightarrow R \ \ (M_0,M_1,\Gamma)\mapsto M_1\] is right adjoint to $F_0$ and left adjoint to $F_1$.

Each of these additive functors commute with direct limits and arbitrary products and thus are interpretation functors.

Throughout the rest of this section, let $V$ be a discrete valuation domain with maximal ideal $\mfrak{m}$ and let $R_0:=V$, $L_0=V/\mfrak{m}=:k$,

\[R_{n+1}:=\left(
             \begin{array}{cc}
               R_n & 0 \\
               L_n & k \\
             \end{array}
           \right)
\] and $L_{n+1}=F_1L_n$.

Finally, for each $n\in \N$, let $T_n:=(k,0,0)\in\Mod-R_n$.

The category of finitely presented modules over a discrete valuation domain $V$ is Krull-Schmidt and the indecomposable finitely presented modules are $V$ and  $V/\mfrak{m}^n$ where $n\in\N$.

Our goal for the rest of this section is to prove the following theorem and thus classify the finitely presented right $R_n$-modules.

\begin{theorem}\label{descfpind}
The category of finitely presented right modules over $R_n$ is Krull-Schmidt. The indecomposable finitely presented modules over $R_n$ are of the form $T_n$, $F_0^{m}F_1^{n-m}N$ where $0\leq m\leq n$ and $N$ is an indecomposable finitely presented module over $V$ and $F_0^{n-k-l}F_1^{l}T_k$ where $k+l\leq n$ and $0\leq k,l$.
\end{theorem}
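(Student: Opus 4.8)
The plan is to prove the theorem by induction on $n$, using the recollement-type structure coming from the one-point extension together with the two embeddings $F_0,F_1:\Mod-R_n\to\Mod-R_{n+1}$ and the forgetful functor $r:\Mod-R_{n+1}\to\Mod-R_n$. The base case $n=0$ is the classical classification of finitely presented modules over a discrete valuation domain: the indecomposables are $V$ and $V/\mfrak{m}^l$, and $T_0=(k,0,0)=V/\mfrak{m}$ is already of the form $F_0^0F_1^0 N$ with $N=V/\mfrak{m}$, while there are no modules of the type $F_0^{n-k-l}F_1^lT_k$ when $n=0$. So the base case reduces to the statement that $\mod-V$ is Krull–Schmidt with the stated indecomposables, which is standard.

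For the inductive step, suppose the theorem holds for $R_n$ and let $M=(M_0,M_1,\Gamma_M)$ be an indecomposable finitely presented $R_{n+1}$-module. First I would establish the general structural facts about $\mod-R[L]$ for $L$ a $k$–$R$-bimodule that is finitely presented (here finite-dimensional) as a $k$-space on the left: namely that $\mod-R[L]$ is Krull–Schmidt whenever $\mod-R$ is (e.g.\ because $R[L]$ is then a semiperfect ring, or by a direct Fitting-lemma argument using that $\End$ of a finitely presented module is semiperfect), and that every finitely presented $R[L]$-module fits into a short exact sequence relating it to $F_0M_1'$ and the "top" part $(M_0,0,0)$. Concretely I would use the functor $F_0$ and its adjoint $r$, plus the simple module $T_{n+1}=(k,0,0)$, to write down a canonical filtration: the counit of the $(F_0,r)$-adjunction gives a map $F_0 rM=F_0M_1\to M$ which is a monomorphism with cokernel $(M_0,0,0)\cong T_{n+1}^{(\dim_k M_0)}$. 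So $M$ is built from $F_0$(something in $\mod-V$-side, i.e.\ in $\mod-R_n$) and copies of $T_{n+1}$.

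Next I would run the case analysis on an indecomposable $M$. If $M_0=0$ then $M=F_0 M_1$, and since $F_0$ is full, faithful and exact it reflects indecomposability and direct-sum decompositions, so $M_1$ is an indecomposable finitely presented $R_n$-module; by the inductive hypothesis $M_1\in\{T_n\}\cup\{F_0^mF_1^{n-m}N\}\cup\{F_0^{n-k-l}F_1^lT_k\}$, and applying $F_0$ lands us exactly in the claimed list for $R_{n+1}$ (note $F_0 T_n=F_0^{1}F_1^0T_n$, covered by the third family with $k=n$, $l=0$, and the ambient index $n+1$). If $M_1=0$ then $M=(M_0,0,0)$ is a sum of copies of $T_{n+1}$, so indecomposability forces $M=T_{n+1}$, the first listed module. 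The remaining, and genuinely substantive, case is $M_0\neq0\neq M_1$: here I would analyze the structure map $\Gamma_M:M_0\to\Hom_R(L_n,M_1)$, decompose $M_1$ into indecomposables via the inductive hypothesis, and show — using that $L_n=F_1^{n}(V/\mfrak{m})=F_1^nL_0$ and computing $\Hom_{R_n}(L_n,-)$ on each indecomposable summand of $M_1$ — that an indecomposable $M$ with both components nonzero must have $M_1$ indecomposable of one of the allowed forms and $\Gamma_M$ essentially the identity onto a one-dimensional summand, i.e.\ $M\cong F_1(M_1')$ for a suitable $M_1'$; then unwinding $F_1$ together with the inductive description of $M_1'$ yields precisely the modules $F_0^mF_1^{n+1-m}N$ (when $M_1'$ is $F_0^{m-1}F_1^{n+1-m}N$-type, pushed up by $F_1$) and the modules $F_0^{n+1-k-l}F_1^lT_k$ with $l\geq1$. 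The key computational input is the evaluation of $\Hom_{R_n}(L_n,-)$ on the three families of indecomposables, which controls when $\Gamma_M$ can "glue" a top piece to a given $M_1$ indecomposably.

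The main obstacle I anticipate is exactly this last case: I need to rule out indecomposable modules whose structure map $\Gamma_M$ has rank $\geq2$ or which glue $M_0$ non-trivially across several indecomposable summands of $M_1$, and to show every surviving indecomposable is in the image of $F_1$ (up to the already-handled $F_0$ and $T_{n+1}$ pieces). I expect to handle this by showing that $\dim_k\Hom_{R_n}(L_n,N)\le1$ for each relevant indecomposable $N$ over $R_n$ (this is where the inductive structure of $L_n=F_1 L_{n-1}$ and the adjunction $r\dashv F_1$ pay off, since $\Hom_{R_n}(F_1 X,Y)\cong\Hom_{R_{n-1}}(X,rY)$ reduces the Hom-computation one level down, ultimately to $\Hom_V(V/\mfrak{m},N)$ which is at most one-dimensional), and then using a straightforward idempotent-splitting argument: any decomposition of $M_1$ compatible with a rank-$\le1$ gluing map lifts to a decomposition of $M$, so indecomposability of $M$ forces $M_1$ indecomposable and $\Gamma_M$ of rank exactly one, hence $M$ in the image of $F_1$. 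Once that is in place, assembling the list and checking no coincidences among the three families (a bookkeeping exercise in the indices $m,k,l$) completes the induction, and Krull–Schmidt for $\mod-R_{n+1}$ follows from semiperfectness of $R_{n+1}=R_n[L_n]$.
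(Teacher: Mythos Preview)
Your overall architecture matches the paper's: induction on $n$, split off $\ker\Gamma$, then a trichotomy into $F_0$-image, $T_{n+1}$, and $F_1$-image, with the bound $\dim_k\Hom_{R_n}(L_n,N)\le 1$ playing the decisive role. Two points, however, deserve correction.

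First, a small one: the adjunction you invoke is backwards. The paper records $F_0\dashv r\dashv F_1$, so $\Hom_{R_{n}}(F_1X,Y)\not\cong\Hom_{R_{n-1}}(X,rY)$ in general. What you actually need is that $F_1$ is full and faithful (so $\Hom(F_1L_{n-1},F_1N')\cong\Hom(L_{n-1},N')$), together with $\Hom(F_1L_{n-1},F_0K)=0$ and a direct computation for $T_n$; this is how the paper obtains the bound $\dim\Hom(L_n,N)\le 1$ as part of its condition $B_n$.

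Second, and more substantively: the step you label a ``straightforward idempotent-splitting argument'' is precisely where the real work lies, and the bound $\dim\Hom(L_n,N)\le 1$ alone does not deliver it. Given $M_1=\bigoplus_jN_j$ with each $\Hom(L_n,N_j)$ at most one-dimensional, an injective $\Gamma:M_0\to\Hom(L_n,M_1)$ will typically spread each basis vector across several summands; to split $M$ you must \emph{change the decomposition of $M_1$} so that a suitable basis of $M_0$ becomes diagonal. Whether this can always be done is not automatic: if two non-isomorphic summands $N_1,N_2$ had $\Hom(N_1,N_2)=\Hom(N_2,N_1)=0$ but both received a nonzero map from $L_n$, then $(k,N_1\oplus N_2,h_1+h_2)$ would be a genuine new indecomposable. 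The paper rules this out by carrying a second inductive statement $A_n$ alongside $B_n$: for any injective $\Gamma:M_0\to\Hom(L_n,M)$ there exist a basis $v_1,\dots,v_d$ of $M_0$ and orthogonal idempotents $e_i\in\End(M)$ with $e_i\Gamma(v_i)=\Gamma(v_i)$ and $e_iM$ indecomposable. The base case $A_0$ already needs the structure theory of a discrete valuation domain (one shows the saturation of the image of $M_0$ inside $M$ is pure, hence a summand), and the inductive step $A_{n-1}\wedge B_n\Rightarrow A_n$ uses an explicit automorphism of $M_1=F_1N\oplus T_n^l$ to separate the $F_1N$-components from the $T_n^l$-components of $\Gamma$. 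Your proposal should incorporate this diagonalisation statement explicitly into the induction rather than treating it as a consequence of the $\dim\le 1$ bound.
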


\begin{lemma}\label{fpfp}
If $M=(M_0,M_1,\Gamma)$ is finitely presented then $M_1$ is finitely presented and $M_0$ is finite-dimensional.
\end{lemma}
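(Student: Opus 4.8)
The plan is to use the fact that $R_n[L]$ is obtained from $R_n$ by a one-point extension, together with the description of morphisms in $\Mod-R[L]$ as triples, to track finite presentability through a projective presentation. First I would identify a convenient set of projective generators for $\Mod-R[L]$: since $F_0$ and $r$ are both exact (and $F_0$ is left adjoint to the exact functor $r$, while $r$ is also left adjoint to $F_1$), the module $F_0(R)$ is projective, and the ``top'' projective $P:=(k,L,\mathrm{Id})$ — equivalently $F_1$ applied to $R$-projectives in the relevant sense, or rather the representable at the new vertex — is projective. Concretely every finitely generated projective $R[L]$-module is a direct summand of a finite direct sum of copies of $F_0(R)$ and $P=(k,L,\mathrm{Id})$. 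The key point is that $r(F_0(R))=R$ and $r(P)=L$ are both finitely presented as $R$-modules (here $L=L_{n}=F_1^{n}k$, and one checks inductively that each $L_n$ is a finitely presented $R_n$-module — this is where the hypothesis that we are iterating one-point extensions of a Noetherian ring like $V$ is used, or one simply notes $L_n$ is the image under an interpretation functor of a finitely presented module).

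Next I would take a finite projective presentation $Q_1\xrightarrow{g} Q_0\to M\to 0$ of $M$ in $\Mod-R[L]$ with $Q_0,Q_1$ finite direct sums of copies of $F_0(R)$ and $P$. Applying the forgetful functor $r$, which is exact and commutes with finite direct sums, yields an exact sequence $r(Q_1)\to r(Q_0)\to M_1\to 0$ of $R$-modules in which $r(Q_0)$ and $r(Q_1)$ are finite direct sums of copies of $R$ and $L$, hence finitely presented $R$-modules; since $\mod-R$ is closed under cokernels, $M_1$ is finitely presented. For the statement about $M_0$, I would apply instead the functor sending a triple to its $0$-component (this is $\Hom_{R[L]}(P',-)$ for the appropriate representable, or just read it off the triple description): $P$ has $0$-component $k$ and $F_0(R)$ has $0$-component $0$, so $(Q_1)_0\to (Q_0)_0\to M_0\to 0$ is an exact sequence of finite-dimensional $k$-vector spaces, whence $M_0$ is finite-dimensional.

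The main obstacle — and the only place real content enters — is establishing that $L$ (that is, each $L_n$) is a finitely presented $R_n$-module, so that $r(P)$ is finitely presented; without this the argument that $M_1$ is finitely presented collapses. I would handle this by induction on $n$: $L_0=k=V/\mfrak{m}$ is finitely presented over $V$ since $V$ is Noetherian, and $L_{n+1}=F_1L_n$ where $F_1:\Mod-R_n\to\Mod-R_{n+1}$ is an interpretation functor (it commutes with direct limits and products), so it sends finitely presented modules to finitely presented modules; alternatively one computes directly that $r(F_1 L_n)=\Hom_{R_n}(L_n,L_n)$ fits into a finite presentation. A secondary, more routine point is checking that the finitely generated projectives of $\Mod-R[L]$ are exactly the summands of sums of $F_0(R)$ and $P$, which follows from the adjunctions: $\Hom_{R[L]}(F_0(R),-)$ and $\Hom_{R[L]}(P,-)$ are exact and jointly detect zero on $\Mod-R[L]$, and any finitely generated module is a quotient of a finite sum of these, so its projective cover (or a projective cover in the presentation) is of the stated form.
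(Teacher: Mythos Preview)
Your approach is correct and is essentially the paper's argument, but you have added scaffolding that the paper avoids. The paper simply takes a \emph{free} presentation $R_{n+1}^l\to R_{n+1}^m\to M\to 0$, observes that $R_{n+1}$ as a right module over itself is the triple $(k,\,R_n\oplus L_n,\,\Gamma)$, and then applies the two exact ``component'' functors: $r$ gives $rR_{n+1}^l\to rR_{n+1}^m\to M_1\to 0$ with $rR_{n+1}=R_n\oplus L_n$ finitely presented, while the $0$-component gives $k^l\to k^m\to M_0\to 0$. Your decomposition of the projectives into $F_0(R)$ and $P=(k,L,\mathrm{Id})$ is just the splitting $R_{n+1}\cong F_0(R_n)\oplus P$, so nothing is gained by treating these pieces separately; the paper's version is shorter precisely because it does not bother.

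Your worry that one must check $L_n$ is finitely presented over $R_n$ is exactly the content of the paper's unargued claim that $rR_{n+1}$ is finitely presented, so you are right to flag it. One caution: the line ``$F_1$ is an interpretation functor, so it sends finitely presented modules to finitely presented modules'' is not valid in general---commuting with direct limits and products does not force preservation of finite presentability. The argument you offer in parallel, that each $R_n$ is Noetherian (iterated one-point extensions of the DVR $V$ by finite-dimensional bimodules) and each $L_n$ is finite-dimensional over $k$, hence finitely generated and so finitely presented, is the correct one; use that and drop the interpretation-functor justification.
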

\begin{proof}
Suppose $R_{n+1}^l\xrightarrow{f} R_{n+1}^m\rightarrow M\rightarrow 0$ is a presentation for $M$. Applying the exact functor $r$ we get that $rR_{n+1}^l\xrightarrow{rf} rR_{n+1}^m\rightarrow M_1\rightarrow 0$ is a presentation for $M_1$. Since $rR_{n+1}$ is finitely presented, $M_1$ is finitely presented.

As a module over itself $R_{n+1}$ is $(k,R_n\oplus L_n, \Gamma)$ where $\Gamma:k\rightarrow \Hom(L_n,R_n\oplus L_n)$ take $1\in k$ to the homomorphism which sends $l\in L_n$ to $(0,l)\in R_n\oplus L_n$. Thus $k^l\xrightarrow{f_0}k^m\rightarrow M_0\rightarrow 0$ is exact and $M_0$ is finite-dimensional.

\end{proof}

In order to prove \ref{descfpind}, we prove the following two conditions by induction. Note that \ref{descfpind} follows from $B_n$ by induction on $n$.

\bigskip

\textbf{$A_n$}: If $M\in\mod-R_n$, $M_0$ a finite-dimensional $k$-vector space and $\Gamma:M_0\rightarrow \Hom(L_n,M)$ is an injective $k$-vector space homomorphism then there is a basis $v_1,\ldots,v_n$ for $M_0$ and orthogonal idempotent endomorphisms $e_i$ of $M$ such that $e_i\Gamma(v_i)=\Gamma(v_i)$ and $e_iM$ is indecomposable.

\bigskip

\textbf{$B_n$}: All finitely presented modules over $R_n$ are direct sums of modules of the form $F_1M, F_0M$ and $T_n$ where $M$ is a finitely presented indecomposable module over $R_{n-1}$ and $\dim\Hom(L_n,N)\leq 1$ for all indecomposable finitely presented modules $N$ over $R_n$.

\begin{lemma}\label{HomF1F0is0}
If $K\in\Mod-R_n$ then
$\Hom(F_1L_n,F_0K)=0$. Hence $F_1F_0K\cong F_0F_0K$.
\end{lemma}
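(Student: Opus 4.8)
The plan is to read off the $\Hom$-group directly from the triple description of $R_{n+1}$-modules and their morphisms. As $R_{n+1}$-modules we have $F_1L_n=(\Hom_{R_n}(L_n,L_n),L_n,\mathrm{Id})$ and $F_0K=(0,K,0)$, so a morphism $F_1L_n\to F_0K$ is a pair $(f_0,f_1)$ with $f_0\colon\Hom_{R_n}(L_n,L_n)\to 0$ and $f_1\colon L_n\to K$ an $R_n$-homomorphism, making the defining square commute.

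Since the codomain of $f_0$ is zero, $f_0=0$. The top edge of the square is $\mathrm{Id}_{\Hom_{R_n}(L_n,L_n)}$ and its bottom-left corner is $0$, so commutativity forces $\Hom(L_n,f_1)=0$ as a map $\Hom_{R_n}(L_n,L_n)\to\Hom_{R_n}(L_n,K)$. Evaluating this map at $\mathrm{id}_{L_n}$ yields $f_1=0$. Hence $(f_0,f_1)=0$ and $\Hom(F_1L_n,F_0K)=0$.

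For the final assertion I would simply feed this into the definition of $F_1$ as a functor on $\Mod-R_{n+1}$: since $L_{n+1}=F_1L_n$, we have $F_1(F_0K)=(\Hom_{R_{n+1}}(F_1L_n,F_0K),F_0K,\mathrm{Id})$, and the first component is $0$ by the first part, so $F_1(F_0K)=(0,F_0K,0)=F_0(F_0K)$.

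I do not expect any genuine obstacle here; the statement is essentially immediate from unwinding definitions. The only thing to watch is the bookkeeping of rings: the inner $F_0,F_1$ are functors $\Mod-R_n\to\Mod-R_{n+1}$ built from the bimodule $L_n$, whereas the outer ones in $F_1F_0K$ and $F_0F_0K$ are functors $\Mod-R_{n+1}\to\Mod-R_{n+2}$ built from $L_{n+1}=F_1L_n$, and one must remember that $\Hom_{R_n}(L_n,-)$ applied to $f_1$ and evaluated at $\mathrm{id}_{L_n}$ returns $f_1$.
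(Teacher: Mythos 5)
Your proof is correct and is essentially the paper's own argument: unwind the triple description, note $f_0=0$ because its codomain is zero, and use commutativity of the square evaluated at $\mathrm{id}_{L_n}$ to force $f_1=0$; the deduction of $F_1F_0K\cong F_0F_0K$ from $L_{n+1}=F_1L_n$ is exactly what the paper leaves implicit in its ``hence.''
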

\begin{proof}
Suppose $(f_0,f_1)$ is a homomorphism from $F_1L_n$ to $F_0K$. Then $f_0=0$ and hence $f_1\circ 1_{L_n}=0$. So $f_1=0$.
\end{proof}

%

\begin{remark}\label{twistingrepsbyauto}
Suppose $M\in\mod-R_n$, $M_0$ a finite-dimensional $k$-vector space, $\Gamma:M_0\rightarrow \Hom(L_n,M)$ is an injective $k$-vector space homomorphism and $\alpha$ is an automorphism of $M$. There is a basis $v_1,\ldots,v_n$ for $M_0$ and orthogonal idempotent endomorphisms $e_i$ of $M$ such that $e_i\Gamma(v_i)=\Gamma(v_i)$ and $e_iM$ is indecomposable if and only if there is a basis $v_1,\ldots,v_n$ for $M_0$ and orthogonal idempotent endomorphisms $e_i$ of $M$ such that $e_i\alpha\Gamma(v_i)=\alpha\Gamma(v_i)$ and $e_iM$ is indecomposable.
\end{remark}

\begin{lemma}\label{Andescfp}
If $A_n$ holds then all finitely presented right $R_{n+1}$-modules are direct sums of modules of the form $T_{n+1}:=(k,0,0)$, $(0,M_1,0)$ and $(k,M_1,\Gamma)$ where $M_1$ is a finitely presented indecomposable right $R_n$-module and $\Gamma$ is an injective $k$-vector space homomorphism.
\end{lemma}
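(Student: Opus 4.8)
The plan is to analyse a finitely presented module $M=(M_0,M_1,\Gamma_M)$ over $R_{n+1}$ directly in the triple description, splitting off the ``new'' direction $M_0$ against the $R_n$-module $M_1$ by means of $A_n$. By Lemma~\ref{fpfp} we already know that $M_0$ is a finite-dimensional $k$-vector space and that $M_1\in\mod-R_n$.

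First I would peel off copies of $T_{n+1}$. Put $K=\ker\Gamma_M$ and fix a $k$-complement $M_0=K\oplus M_0'$. Since $\Gamma_M(K)=0$, the pair $(K,0)$ is a submodule of $M$, isomorphic to $T_{n+1}^{\dim_k K}$, and $(M_0',M_1,\Gamma_M|_{M_0'})$ is a complementary submodule; hence $M\cong T_{n+1}^{\dim_k K}\oplus(M_0',M_1,\Gamma_M|_{M_0'})$, and we may assume from now on that $\Gamma_M$ is injective. Now apply $A_n$ to $M_1\in\mod-R_n$, the finite-dimensional space $M_0$ and the injective homomorphism $\Gamma_M\colon M_0\to\Hom(L_n,M_1)$: it produces a basis $v_1,\dots,v_t$ of $M_0$ (so $t=\dim_k M_0$) together with orthogonal idempotents $e_1,\dots,e_t\in\End_{R_n}(M_1)$ such that $e_i\Gamma_M(v_i)=\Gamma_M(v_i)$ and each $e_iM_1$ is indecomposable. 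Post-composition with the $e_i$ is an orthogonal family of idempotents on $\Hom(L_n,M_1)$, so $e_j\Gamma_M(v_i)=e_je_i\Gamma_M(v_i)=0$ for $j\neq i$; setting $e_0=1-\sum_{i=1}^te_i$ and identifying $\Hom(L_n,M_1)=\bigoplus_{i=0}^t\Hom(L_n,e_iM_1)$, this exactly says that each $\Gamma_M(v_i)$ lies in the $i$-th summand.

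Consequently $\Gamma_M$ respects the decompositions $M_0=\bigoplus_{i=1}^tkv_i$ and $M_1=\bigoplus_{i=0}^te_iM_1$, which yields the direct-sum decomposition
\[M=(M_0,M_1,\Gamma_M)\cong(0,e_0M_1,0)\oplus\bigoplus_{i=1}^t(kv_i,e_iM_1,\Gamma_M|_{kv_i})\]
in $\Mod-R_{n+1}$. For $i\geq1$ the module $e_iM_1$ is finitely presented and indecomposable over $R_n$, and since $\Gamma_M$ is injective and $v_i\neq0$ the restriction $\Gamma_M|_{kv_i}$ is injective, so each such summand has the form $(k,M_1',\Gamma)$ required by the statement. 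Finally $(0,e_0M_1,0)=F_0(e_0M_1)$, and $e_0M_1$, being a finitely presented $R_n$-module, is a finite direct sum of indecomposable finitely presented $R_n$-modules; applying $F_0$ summand-wise writes $(0,e_0M_1,0)$ as a direct sum of modules $(0,N,0)$ with $N$ indecomposable and finitely presented over $R_n$. Combining the three contributions gives the lemma.

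The only step requiring genuine care is verifying that the displayed isomorphism really is an internal direct-sum decomposition of the triple $M$: one must check that $(kv_i,e_iM_1)$ and $(0,e_0M_1)$ are submodules, that their $M_0$- and $M_1$-components add up complementarily, and that $\Gamma_M$ restricts to the maps indicated — and this is precisely what the computation locating $\Gamma_M(v_i)$ in the $i$-th $\Hom$-summand delivers. I do not expect a serious obstacle: all the real content of the lemma is packaged inside the hypothesis $A_n$, and what remains is bookkeeping in the triple category together with Lemma~\ref{fpfp} and the fact that finitely presented $R_n$-modules decompose into finitely many indecomposables.
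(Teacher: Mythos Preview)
Your proof is correct and follows essentially the same route as the paper: split off $\ker\Gamma_M$ as copies of $T_{n+1}$, then apply $A_n$ to the remaining injective $\Gamma_M$ to obtain the basis $v_1,\dots,v_t$ and orthogonal idempotents $e_1,\dots,e_t$, and read off the direct-sum decomposition of the triple. Your explicit verification that $e_j\Gamma_M(v_i)=0$ for $j\neq i$ (and hence that $\Gamma_M$ respects the decomposition) is exactly the content of the paper's observation that the pairs $(t_i,e_i)$ are orthogonal idempotents of the triple; you also make explicit the final step of decomposing $(0,e_0M_1,0)$ into indecomposable pieces, which the paper leaves implicit.
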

\begin{proof}
Let $(M_0,M_1,\Gamma)$ be an $R_{n+1}$-module.
If $(M_0,M_1,\Gamma)$ is finitely presented then $M_1$ is finitely presented and $M_0$ is finite-dimensional by \ref{fpfp}. If $\Gamma$ is not injective then \[(M_0,M_1,\Gamma)\cong (\ker\Gamma,0,0)\oplus (M_0/\ker\Gamma,M_1,\Gamma).\] So, without loss of generality, we may assume $\Gamma$ is injective. By $A_n$ there exists a basis $v_1,\ldots,v_n$ for $M_0$ and orthogonal idempotent endomorphisms $e_1,\ldots,e_n$ of $M$ such that $e_i\Gamma(v_i)=\Gamma(v_i)$. For each $1\leq i\leq n$, let $t_i:M_0\rightarrow M_0$ be a $k$-linear map such that $t_i(v_i)=v_i$ and $t_i(v_j)=0$ if $i\neq j$. So $(t_1,e_1),\ldots,(t_n,e_n)$ are orthogonal idempotents for $(M_0,M_1,\Gamma)$. Thus \[(M_0,M_1,\Gamma)=(0,(1-\sum_{i=1}^ne_i)M_1,0)\oplus\bigoplus_{i=1}^n(v_ik,e_iM_1,\Gamma|_{v_ik})\] as required.
\end{proof}


%
%
%
%
%
%

\begin{lemma}\label{dimHomLnLn}
For all $n\in\N$, $\dim\Hom(L_n,L_n)=1$.
\end{lemma}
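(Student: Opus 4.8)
The plan is to prove this by induction on $n$, exploiting the recursive description $L_{n+1} = F_1 L_n$ together with the fact that $F_1 \colon \Mod-R_n \to \Mod-R_{n+1}$ is full and faithful (equivalently, that $r$ is left adjoint to $F_1$ and $rF_1 = \mathrm{id}$). For the base case $n = 0$ I note that $L_0 = V/\mfrak{m} = k$ as a right $V$-module, and every $V$-endomorphism of $V/\mfrak{m}$ is multiplication by a unique element of $V/\mfrak{m}$, so $\Hom_V(L_0, L_0) \cong V/\mfrak{m} = k$ is one-dimensional over $k$.

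For the inductive step I would assume $\dim \Hom(L_n, L_n) = 1$ and compute $\Hom_{R_{n+1}}(L_{n+1}, L_{n+1}) = \Hom_{R_{n+1}}(F_1 L_n, F_1 L_n) \cong \Hom_{R_n}(r F_1 L_n, L_n) = \Hom_{R_n}(L_n, L_n)$, which is one-dimensional by the inductive hypothesis. Concretely, one can read this off the triple description: $L_{n+1} = F_1 L_n$ is the triple $(\Hom(L_n, L_n), L_n, \mathrm{Id})$, so a homomorphism $L_{n+1} \to L_{n+1}$ is a pair $(f_0, f_1)$ with $f_1 \in \Hom_{R_n}(L_n, L_n)$ and $f_0$ a $k$-endomorphism of $\Hom(L_n, L_n)$, and commutativity of the defining square (whose vertical legs come from $\Gamma = \mathrm{Id}$) forces $f_0 = \Hom(L_n, f_1)$, i.e.\ post-composition with $f_1$; hence $(f_0, f_1) \mapsto f_1$ is the desired isomorphism onto $\Hom_{R_n}(L_n, L_n)$.

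The only point that needs a little care — and it is not a serious obstacle — is checking that this bijection is $k$-linear for the $k$-vector space structures on the two Hom-spaces, namely those induced by the left $k$-module structures on $L_n$ and on $L_{n+1} = F_1 L_n$. This follows immediately from the componentwise description of the $k$-action on the triple $F_1 L_n$, so the induction goes through and gives $\dim \Hom(L_n, L_n) = 1$ for all $n \in \N$.
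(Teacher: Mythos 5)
Your proof is correct and follows the same route as the paper: the paper's (one-line) argument is exactly that $\dim\Hom(L_0,L_0)=1$ and that $F_1$ is full and faithful, so the dimension is preserved along $L_{n+1}=F_1L_n$. Your additional unpacking via the adjunction $r\dashv F_1$ and the triple description is just a more detailed justification of that full faithfulness.
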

\begin{proof}
Since $\dim\Hom(L_0,L_0)=1$ and $F_1$ is full and faithful, $\dim\Hom(L_n,L_n)=1$ for all $n\in\N$.
\end{proof}

\begin{lemma}\label{AnforTn}
Let $M_0$ be a $k$-vector space, $v_1,\ldots,v_m$ a basis for $M_0$ and $\Gamma:M_0\rightarrow \Hom(L_n,T_n^l)$ be an injective $k$-vector space homomorphism. There exist $e_{1},\ldots e_m$ orthogonal idempotent endomorphism of $T_n^l$ such that $e_i\Gamma(v_i)=\Gamma(v_i)$ and $e_iT_n^l$ is indecomposable.

\end{lemma}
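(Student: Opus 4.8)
The plan is to reduce the statement to an elementary piece of linear algebra by computing the relevant Hom- and End-groups explicitly in the triple description. First I would observe that $T_n^l=(k^l,0,0)$, so a morphism $T_n^l\to T_n^l$ is just a $k$-linear map $k^l\to k^l$ (the $R_{n-1}$-component being forced to be $0$, and the compatibility square commuting trivially since the structure map of $T_n^l$ is $0$), giving $\End_{R_n}(T_n^l)\cong M_l(k)$. Similarly, since $L_n=F_1L_{n-1}=(\Hom(L_{n-1},L_{n-1}),L_{n-1},\mathrm{Id})$ with $\dim_k\Hom(L_{n-1},L_{n-1})=1$ by Lemma \ref{dimHomLnLn}, a morphism $L_n\to T_n^l=(k^l,0,0)$ is determined by an arbitrary $k$-linear map $k\to k^l$ (again the $R_{n-1}$-component is $0$ and the compatibility square commutes because $\Hom(L_{n-1},0)=0$), so $\Hom_{R_n}(L_n,T_n^l)\cong k^l$; and under these identifications the post-composition action of $\End_{R_n}(T_n^l)\cong M_l(k)$ on $\Hom_{R_n}(L_n,T_n^l)\cong k^l$ is the tautological matrix action. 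For $n=0$ the same conclusions hold with $L_0=T_0=V/\mfrak{m}$, using $\Hom_V(V/\mfrak{m},(V/\mfrak{m})^l)\cong k^l$ and $\End_V((V/\mfrak{m})^l)\cong M_l(k)$. Finally, for an idempotent $e\in M_l(k)$ one has $eT_n^l=(ek^l,0,0)\cong T_n^{\mathrm{rk}\,e}$, and since $\End_{R_n}(T_n)\cong k$ the module $T_n$ is indecomposable; hence $eT_n^l$ is indecomposable precisely when $e$ has rank one.

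With these identifications in place the lemma becomes: given the vectors $w_i:=\Gamma(v_i)\in k^l$ for $i=1,\dots,m$, which are linearly independent because $\Gamma$ is injective (so in particular $m\le l$), produce pairwise orthogonal rank-one idempotent matrices $e_1,\dots,e_m\in M_l(k)$ with $e_iw_i=w_i$. For this I would extend $w_1,\dots,w_m$ to a basis $w_1,\dots,w_l$ of $k^l$, take the dual basis $w_1^*,\dots,w_l^*$, and set $e_i:=w_i\otimes w_i^*$, i.e. the map $v\mapsto w_i^*(v)\,w_i$. A one-line check gives $e_i^2=e_i$ (since $w_i^*(w_i)=1$), $e_ie_j=0$ for $i\neq j$ (since $w_i^*(w_j)=0$ for $i\neq j$), and $e_iw_i=w_i$; each $e_i$ has rank one, so $e_iT_n^l\cong T_n$ is indecomposable. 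Transporting the $e_i$ back to $\End_{R_n}(T_n^l)$ then finishes the proof.

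There is essentially no obstacle here: the entire content is in the bookkeeping of the triple description and in noticing that $\Hom_{R_n}(L_n,T_n^l)$ carries the standard $M_l(k)$-module structure, after which the required idempotents are produced by splitting off complements to the chosen vectors. Remark \ref{twistingrepsbyauto} is not needed in this base case, but it is worth recording that the conclusion of $A_n$ depends only on the image of $\Gamma$ up to the $\End_{R_n}(T_n^l)$-action, which is exactly what is exploited above. The only point requiring a moment's care is that the compatibility square in the definition of a morphism of triples collapses for these particular modules, which it does because the target $T_n^l$ has zero $R_{n-1}$-part.
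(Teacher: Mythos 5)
Your proof is correct and follows essentially the same route as the paper's: both identify $\Hom(L_n,T_n^l)$ with $k^l$ via the one-dimensionality of $\Hom(L_{n-1},L_{n-1})$, deduce from the injectivity of $\Gamma$ that the images $\Gamma(v_i)$ correspond to linearly independent vectors, and then choose pairwise orthogonal rank-one idempotents of $k^l$ fixing those vectors. Your explicit dual-basis construction of the idempotents just makes precise a step the paper leaves implicit.
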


\begin{proof}
Let $(\alpha_1,0)=\Gamma(v_1),\ldots,(\alpha_m,0)=\Gamma(v_m)$ where each $\alpha_i$ is a $k$-linear map from $\Hom(L_{n-1},L_{n-1})$ to $k^l$. Since $\Gamma$ is injective and $\dim\Hom(L_{n-1},L_{n-1})=1$, it follows that $l\geq m$ and $\alpha_1(Id_{L_{n-1}}),\ldots,\alpha_m(Id_{L_{n-1}})\in k^l$ are linearly independent. Let $\epsilon_1,\ldots,\epsilon_m$ be the idempotent endomorphisms of $k^l$ such that $\epsilon_i\alpha_i(Id_{L_{n-1}})=\alpha_i(Id_{L_{n-1}})$ and $\dim\im\epsilon_i=1$. Let $e_1=(\epsilon_1,0),\ldots,e_m=(\epsilon_m,0)$. By definition, $e_1,\ldots,e_m$ have the required properties.
\end{proof}

\begin{proposition}
For all $n\geq 1$, $A_{n-1}$ and $B_{n}$ imply $A_n$.
\end{proposition}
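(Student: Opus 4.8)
The goal is to prove $A_n$ assuming $A_{n-1}$ and $B_n$. So I start with a finitely presented module $M \in \mod\textnormal{-}R_n$, a finite-dimensional $k$-vector space $M_0$, and an injective $k$-linear map $\Gamma\colon M_0 \to \Hom(L_n, M)$; I must produce a basis $v_1,\dots,v_n$ of $M_0$ and orthogonal idempotents $e_i$ of $M$ with $e_i\Gamma(v_i) = \Gamma(v_i)$ and $e_iM$ indecomposable. The first move is to apply $B_n$: decompose $M$ as a direct sum of indecomposables, which by $B_n$ are of the forms $F_1 N$, $F_0 N$ (with $N$ indecomposable finitely presented over $R_{n-1}$) and copies of $T_n$, and moreover every indecomposable summand $P$ satisfies $\dim\Hom(L_n, P) \leq 1$. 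So $\Hom(L_n, M)$ is a $k$-vector space of dimension at most the number of indecomposable summands of $M$, and it splits as a direct sum indexed by those summands, with each summand-slot contributing dimension $0$ or $1$.

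The plan is then to reduce to the $T_n$-case (handled by Lemma \ref{AnforTn}) together with a case analysis on summands of the form $F_0 N$ and $F_1 N$. By Lemma \ref{HomF1F0is0}, $\Hom(F_1 L_n, F_0 N) = 0$, so summands of the form $F_0 N$ contribute nothing to $\Hom(L_n, M)$ (here $L_n = F_1 L_{n-1}$, so a map out of $L_n$ into $F_0 N$ is a map out of $F_1 L_{n-1}$, which vanishes) — wait, more precisely I need $\Hom(F_1 L_{n-1}, F_0 N) = 0$, which is exactly Lemma \ref{HomF1F0is0} with $n$ replaced by $n-1$. Hence $\Gamma$ actually lands in $\Hom(L_n, M')$ where $M'$ is the sum of the $F_1$-type and $T_n$-type summands, and the $F_0$-type summands can be split off at the very end as extra orthogonal idempotents $e_i$ with $e_i\Gamma(v_i)$ automatically $0$ — except that we need $e_i\Gamma(v_i) = \Gamma(v_i)$, so these handle basis vectors $v_i$ with $\Gamma(v_i) = 0$; but $\Gamma$ is injective, so there are none, and the $F_0$-summands are simply irrelevant to the statement. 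So without loss of generality $M$ has only $F_1$-type and $T_n$-type summands.

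For the $F_1$-type part, the key point is that $F_1$ is full and faithful and $\Hom(L_n, F_1 N) = \Hom(F_1 L_{n-1}, F_1 N) \cong \Hom(L_{n-1}, N)$; grouping the $F_1$-summands, $F_1$ applied to the $R_{n-1}$-module $N_0 := \bigoplus N_j$ gives the $F_1$-part of $M$, and $\Gamma$ restricted there corresponds, under this natural isomorphism, to an injective $k$-linear map $M_0' \to \Hom(L_{n-1}, N_0)$. Now $A_{n-1}$ applies to $N_0$ (it is finitely presented over $R_{n-1}$, $M_0'$ is finite-dimensional, and the induced map is injective), yielding a basis of $M_0'$ and orthogonal idempotents of $N_0$ with the desired indecomposability; applying the functor $F_1$ (which preserves idempotents, orthogonality, and indecomposability since it is full and faithful) transports these to orthogonal idempotents of the $F_1$-part of $M$ satisfying $e_i\Gamma(v_i) = \Gamma(v_i)$. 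For the $T_n$-type part, Lemma \ref{AnforTn} directly provides the required basis and idempotents. Finally I assemble: $M_0 = M_0' \oplus M_0''$ according to how $\Gamma$ distributes over the $F_1$-part and $T_n$-part (using that $\Hom(L_n,-)$ is additive and that these two parts are the only contributors), take the union of the two bases and the (mutually orthogonal, since they live on different direct summands of $M$) idempotents, and pad with idempotents for any leftover indecomposable summands of $M$ not hit — but since $\Gamma$ is injective and $\dim M_0$ equals the total number of relevant summands, the count works out and no padding is needed beyond possibly projecting onto summands with trivial $\Hom$.

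The main obstacle I anticipate is bookkeeping with the distribution of $\Gamma$ across the summand decomposition: I need that an injective map into $\bigoplus_j \Hom(L_n, P_j)$ does not force the $v_i$'s to mix the $F_1$-part and the $T_n$-part in a way that obstructs choosing compatible idempotents. This is where I would use the freedom in choosing the basis: first choose a basis of $M_0$ adapted to the direct sum decomposition $\Hom(L_n, M) = \Hom(L_n, \text{$F_1$-part}) \oplus \Hom(L_n, \text{$T_n$-part})$ — concretely, pick $M_0' = \Gamma^{-1}$ of the first factor intersected appropriately, though since $\Gamma$ need not respect the splitting I may instead invoke Remark \ref{twistingrepsbyauto} to twist by an automorphism of $M$ that straightens $\Gamma$. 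Getting this reduction clean, so that the $A_{n-1}$ hypothesis and Lemma \ref{AnforTn} can be applied to genuinely separate pieces, is the crux; once $\Gamma$ is in "block form" the rest is routine functoriality of $F_1$ and assembling orthogonal idempotents living on distinct summands.
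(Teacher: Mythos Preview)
Your plan is correct and follows essentially the same route as the paper: decompose $M$ via $B_n$ as $F_1N\oplus F_0K\oplus T_n^l$, discard $F_0K$ using $\Hom(F_1L_{n-1},F_0K)=0$, then straighten $\Gamma$ into block form with an automorphism (Remark~\ref{twistingrepsbyauto}) so that $A_{n-1}$ handles the $F_1$-part through full faithfulness and Lemma~\ref{AnforTn} handles the $T_n^l$-part. The paper makes the straightening automorphism explicit---choosing a basis so that $\Gamma(v_1),\dots,\Gamma(v_m)$ have linearly independent $F_1N$-components while $\Gamma(v_{m+1}),\dots,\Gamma(v_n)$ land in $\Hom(L_n,T_n^l)$, then subtracting off the $T_n^l$-components of the first $m$ vectors via an automorphism $\alpha$ with $\alpha_1=\mathrm{Id}_N$---which is exactly the ``block form'' reduction you flagged as the crux.
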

\begin{proof}
Let $M_1$ be a finitely presented module over $R_n$, $M_0$ a $k$-vector space and $\Gamma:M_0\rightarrow \Hom(L_n,M_1)$ injective. By $B_n$, $M_1=F_1N\oplus F_0K\oplus T_n^l$ where $K,N\in\mod-R_{n-1}$ and $l\in\N_0$.

By \ref{HomF1F0is0}, $\Hom(F_1L_{n-1},F_0K)=0$, so we may as well assume $K=0$.

Let $v_1,\ldots,v_n$ be a basis for $M_0$ such that \[\Gamma(v_1)=(f_1,w_1),\ldots,\Gamma(v_m)=(f_m,w_m)\] and \[\Gamma(v_{m+1})=(0,w_{m+1}),\ldots,\Gamma(v_n)=(0,w_n)\] where $f_1,\ldots,f_m\in \Hom(L_n,F_1N)$ are linearly independent over $k$ and $w_1,\ldots,w_n\in\Hom(L_n,T_n^l)$.

Since $F_1$ is full, there exist $f_1^*,\ldots,f_m^*\in\Hom(L_{n-1},N)$ such that $F_1f_1^*=f_1,\ldots,F_1f_m^*=f_m$.

Let $w_1^*,\ldots,w_n^*$ be such that $w_i=(t_i,0)$ and $w_i^*=t_i(Id_{L_{n-1}})$. Note that since $\Hom(L_{n-1},L_{n-1})$ is $1$-dimensional, $t_i$ and hence $w_i$ is determined by $w_i^*=t_i(Id_{L_{n-1}})$.

Here is a diagram for $\Gamma(v_i)$:

\[\xymatrix@R=60pt@C=80pt{
  \Hom(L_{n-1},L_{n-1}) \ar[d]_{\substack{g\\\downmapsto\\(f^*_i\circ g,t_i(g))}} \ar[r]^{Id} & \Hom(L_{n-1},L_{n-1}) \ar[d]^{\substack{g\\\downmapsto\\f^*_i\circ g}} \\
  \Hom(L_{n-1},N)\oplus k^l \ar[r]^{(\text{ Id },0)} & \Hom(L_{n-1},N)   }\]

Let $\alpha=(\alpha_0,\alpha_1):F_1N\oplus T_n^l\rightarrow F_1N\oplus T_n^l$ be such that $\alpha_0\circ(f_i^*,0)=(f_i^*,-w_i^*)$ for $i=1,\ldots, m$, $\alpha_0\circ (0,w_i^*)=(0,w_i^*)$ for $i=m+1\ldots,n$ and $\alpha_1=\text{Id}_N$.

So $\alpha\circ \Gamma(v_1)=(f_1,0),\ldots \alpha\circ\Gamma(v_m)=(f_m,0)$ and $\alpha\circ\Gamma(v_{m+1})=(0,w_{m+1}),\ldots,\alpha\circ\Gamma(v_{n})=(0,w_{n})$. By \ref{twistingrepsbyauto}, we may replace $\Gamma$ by $\alpha\circ\Gamma$.

Let $M_0'$ be the span of $v_1,\ldots,v_m$. By definition of $M_0'$, if $u\in M_0'$ then $\Gamma(u)=(\Gamma_0(u),0)\in \Hom(L_n,F_1N)\oplus\Hom(L_n,T_n^l)= \Hom(L_n,F_1N\oplus T_n^l)$. Let $\Delta:M_0'\rightarrow \Hom(L_{n-1},N)$ be defined by setting $F_1\Delta(u)=\Gamma_0(u)$. By $A_{n-1}$, there exists $e_1,\ldots,e_m$ orthogonal idempotent endomorphisms of $N$ such that $e_i\Delta(v_i)=\Delta(v_i)$ and $e_iN$ is indecomposable for $1\leq i\leq m$. Thus $F_1(e_i)\Gamma(v_i)=\Gamma(v_i)$ and $F_1(e_i)F_1(N)=F_1(e_iN)$ which is indecomposable.

Let $M_0''$ be the span of $v_{m+1},\ldots,v_n$. By \ref{AnforTn}, there exist $e_{m+1},\ldots e_n$ orthogonal idempotent endomorphism of $T_n^l$ such that $e_i\Gamma(v_i)=\Gamma(v_i)$ and $e_iT_n^l$ is indecomposable.

So $\sigma_1=(e_1,0),\ldots,\sigma_m=(e_m,0)$ and $\sigma_{m+1}=(0,e_{m+1}),\ldots,\sigma_n=(0,e_n)$ are orthogonal idempotent endomorphisms of $F_1N\oplus T_n^l$ such that $\sigma_i\Gamma(v_i)=\Gamma(v_i)$ and $\sigma_i(F_1N\oplus T_n^l)$ is indecomposable.
\end{proof}

\begin{proposition}\label{AnandBnimplyBnplus1}
For all $n\in\N$, $B_n$ and $A_n$ imply $B_{n+1}$.
\end{proposition}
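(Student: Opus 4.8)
The plan is to feed the rough decomposition produced by Lemma~\ref{Andescfp} (which needs only $A_n$) through the dimension bound in $B_n$, read off the indecomposables, and then compute $\Hom(L_{n+1},-)$ on each of them.

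\textbf{Step 1 (the decomposition).} Given a finitely presented $R_{n+1}$-module $N$, Lemma~\ref{Andescfp} writes it as a direct sum of modules of the shapes $T_{n+1}=(k,0,0)$, $(0,M_1,0)=F_0M_1$ and $(k,M_1,\Gamma)$, with $M_1$ an indecomposable finitely presented $R_n$-module and $\Gamma\colon k\to\Hom(L_n,M_1)$ injective in the last case. For such a last summand I would invoke $B_n$ to get $\dim\Hom(L_n,M_1)\le 1$; since $\Gamma$ is injective this forces $\dim\Hom(L_n,M_1)=1$ and $\Gamma$ to be an isomorphism, and then the pair $(\Gamma,\mathrm{Id}_{M_1})$ is an isomorphism $(k,M_1,\Gamma)\to(\Hom(L_n,M_1),M_1,\mathrm{Id})=F_1M_1$. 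This already gives the first assertion of $B_{n+1}$: every finitely presented $R_{n+1}$-module is a direct sum of modules $T_{n+1}$, $F_0M$, $F_1M$ with $M$ indecomposable finitely presented over $R_n$.

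\textbf{Step 2 (the dimension bound).} Let $N$ be indecomposable finitely presented over $R_{n+1}$. By Step~1 it is a direct sum of nonzero modules of those three shapes, and indecomposability forces the sum to have a single term, so $N$ is isomorphic to $T_{n+1}$, to $F_0M$, or to $F_1M$ with $M$ indecomposable over $R_n$. I would then compute $\Hom(L_{n+1},-)=\Hom(F_1L_n,-)$ case by case: it is $0$ on $F_0M$ by Lemma~\ref{HomF1F0is0}; on $F_1M$ it is $\Hom_{R_n}(L_n,M)$ since $F_1$ is full and faithful, hence at most $1$-dimensional by $B_n$ because $M$ is indecomposable; and on $T_{n+1}$ a morphism $(\Hom(L_n,L_n),L_n,\mathrm{Id})\to(k,0,0)$ is just a $k$-linear map $\Hom(L_n,L_n)\to k$ (the second coordinate is forced to be $0$ and the compatibility square is automatic, both legs landing in $\Hom(L_n,0)=0$), so $\Hom(L_{n+1},T_{n+1})\cong\Hom_k(\Hom(L_n,L_n),k)$, which is $1$-dimensional by Lemma~\ref{dimHomLnLn}. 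Thus $\dim\Hom(L_{n+1},N)\le 1$ always, giving the second assertion of $B_{n+1}$.

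\textbf{Expected difficulty.} I do not expect a genuine obstacle here: beyond Lemma~\ref{Andescfp} the argument is bookkeeping. The only non-formal input is Step~1's use of $B_n$, where the bound $\dim\Hom(L_n,M_1)\le 1$ is exactly what collapses the space of ``$L_n$-structure maps'' on an indecomposable to a line and hence turns the amorphous summand $(k,M_1,\Gamma)$ into $F_1M_1$; everything else, in particular the vanishing $\Hom(F_1L_n,F_0M)=0$ and the full-and-faithful transfer $\Hom(F_1L_n,F_1M)\cong\Hom_{R_n}(L_n,M)$, is a direct appeal to results already in hand.
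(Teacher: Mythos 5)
Your proof is correct and follows essentially the same route as the paper: Lemma \ref{Andescfp}, identification of the summands $(k,M_1,\Gamma)$ with $F_1M_1$, then a case-by-case computation of $\dim\Hom(L_{n+1},-)$ on $F_0M$, $F_1M$ and $T_{n+1}$ using \ref{HomF1F0is0}, fullness and faithfulness of $F_1$, and \ref{dimHomLnLn}. The one difference is that where you invoke the second clause of $B_n$ directly to get $\dim\Hom(L_n,M_1)\le 1$ and hence $(k,M_1,\Gamma)\cong F_1M_1$, the paper instead runs a sub-case analysis on $M_1$ (as $F_0K$, $F_1N$ or $T_n$) via the first clause of $B_n$ together with an appeal to $B_{n-1}$; your shortcut is cleaner and, unlike the paper's, stays strictly within the stated hypotheses of the proposition.
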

\begin{proof}
By \ref{Andescfp}, $A_n$ implies that each finitely presented module over $R_{n+1}$ is a direct sum of modules of the form $T_{n+1}, (k,M_1,\Gamma)$ and $F_1M_1=(0,M_1,0)$ where $\Gamma$ is injective and $M_1$ is an indecomposable $R_n$-module. So in order to show that the first clause of $B_{n+1}$ is true, we need now consider modules of the form $(k,M_1,\Gamma)$.  By $B_n$, $M_1$ is either $F_1N$, $F_0K$ or $T_n$ where $N,K$ are indecomposable $R_{n-1}$-modules.

Since $\Hom(L_n,F_0K)=0$, if $M_1=F_0K$ then $\Gamma$ is not injective.

If $M_1=T_n$ then $(k,T_n,\Gamma)$ is isomorphic to $F_1T_n$ since $\dim_k\Hom(L_n,T_n)=1$ i.e. $(\Gamma,\text{Id}_{T_n}):(k,T_n,\Gamma)\rightarrow F_1T_n$ is an isomorphism.

Now suppose that $M_1=F_1N$ for $N$ an indecomposable $R_{n-1}$-module. Since $\Gamma:k\rightarrow \Hom(L_n,F_1N)$ is injective, $\dim\Hom(L_n,F_1N)\neq 0$. By $B_{n-1}$, $\dim\Hom(L_{n-1},N)\leq 1$. So $\dim\Hom(L_n,F_1N)=1$. Thus $(\Gamma,\text{Id}_{N}):(k,N,\Gamma)\rightarrow F_1N$ is an isomorphism.

It now remains to show that $\dim\Hom(L_{n+1},M)\leq 1$ for all indecomposable $M\in \mod-R_{n+1}$. If $M=F_0K$ then $\dim\Hom(L_{n+1},M)=0$. If $M=F_1N$ then $\dim\Hom(L_{n+1},M)=\dim\Hom(L_{n},N)\leq 1$ by $B_n$. If $M=T_{n+1}$ then $\dim\Hom(L_{n+1},M)\leq 1$ follows from \ref{dimHomLnLn} and the definition of $T_{n+1}$.

\end{proof}

We now consider the base cases, $A_0$ and $B_1$.

\begin{lemma}\label{basecaseA}
Let $V$ be a discrete valuation domain with maximal ideal $\mfrak{m}$, $M$ a finitely presented $V$-module and $M_0\subseteq\Hom(V/\mfrak{m},M)$ a $V/\mfrak{m}$-vector subspace. There exists  $v_1,\ldots,v_n$ a basis for $M_0$ as a $V/\mfrak{m}$-vector space and orthogonal idempotent endomorphisms of $M$ such that $e_iM$ is indecomposable and $e_iv_i=v_i$ for $1\leq i\leq n$.
\end{lemma}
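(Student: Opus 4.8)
The plan is to reduce everything to the Krull--Schmidt structure of $\mod-V$ and a linear-algebra argument inside the socle. Recall that a finitely presented $V$-module decomposes as $M \cong V^a \oplus \bigoplus_{j} V/\mfrak{m}^{l_j}$, and that $\Hom(V/\mfrak{m}, M)$ is exactly the socle $\soc(M)$: a homomorphism $V/\mfrak{m}\to M$ is determined by the image of $1$, which must be an element killed by $\mfrak{m}$. The free summands $V^a$ contribute nothing to the socle, so $\Hom(V/\mfrak{m},M) \cong \bigoplus_j \soc(V/\mfrak{m}^{l_j})$, and each $\soc(V/\mfrak{m}^{l_j})$ is a one-dimensional $V/\mfrak{m}$-vector space, spanned by the image of a generator multiplied by a uniformizer to the power $l_j - 1$. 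So $M_0$ sits inside a direct sum of one-dimensional pieces, one per indecomposable summand of $M$.

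Fix a Krull--Schmidt decomposition $M = \bigoplus_{i=1}^N M^{(i)}$ with each $M^{(i)}$ indecomposable finitely presented (so either $V$ or some $V/\mfrak{m}^{l}$), and let $\pi_i\colon M\to M^{(i)}$, $\iota_i\colon M^{(i)}\to M$ be the structure maps, $f_i = \iota_i\pi_i$ the associated orthogonal idempotents. Then $\soc(M) = \bigoplus_{i} \soc(M^{(i)})$ with $\soc(M^{(i)})$ of dimension $1$ if $M^{(i)}$ is a nonzero torsion module and $0$ if $M^{(i)} \cong V$. First I would discard the summands with trivial socle; reindex so that $M^{(1)},\dots,M^{(r)}$ are the torsion summands, and pick $s_i$ a generator of the one-dimensional space $\soc(M^{(i)})$, viewed inside $M$. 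Now $M_0 \subseteq \Span_{V/\mfrak{m}}(s_1,\dots,s_r)$, so $n := \dim M_0 \le r$. The key step is then a change-of-basis within the torsion summands: choose a $V/\mfrak{m}$-linear automorphism of $\Span(s_1,\dots,s_r)$ carrying a basis of $M_0$ to $s_1,\dots,s_n$, and realise it by an automorphism $\alpha$ of $M$ permuting/mixing the summands appropriately --- this is where one must be a little careful, since an arbitrary linear map on socles need not lift to an automorphism of $M$ when the $l_j$ differ. I would handle this by first grouping summands of equal length (within such a block any socle automorphism lifts, since $\End(V/\mfrak{m}^l)^{\,r}$ surjects onto $\End$ of the socle via reduction mod $\mfrak{m}$ and $\mathrm{Aut}$ surjects onto $\mathrm{GL}$), and then noting that a basis of the subspace $M_0$ can be chosen compatibly with this block decomposition --- i.e. after a preliminary automorphism we may assume each basis vector of $M_0$ lies in a single length-block, reducing to the equal-length case. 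Having arranged $v_i = s_i$ for $i = 1,\dots,n$, set $e_i := f_i$ (the idempotent projecting onto $M^{(i)}$); these are orthogonal idempotents with $e_i M = M^{(i)}$ indecomposable and $e_i v_i = e_i s_i = s_i = v_i$, as required. Finally, to return to the original $\Gamma$/basis, invoke Remark \ref{twistingrepsbyauto}: conjugating the $e_i$ by $\alpha^{-1}$ and pulling the basis back through $\alpha$ gives the statement for the original embedding.

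The main obstacle is the lifting step: ensuring that the linear algebra normalisation of $M_0$ inside the socle can be realised by a genuine automorphism of $M$. The clean way around it is the two-stage reduction just sketched --- first split $M$ (and hence its socle, and hence choose the basis of $M_0$) according to the lengths $l_j$ of the torsion summands, so that within each block all summands are isomorphic and every $\mathrm{GL}$-element on the socle lifts; the cross-block part of the normalisation is then only a permutation of basis vectors of $M_0$, which is harmless. Everything else is bookkeeping with Krull--Schmidt and the observation $\Hom(V/\mfrak{m},-) = \soc(-)$.
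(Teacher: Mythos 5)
Your route is genuinely different from the paper's, and it does work, but the step you yourself flag as the main obstacle is left as an assertion, and it is exactly where the content of the lemma sits. The paper avoids the normalisation problem entirely: after discarding the free summands it takes $W$ to be the submodule generated by the images of the maps in $M_0$ (which is just $M_0$ viewed inside the socle) and forms the relative divisible hull $L=\{m\in M \st 0\neq mp^l\in W \text{ for some } l\}$; because $V$ is an RD-ring, $L$ is pure in $M$, hence --- being of finite length and so pure-injective --- a direct summand, and the structure theorem applied to $L$ (whose socle is exactly $M_0$) hands over the $v_i$ and the $e_i$. What your approach buys is an explicit, purely linear-algebraic construction; what it costs is having to control the image of $\operatorname{Aut}(M)$ in $\operatorname{GL}(\operatorname{soc}(M))$.

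That control is not automatic and needs to be spelled out. A homomorphism $V/\mfrak{m}^{l}\rightarrow V/\mfrak{m}^{l'}$ is nonzero on socles only when $l'\geq l$ (every map to a strictly shorter summand kills the socle), so the image of $\operatorname{End}(M)$ in $\operatorname{End}_k(\operatorname{soc}(M))$ is only the block-triangular algebra attached to the length filtration: full diagonal blocks for summands of equal length, arbitrary entries from shorter to longer summands, zero the other way. Two facts then complete your argument. First, the kernel of $\operatorname{End}(M)\rightarrow\operatorname{End}_k(\operatorname{soc}(M))$ is contained in the radical of $\operatorname{End}(M)$, so every invertible block-triangular matrix lifts to an automorphism of $M$ (this subsumes your equal-length case). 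Second, every subspace of the socle is carried to a coordinate subspace by some such matrix: order the coordinates by increasing length, put a basis of $M_0$ in reduced echelon form for that order, and observe that each basis vector's non-pivot entries occupy positions of length at least that of its pivot, so clearing them uses only the allowed shorter-to-longer elementary operations. As written, your claim that the cross-block part of the normalisation is ``only a permutation'' glosses over precisely this directionality; with the echelon argument inserted, the proof is complete.
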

\begin{proof}

Since $V$ is a principal ideal domain, $M=M'\oplus V^m$ where $M'$ is a torsion module. Since $\Hom(V/\mfrak{m},V)=0$, we may replace $M$ by $M'$. Let $W= \{f(a)\in M \st a\in V/\mfrak{m} \text{ and } f\in M_0\}$. Note that $W$ is a submodule of $M$. Let $p\in V$ generate $\mfrak{m}$ and let $L=\{m\in M \st 0\neq mp^l\in W \text{ for some }l\in\N\}$.

Since $V$ is an RD-ring (see \cite[section 2.4.2]{PSL}), $L$ is pure in $M$. Since $M$ and hence $L$ is finite-length, $L$ is pure-injective. Therefore $L$ is a direct summand of $M$.

The lemma now follows from the structure theorem for finitely generated module over principle ideal domains.

%
%
%

\end{proof}

\begin{lemma}\label{basecaseB}
$B_1$ holds.
\end{lemma}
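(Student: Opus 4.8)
The plan is to deduce $B_1$ from Lemma~\ref{basecaseA} and from Lemma~\ref{Andescfp} applied at $n=0$; the statement $B_1$ splits into a decomposition claim for finitely presented $R_1$-modules and the bound $\dim\Hom(L_1,N)\le 1$ on indecomposables, and each half follows quickly once $A_0$ is in hand. First I would check that $A_0$ holds. Under the identification $\Hom(L_0,M)=\Hom_V(V/\mfrak m,M)$, an injective $k$-linear map $\Gamma\colon M_0\to\Hom(L_0,M)$ identifies $M_0$ with a $V/\mfrak m$-subspace of $\Hom_V(V/\mfrak m,M)$, and Lemma~\ref{basecaseA} hands back a basis of that subspace together with orthogonal idempotents $e_i\in\End M$ with $e_iM$ indecomposable and $e_i\circ v_i=v_i$; transporting along $\Gamma$ gives exactly the data $A_0$ asks for.

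Next I would apply Lemma~\ref{Andescfp} with $n=0$: every finitely presented $R_1$-module is a direct sum of copies of $T_1$, of modules $F_0M_1=(0,M_1,0)$, and of modules $(k,M_1,\Gamma)$ with $M_1$ indecomposable finitely presented over $V$ and $\Gamma$ injective. The indecomposable finitely presented $V$-modules are $V$ and the $V/\mfrak m^l$; since $\Hom_V(V/\mfrak m,V)=0$, injectivity of $\Gamma$ rules out $M_1\cong V$, so $M_1\cong V/\mfrak m^l$ for some $l\ge 1$, and then $\Hom(L_0,M_1)\cong\mfrak m^{l-1}/\mfrak m^l$ is one-dimensional, forcing $\Gamma$ to be an isomorphism. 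In that case $(\Gamma,\mathrm{Id}_{M_1})\colon(k,M_1,\Gamma)\to F_1M_1$ is an isomorphism (the defining square commutes trivially), which yields the first clause of $B_1$: every finitely presented $R_1$-module is a direct sum of modules of the form $F_0M$, $F_1M$ with $M$ indecomposable finitely presented over $V$, and $T_1$.

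For the bound, I would first note that $T_1$, $F_0M$ and $F_1M$ have local endomorphism rings (namely $k$, and $\End_V M$, using that $F_0,F_1$ are full and faithful), so by the Krull--Schmidt/Azumaya property every indecomposable finitely presented $R_1$-module is isomorphic to one of these three types; then I would compute $\Hom(L_1,-)$ on each. One has $\Hom(L_1,F_0M)=\Hom(F_1L_0,F_0M)=0$ by Lemma~\ref{HomF1F0is0}; one has $\Hom(L_1,F_1M)\cong\Hom(L_0,M)=\Hom_V(V/\mfrak m,M)$ since $F_1$ is full and faithful, and this is at most one-dimensional because $M$ is indecomposable finitely presented over $V$; and $\Hom(L_1,T_1)\cong\Hom_k(\Hom(L_0,L_0),k)$, because an $R_1$-morphism $L_1\to T_1$ has zero second component and the defining square then imposes nothing, so $\dim\Hom(L_1,T_1)=\dim\Hom(L_0,L_0)=1$ by Lemma~\ref{dimHomLnLn}. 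That finishes $B_1$.

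I do not expect a real obstacle: the base case is essentially bookkeeping with the triple description of $R_1$-modules on top of the structure theory of finitely presented modules over the discrete valuation domain $V$, already isolated in Lemma~\ref{basecaseA}. The only steps that need a moment of care are matching $A_0$ with Lemma~\ref{basecaseA} and confirming that $(k,M_1,\Gamma)$ becomes $F_1M_1$ once $\Gamma$ is identified as an isomorphism onto the one-dimensional space $\Hom(L_0,M_1)$.
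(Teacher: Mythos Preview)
Your proposal is correct and follows essentially the same route as the paper: use Lemma~\ref{basecaseA} (which is $A_0$) together with the argument of Lemma~\ref{Andescfp} to obtain the decomposition, identify the pieces $(k,M_1,\Gamma)$ with $F_1M_1$ via the one-dimensionality of $\Hom(L_0,M_1)$, and then verify the bound $\dim\Hom(L_1,N)\le 1$ case by case on $T_1$, $F_0M$, $F_1M$ exactly as in the proof of Proposition~\ref{AnandBnimplyBnplus1}. The only cosmetic difference is that the paper redoes the decomposition step by hand from Lemma~\ref{basecaseA} rather than invoking Lemma~\ref{Andescfp}, and defers the $\Hom(L_1,-)$ computation to a cross-reference; your version simply spells these out.
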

\begin{proof}
We need to show that all finitely presented modules over $R_1$ are direct sums of modules of the form $F_0M:=(0,M,0)$, $F_1M$ and $T_1:=(k,0,0)$ where $M$ is a finitely presented indecomposable module over $V$. Let $(M_0,M,\Gamma)$ be an arbitrary finitely presented module over $R_1$. As usual, we may assume $\Gamma$ is injective. Letting $e_1,...,e_n$ be as in \ref{basecaseA}, we have that
\[(M_0,M,\Gamma)\cong (0,(1-\sum_{i=1}^ne_i)M,0)\oplus \bigoplus_{i=1}^n(v_ik,e_iM,\Gamma|_{v_ik}). \] The module $(0,(1-\sum_{i=1}^ne_i)M,0)$ is equal to $F_0(1-\sum_{i=1}^ne_i)M$ and for each $1\leq i\leq n$, $(v_ik,e_iM,\Gamma|_{v_ik})$ is isomorphic to $F_1e_iM$.

That $\Hom(L_1, N)$ is $1$-dimensional is proved exactly as in the proof of \ref{AnandBnimplyBnplus1}.
\end{proof}

\begin{cor}
For all $n\geq 0$ and $m\geq 1$, $A_n$ and $B_m$ hold.
\end{cor}

\begin{cor}
All indecomposable finitely presented modules over $R_n$ have local endomorphism rings.
\end{cor}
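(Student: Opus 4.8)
The plan is to read the corollary off the classification in Theorem~\ref{descfpind} together with the full-and-faithfulness of the functors $F_0$ and $F_1$ recorded at the start of this section. Every indecomposable finitely presented $R_n$-module appearing in that classification is obtained by applying a word in $F_0$ and $F_1$ to one of a short list of ``building blocks'': an indecomposable finitely presented module over the discrete valuation domain $V$, or one of the modules $T_k=(k,0,0)$. So it is enough to check two things: that applying $F_0$'s and $F_1$'s does not change the endomorphism ring, and that each building block has local endomorphism ring.

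For the first point I would note that if $F\colon\mathcal A\to\mathcal B$ is a full and faithful additive functor and $X\in\mathcal A$, then $g\mapsto Fg$ is a ring isomorphism $\End_{\mathcal A}(X)\xrightarrow{\ \sim\ }\End_{\mathcal B}(FX)$; since a composite of full and faithful functors is again full and faithful, it follows that $\End_{R_n}(GX)\cong\End(X)$ for any word $G$ in $F_0,F_1$ for which $GX$ lies in $\Mod-R_n$. By Theorem~\ref{descfpind}, an indecomposable finitely presented $R_n$-module is isomorphic to $F_0^{m}F_1^{n-m}N$ with $N$ indecomposable finitely presented over $V$, or to $F_0^{n-k-l}F_1^{l}T_k$ with $k+l\le n$ (the case $k=n$ giving $T_n$ itself); in each case it is $GX$ for an admissible word $G$ and a building block $X$, so its endomorphism ring is $\End(X)$.

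It then remains to compute the endomorphism rings of the building blocks. An indecomposable finitely presented $V$-module is $V$ or $V/\mfrak m^{l}$ for some $l\ge 1$, with $\End_V(V)\cong V$ and $\End_V(V/\mfrak m^{l})\cong V/\mfrak m^{l}$; a discrete valuation domain and all of its proper quotients are local, so these are local. For $T_k=(k,0,0)$, inspecting the description of morphisms between triples shows $\End_{R_k}(T_k)\cong\End_k(k)=k$, which is a field and hence local. This completes the proof. There is no serious obstacle here; the only point requiring a little care is that one should \emph{not} try to argue via a Fitting-style ``finite length'' dichotomy, since the summands built from $N=V$ need not be of finite length over $R_n$ --- one genuinely uses the explicit identification of the endomorphism ring of each building block together with full-and-faithfulness.
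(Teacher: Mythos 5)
Your proof is correct and is essentially the argument the paper leaves implicit: the corollary is stated without proof precisely because it reads off from Theorem~\ref{descfpind} together with the fact that $F_0$ and $F_1$ are full and faithful (so composites of them induce ring isomorphisms on endomorphism rings), plus the locality of $\End_V(V)\cong V$, $\End_V(V/\mfrak m^l)\cong V/\mfrak m^l$ and $\End(T_k)\cong k$. Your closing caveat about not invoking a finite-length Fitting argument (because of the summands built from $N=V$) is a sensible observation and does not affect the correctness of the proof.
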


\section{Indecomposable pure-injectives and the Ziegler spectrum}\label{Indpis}
Throughout this section, let $V$ be a valuation domain with maximal idea $\mfrak{m}$.

\begin{lemma}
All finitely presented indecomposable right $R_{n+1}$-modules are pure-injective except for $F_0^{n+1}V$. The module $F_0^{n+1}V$ is pure-injective if and only if $V$ is pure-injective as a right module over itself.
\end{lemma}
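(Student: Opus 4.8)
The structure theorem \ref{descfpind} tells us that the indecomposable finitely presented right $R_{n+1}$-modules are $T_{n+1}$, the modules $F_0^mF_1^{n+1-m}N$ for $0\le m\le n+1$ and $N$ indecomposable finitely presented over $V$, and the modules $F_0^{n+1-k-l}F_1^lT_k$ with $k+l\le n+1$. The plan is to go through these families and, using the fact (Preliminaries, item (2) on interpretation functors) that interpretation functors preserve pure-injectivity, reduce pure-injectivity of each module to pure-injectivity of a module over $V$ or over $k$. Recall that $F_0$, $F_1$ and the forgetful functor $r$ are all interpretation functors, and that $F_1$ is full and faithful with $r$ as a right (resp. left) adjoint.

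First I would handle the finite-length modules: $T_{n+1}=(k,0,0)$ is a finite-dimensional $k$-vector space, hence of finite length over $R_{n+1}$, hence pure-injective; similarly $V/\mfrak{m}^l$ is finite-length over $V$, so every $F_0^mF_1^{n+1-m}(V/\mfrak{m}^l)$ is finite-length over $R_{n+1}$ (applying $F_0$ or $F_1$ to a finite-length module gives a finite-length module, as one sees from the triple description), hence pure-injective; and each $F_0^{n+1-k-l}F_1^lT_k$ is likewise finite-length and pure-injective. Alternatively, and more uniformly, one can invoke that over an artinian-type situation finite-length modules are always pure-injective, but since $V$ itself need not be noetherian here I would argue directly via finite length over $R_{n+1}$. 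This disposes of everything except the modules involving the non-finite-length $V$-module $V$ itself, i.e. the modules $F_0^mF_1^{n+1-m}V$ for $0\le m\le n+1$.

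Next, for $1\le m\le n+1$ I would use Lemma \ref{HomF1F0is0}, which gives $F_1F_0K\cong F_0F_0K$ for any $K$, to rewrite $F_0^mF_1^{n+1-m}V\cong F_0^{m}F_0^{n+1-m}V=F_0^{n+1}V$ whenever at least one $F_0$ sits to the left of all the $F_1$'s — but actually what I want is the case $m\ge 1$, so I should instead argue: if $m\ge 1$, then moving an $F_0$ past the $F_1$'s using $F_1F_0\cong F_0F_0$ repeatedly shows $F_0^mF_1^{n+1-m}V\cong F_0^{n+1}V$; so all of these coincide with $F_0^{n+1}V$. For $m=0$, the module is $F_1^{n+1}V$, and since $F_1$ is an interpretation functor preserving pure-injectivity and $V$ is always... no: $V$ is pure-injective over itself iff $V$ is complete, so this is not automatic either. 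Here I would instead note that $F_1$ is full and faithful and that for a full and faithful interpretation functor with the forgetful functor as adjoint, $F_1M$ is pure-injective whenever $M$ is; but we need the converse direction carefully. The cleanest route: $r F_1 = \mathrm{Id}$, $r$ is exact and preserves purity, and $F_1$ preserves pure-injectivity; combined with $rF_1 N = N$ this shows $F_1^{n+1}V$ is pure-injective iff $V$ is. Wait — I must be honest about which direction needs which hypothesis: $V$ pure-injective $\Rightarrow F_1^{n+1}V$ pure-injective is immediate from preservation; for the converse, if $F_1^{n+1}V$ is pure-injective then applying $r$ enough times, together with the fact that $r$ sends pure-injectives to pure-injectives when restricted appropriately (or using that $F_1$ reflects purity because it is full and faithful and exact), gives $V$ pure-injective. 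So $F_1^{n+1}V$ — and hence, combined with the previous paragraph, every $F_0^mF_1^{n+1-m}V$ with any arrangement — is pure-injective exactly when $V$ is pure-injective over itself; and for $n+1\ge 1$ the module $F_0^{n+1}V$ is the remaining exceptional one.

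The main obstacle I anticipate is getting the two directions of the statement about $F_0^{n+1}V$ exactly right: the forward implication (interpretation functors preserve pure-injectivity) is clean, but the converse — that pure-injectivity of $F_0^{n+1}V$ forces $V$ to be pure-injective — requires producing, from a finite-matching system of equations over $V$ without a solution in $V$, a corresponding system over $R_{n+1}$ without a solution in $F_0^{n+1}V$. Since $F_0$ is full and faithful and $r F_0 = \mathrm{Id}$ with $r$ exact and purity-preserving, the natural move is: $F_0$ reflects purity of embeddings, so a non-split pure embedding $V\hookrightarrow \widehat{V}$ witnessing non-pure-injectivity of $V$ yields $F_0^{n+1}V \hookrightarrow F_0^{n+1}\widehat V$ which is still pure and, because $F_0$ is full and faithful, still non-split; hence $F_0^{n+1}V$ is not pure-injective. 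Conversely if $V$ is pure-injective then so is $F_0^{n+1}V$. I would write this reflection-of-purity argument carefully, as it is the one genuinely non-formal point; everything else is bookkeeping with the triple description and the already-established classification \ref{descfpind}.
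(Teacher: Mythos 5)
Your overall strategy is essentially the paper's: the paper's proof is a one-line appeal to the classification \ref{descfpind}, to the fact that $F_0$ and $F_1$ are interpretation functors and hence preserve pure-injectivity, and to the pure-injectivity of the finite-length module $(k,0,0)$. Your extra care over the two directions of the ``if and only if'' for $F_0^{n+1}V$ is the right instinct, and the cleanest version is the one you sketch with the forgetful functor: $r$ commutes with direct limits and products, hence preserves pure-injectivity, and $rF_0=rF_1=\mathrm{Id}$, so $r^{n+1}(F_0^{n+1}V)=V$ is pure-injective whenever $F_0^{n+1}V$ is; the converse is preservation under $F_0$. Your treatment of the finite-length modules in the list is also fine.

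There is, however, one step that fails as written. You reduce $F_0^mF_1^{n+1-m}V$ to $F_0^{n+1}V$ for $m\ge 1$ by ``moving an $F_0$ past the $F_1$'s using $F_1F_0\cong F_0F_0$''. But in the composite $F_0^mF_1^{n+1-m}$ every $F_1$ is applied \emph{before} (to the right of) every $F_0$, so there is no occurrence of $F_1F_0$ for Lemma \ref{HomF1F0is0} to act on; the same problem leaves $F_1^{n+1}V$ unaccounted for. The missing ingredient is that $\Hom_V(V/\mfrak{m},V)=0$ because $V$ is a domain, whence $F_1V=(\Hom(L_0,V),V,\mathrm{Id})=(0,V,0)=F_0V$; only after this does \ref{HomF1F0is0} take over to give $F_1^{j}V\cong F_0^{j}V$ and hence $F_0^mF_1^{n+1-m}V\cong F_0^{n+1}V$ for every $m$. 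This identification is not cosmetic: without it you have only shown that each of several a priori distinct indecomposables is pure-injective precisely when $V$ is, whereas the lemma asserts that there is exactly \emph{one} exceptional indecomposable, namely $F_0^{n+1}V$. (This is the same observation the paper records later, in the proof of \ref{indoveronepointextensions}, for $\widehat V$ and $Q(V)$.) With that one fix your argument is complete and, on the ``only if'' direction for $F_0^{n+1}V$, more explicit than the paper's.
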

\begin{proof}
This follows directly from \ref{descfpind}, the fact that the functors $F_0$ and $F_1$ preserve pure-injectivity and that $(k,0,0)$ is finite-length and hence pure-injective.
\end{proof}

\begin{proposition}\label{piinimF0F1plusT}
Every indecomposable pure-injective module over $R_{n+1}$ is of the form $F_0N$, $F_1N$ or $T_n:=(k,0,0)$ for some indecomposable pure-injective $R_n$-module $N$.
\end{proposition}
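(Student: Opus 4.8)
The plan is to combine the fact that every indecomposable pure-injective lies in the Ziegler closure of the finitely presented indecomposables with the structure theorem \ref{descfpind} together with the interpretation-functor machinery of the Preliminaries. First I would recall that $R_{n+1}=R_n[L_n]$, and that the three functors $F_0,F_1:\Mod\text{-}R_n\to\Mod\text{-}R_{n+1}$ and $r:\Mod\text{-}R_{n+1}\to\Mod\text{-}R_n$ are interpretation functors. Next I would set up the key dichotomy on an indecomposable pure-injective $M=(M_0,M_1,\Gamma_M)$ over $R_{n+1}$ by looking at the adjunction $\Hom_{R_{n+1}}(F_0 N, M)\cong\Hom_{R_n}(N, rM)=\Hom_{R_n}(N,M_1)$ and the dual statement $\Hom_{R_{n+1}}(M, F_1 N)\cong\Hom_{R_n}(M_1,N)$: these show that $F_0$ and $F_1$ are full and faithful, and that $F_0(\Mod\text{-}R_n)$ and $F_1(\Mod\text{-}R_n)$ are, after closing under summands, the definable subcategories cut out by $M_0=0$ and by $\Gamma_M$ an isomorphism (equivalently $M_1$ and $\Hom(L_n,M_1)$ identified) respectively. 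The quotient-type module $T_n=(k,0,0)$ is the only remaining indecomposable finitely presented module not of the form $F_0 N$ or $F_1 N$.

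The main step is then: given an arbitrary indecomposable pure-injective $M$ over $R_{n+1}$, I would argue it lies in one of three definable subcategories. By the topology on $\Zg_{R_{n+1}}$, $M$ lies in the closure of $\{$indecomposable f.p. modules$\}=\{F_0 N, F_1 N : N\in\ind(\mod\text{-}R_n)\}\cup\{T_n\}$. Using fact (5) about interpretation functors, the closure of $\{F_0 N\}$ (resp.\ $\{F_1 N\}$) under direct summands and isomorphism is a definable subcategory $\mcal{D}_0$ (resp.\ $\mcal{D}_1$), and $\{T_n\}$ generates a definable subcategory $\mcal{D}_T$ which is contained in $F_1(\Mod\text{-}R_n)$'s closure since $T_n=(k,0,0)\cong(k,T_n,\mathrm{Id})$... — more carefully, $T_n$ embeds purely into nothing new here, but the cleanest route is: the union of the three definable subcategories $F_0(\Mod\text{-}R_n)^{\mathrm{ds}}$, $F_1(\Mod\text{-}R_n)^{\mathrm{ds}}$ and $\mcal{D}_T$ is closed under the closure operation on indecomposables, because every indecomposable f.p. module is in one of them and a definable subcategory containing all f.p. indecomposables is everything. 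Concretely I would show: if $M\notin F_0(\Mod\text{-}R_n)^{\mathrm{ds}}$ and $M\notin F_1(\Mod\text{-}R_n)^{\mathrm{ds}}$ then $M\in\mcal{D}_T$, hence $rM=0$, hence $M=(M_0,0,0)$ with $M_0$ an indecomposable pure-injective $k$-vector space, i.e.\ $M\cong T_n$ (here $k$ is a field so its only indecomposable module is $k$ itself). In the remaining cases one applies fullness and faithfulness of $F_0,F_1$: a pure-injective $M\cong F_0 N$ forces $N=rM$ pure-injective over $R_n$ (an interpretation functor with a two-sided adjoint reflects pure-injectivity of the preimage, or more simply $\End_{R_{n+1}}(F_0 N)\cong\End_{R_n}(N)$ is local and $N$ is a summand of $rM$... ), and similarly for $F_1$; indecomposability of $M$ transfers to $N$ via faithfulness.

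I expect the main obstacle to be the bookkeeping that makes "$M$ lies in one of three definable subcategories" rigorous: one must verify that the three candidate definable subcategories genuinely cover all f.p. indecomposables (immediate from \ref{descfpind} once one knows $T_n$ is the only exception), and then invoke \cite[5.1.4]{PSL} / the correspondence between closed subsets of $\Zg_{R_{n+1}}$ and definable subcategories to conclude the three closed sets cover $\Zg_{R_{n+1}}$. A secondary subtlety is deducing, from $M\in F_i(\Mod\text{-}R_n)^{\mathrm{ds}}$ and $M$ indecomposable, that $M\cong F_i N$ with $N$ indecomposable pure-injective over $R_n$: here I would use that $F_i$ restricted to pure-injectives is a fully faithful functor onto a full subcategory of pure-injectives closed under summands (fact (2) plus the adjunction computing $\End$), so $N:=rM$ (resp.\ the appropriate adjoint applied to $M$) is pure-injective by interpretation-functor fact (2)/(3) applied to $r$, and indecomposable because $\End_{R_n}(N)\cong\End_{R_{n+1}}(F_i N)=\End_{R_{n+1}}(M)$ is local.
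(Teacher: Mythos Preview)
Your approach is correct and essentially the same as the paper's: both argue that $\mathrm{Add}(\im F_0)$, $\mathrm{Add}(\im F_1)$, and the closed point $(k,0,0)$ together cover $\Zg_{R_{n+1}}$ because their union already contains every finitely presented indecomposable (via Theorem~\ref{descfpind}). Your definable-subcategory phrasing sidesteps the paper's small detour through $F_0^{n+1}\widehat{V}$ (needed there because $F_0^{n+1}V$ may not be pure-injective, hence not literally a point of $\Zg_{R_{n+1}}$), and you usefully spell out the step the paper leaves implicit, namely that an indecomposable pure-injective $M$ in $\mathrm{Add}(\im F_i)$ must be $F_iN$ with $N$ indecomposable pure-injective, using full faithfulness of $F_i$ and that $r$ preserves pure-injectivity.
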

\begin{proof}
Since all finitely presented indecomposable modules over $R_{n+1}$ are pure-injective except $F_0^{n+1}V$, the set of finitely presented pure-injective modules together with $F_0^{n+1}\widehat{V}$ is dense in $\Zg_{R_{n+1}}$. Since $F_0$ and $F_1$ commute with direct limits and products, the images of $F_0$ and $F_1$ are definable subcategories of $\Zg_{R_{n+1}}$ after closing under direct summands (see \cite[3.8]{Abcatsanddefaddcats}). Thus $\text{Add}(\im F_0)\cap\Zg_{R_{n+1}}=:\mcal{C}_0$ and $\text{Add}(\im F_1)\cap\Zg_{R_{n+1}}=:\mcal{C}_1$ are closed subsets of $\Zg_{R_{n+1}}$. The point $(k,0,0)$ is a closed point of $\Zg_{R_{n+1}}$. Since $\mcal{C}_1\cup\mcal{C}_2\cup\{(k,0,0)\}$ is a dense closed subset of $\Zg_{R_{n+1}}$ it is all of $\Zg_{R_{n+1}}$. Thus all indecomposable pure-injecitve $R_{n+1}$-modules are of the required form.
\end{proof}

\begin{theorem}\label{indoveronepointextensions} The indecomposable pure-injective modules over $R_n$ are
\begin{enumerate}[(i)]
\item $F_0^pF_1^lN$ where $N=V/\mfrak{m}^m$ or $N=E(V/\mfrak{m})$ and $p+l=n$
\item $F_0^nN$ where $N=\widehat{V}$ or $N=Q(V)$
\item $F_0^pF_1^lT_m$ where $p+l+m=n$
\end{enumerate}
There is no redundancy in this list.

Further, the modules in (i), (iii) and $F_0^nQ(V)$ are $\Sigma$-pure-injective, $F_0^nQ(V)$ is finite-length over its endomorphism ring and $F_0^n\widehat{V}$ has acc on pp-definable subsets.
\end{theorem}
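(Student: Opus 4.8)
The plan is to establish the classification, its non-redundancy, and the supplementary properties by induction on $n$, with Proposition \ref{piinimF0F1plusT} as the engine. The case $n=0$ is $R_0=V$, which is the classification of $\Zg_V$ recalled in the Preliminaries (note that $T_0$ is just $V/\mfrak m=V/\mfrak m^1$, so it already lies inside (i); accordingly the genuinely new entries of type (iii) are those with $m\geq1$). For the step, Proposition \ref{piinimF0F1plusT} says every indecomposable pure-injective over $R_{n+1}$ is $F_0N$ or $F_1N$ with $N$ on the $R_n$-list, or else $T_{n+1}=(k,0,0)$, which is (iii) with $p=l=0$ and $m=n+1$. So I must check that $F_0$ and $F_1$ carry each entry of the $R_n$-list into the $R_{n+1}$-list. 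Applying $F_0$ just raises the outer $F_0$-exponent by one. For $F_1$ one invokes Lemma \ref{HomF1F0is0} in the form $F_1F_0\cong F_0F_0$: on any entry that already carries a leading $F_0$, $F_1$ behaves like $F_0$; on $F_1^lN$ or $F_1^lT_m$ it simply prepends an $F_1$; and on $\widehat V$ or $Q(V)$, where $\Hom(L_0,-)=0$, it again behaves like $F_0$ (so $F_0^pF_1^l\widehat V\cong F_0^n\widehat V$, which is why (ii) records only $F_0^n$). In every case we land back in the list, proving completeness.

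For non-redundancy I would isolate three functorial invariants. First, the iterated forgetful functor $r^n\colon\Mod-R_n\to\Mod-R_0=\Mod-V$ (with $rF_0=rF_1=\mathrm{id}$, since $r(0,M,0)=M=r(\Hom(L,M),M,\mathrm{id})$) returns the base $V$-module $N$ on entries of type (i) and (ii), while it annihilates every entry of type (iii) with $m\geq1$ because $rT_m=0$ for such $m$; this separates $\{(\mathrm i),(\mathrm{ii})\}$ from $(\mathrm{iii})$ and distinguishes entries of (i)--(ii) with non-isomorphic $N$. Second, full-faithfulness of $F_0$ and $F_1$ allows cancellation of any common string of outer $F_0$'s, reducing every remaining comparison to the case of no common leading $F_0$. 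Third, the vertex functor $(M_0,M_1,\Gamma)\mapsto M_0$ reads off, level by level, whether the outermost functor applied was $F_0$ (then $M_0=0$) or $F_1$ (then $M_0=\Hom(L,-)$, which is nonzero --- indeed $1$-dimensional --- for exactly the bases $V/\mfrak m^m$, $E(V/\mfrak m)$ and $T_m$ with $m\geq1$, using $\Hom(L_n,F_1N)\cong\Hom(L_{n-1},N)$ by full-faithfulness and Lemma \ref{dimHomLnLn}); this recovers the pair $(p,l)$. Together these show that each of (i), (ii), (iii) has no internal repetition and that the three families are pairwise disjoint.

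For the supplementary assertions I would push everything through the interpretation functors $F_0,F_1$. Over $V$ the modules $V/\mfrak m^m$ (finite length), $E(V/\mfrak m)$ and $Q(V)$ are $\Sigma$-pure-injective: the pp-$1$-lattice of $Q(V)$ is $\{0,Q(V)\}$ and that of $E(V/\mfrak m)$ is the well-ordered chain $0<C_1<C_2<\cdots<E(V/\mfrak m)$, so neither admits an infinite descending chain of pp-definable subgroups; and each $T_m$ is a simple $R_m$-module, hence $\Sigma$-pure-injective. Since interpretation functors preserve $\Sigma$-pure-injectivity, all entries of (i), (iii) and $F_0^nQ(V)$ are $\Sigma$-pure-injective. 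Similarly $\widehat V$ has acc on pp-definable subgroups --- its pp-$1$-lattice is $\widehat V\supsetneq p\widehat V\supsetneq p^2\widehat V\supsetneq\cdots\supsetneq 0$, which satisfies acc but not dcc --- and interpretation functors preserve acc on pp-definable subgroups, so $F_0^n\widehat V$ does too. Lastly, by full-faithfulness $\End_{R_n}(F_0^nQ(V))\cong\End_V(Q(V))=Q(V)$, and since $F_0^nQ(V)$ is just $Q(V)$ with $R_n$ acting through the projection $R_n\to V$, it is one-dimensional over its endomorphism ring, hence of finite endolength.

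The step I expect to be most delicate is the non-redundancy, precisely because of the collapses $F_1F_0\cong F_0F_0$ and $F_1\widehat V\cong F_0\widehat V$ (both instances of the fact that $F_1X\cong F_0X$ if and only if $\Hom(L,X)=0$): the ``normal form'' $F_0^pF_1^l(\text{base})$ is a genuine normal form only once one has pinned down which bases have $\Hom(L,-)\neq0$, and carrying that distinction --- together with the degenerate identification $T_0\cong V/\mfrak m$ --- cleanly through the induction is the real work; the $\Sigma$-pure-injectivity and acc statements, by contrast, are immediate once the base cases over $V$ are in hand.
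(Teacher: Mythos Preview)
Your proposal is correct and follows essentially the same route as the paper: completeness via Proposition~\ref{piinimF0F1plusT} together with the collapses $F_1F_0\cong F_0F_0$ and $F_1\widehat V\cong F_0\widehat V$, $F_1Q(V)\cong F_0Q(V)$; non-redundancy by applying the iterated forgetful functor $r$ to recover the base $V$-module and then using that the vertex component $M_0$ of $F_1^{\,l}N$ is $\Hom(L_{l-1},F_1^{\,l-1}N)\neq 0$ while that of $F_0(-)$ is zero; and the supplementary properties by pushing the known base-case facts about $V/\mfrak m^m$, $E(V/\mfrak m)$, $Q(V)$, $\widehat V$ through the interpretation functors $F_0,F_1$. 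You in fact supply more detail than the paper does on the supplementary assertions and on the non-redundancy within (iii), which the paper leaves to the reader.
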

\begin{proof}
By \ref{piinimF0F1plusT} and the fact that $F_1F_0\cong F_0^2$, the indecomposable pure-injectives over $R_n$ are
\begin{enumerate}[(i)]
\item $F_0^pF_1^lN$ where $N=V/\mfrak{m}^m$, $N=E(V/\mfrak{m})$, $N=\widehat{V}$ or $N=Q(V)$ and $p+l=n$
\item $F_0^pF_1^lT_m$ where $p+l+m=n$.
\end{enumerate}

Since $\Hom(V/\mfrak{m},\widehat{V})=0$ and $\Hom(V/\mfrak{m},Q(V))=0$, $F_1\widehat{V}\cong F_0\widehat{V}$ and $F_1Q(V)\cong F_0Q(V)$. So by \ref{HomF1F0is0}, $F_0^pF_1^l\widehat{V}\cong F_0^n\widehat{V}$ and  $F_0^pF_1^lQ(V)\cong F_0^nQ(V)$ when $p+l=n$.

It remains to show that there is no redundancy in the list of pure-injectives given in the statement of the theorem. By applying the functor $r$ $n$-times, we see that if $N,M$ are non-isomorphic then $F_0^pF_1^{n-p}N$ is not isomorphic to $F_0^lF_1^{n-l}M$. If we apply the functor $r$ $n$-times to a module in $(iii)$ then we get the zero module. Thus the $3$ points of the list are pairwise disjoint and $F_0^n\widehat{V}$ is not isomorphic to $F_0^nQ(V)$.

Now suppose that $F_0^pF_1^{n-p}N\cong F_0^lF_1^{n-l}N$ and $p<l$. Then $F_1^{n-p}N\cong F_0^{l-p}F_1^{n-l}N$. In order to show that these two modules are not isomorphic, it is enough to show that for $N$ from $(i)$ and $m\geq 1$, $\Hom(L_{m-1},F_1^{m-1}N)\neq 0$. This is true since $F_1$ is full and faithful and $\Hom(L_0,N)\neq 0$.

We leave showing that there is no redundancy in $(iii)$ to the reader.

\end{proof}

\begin{remark}
If $N$ is an indecomposable pure-injective over $R_n$ then its lattice of pp-definable subgroups has m-dimension 1.
\end{remark}

\begin{remark}
The functors $F_0$ and $F_1$ induce homeomorphisms from $\Zg_{R_n}$ to $\Zg_{R_{n+1}}\cap\im F_0$ and $\Zg_{R_{n+1}}\cap\im F_1$ respectively. Since both functors are full and faithful, this follows from \cite[6.1]{Krausegenmods}.
\end{remark}

\begin{remark}
A set $X$ is closed if $X\cap\im F_0$ and $X\cap\im F_1$ is closed. Thus we can understand the Ziegler topology inductively. In particular $F_0^nQ$ is a closed point and $\{N,F_0^nQ\}$ is the closure of $N$ for any non-finite-length indecomposable pure-injective $N$.
\end{remark}

\begin{remark}
The m-dimension of $\pp_{R_n}^1$ is $2$. The functor $F_1\oplus F_0:\Mod-R_{n}\rightarrow \Mod-R_{n+1}$ is such that $\langle (F_1\oplus F_0)\Mod-R_{n}, (k,0,0)\rangle=\Mod-R_{n+1}$. Any pp-pair which is finite-length with respect to $\langle F_1\oplus F_0\Mod-R_{n}\rangle$ is finite-length with respect to $\langle (F_1\oplus F_0)\Mod-R_{n}, (k,0,0)\rangle$. Thus, the m-dimension of $\pp^1_{R_n}$ is equal to the m-dimension of $\pp^1_{R_{n+1}}$ for all $n\in\N_0$.
\end{remark}

\section{Ziegler closures of generalised ray tubes}

Throughout this section we will take $R_0:=k[[x]]$ and $\mcal{A}$ a finite-dimensional $k$-algebra.

Following \cite{Krausegenmods}, a generalised (stable) tube is a sequence of tuples $\mcal{T}:=(M_i,\phi_i,\psi_i)_{i\in\N_0}$ where each $M_i$ is an $\mcal{A}$-module and $\phi_i:M_{i+1}\rightarrow M_i$ and $\psi_i:M_i\rightarrow M_{i+1}$ are $\mcal{A}$-homomorphisms such that $M_0=0$ and for every $i\in\N$,
\[\xymatrix@=20pt{
  M_i \ar[d]_{\psi_i} \ar[r]^{\phi_{i-1}} & M_{i-1} \ar[d]^{\psi_{i-1}} \\
  M_{i+1} \ar[r]^{\phi_i} & M_i   }\] is a pull back and a push out.

A generalised ray tube $((M_i,\phi_i,\psi_i)_{i\in\N_0},(P^i,\alpha_i)_{i=1}^n)$ is a generalised tube $\mcal{T}=(M_i,\phi_i,\psi_i)_{i\in\N_0}$ together with a finite sequence of $\mcal{A}$-modules $P^1,P^2,\ldots,P^n$ and embeddings $\alpha^1:M_1\rightarrow P^1$, $\alpha^2:P^1\rightarrow P^2$,\ldots,$\alpha^n:P^{n-1}\rightarrow P^n$.

To each generalised ray tube we attach a set
\[\{P^i_j\st 1\leq i\leq n \text{ and } j\geq 1\}\cup\{M_i \st i\geq 1\}\]
 of finitely presented modules, by induction, which we will say are \textbf{in the tube}.

For each $1\leq i\leq n$, let $P_1^i=P^i$ and $\alpha_1^i=\alpha^i$. For each $j\geq 1$, let $P^1_j$, $\alpha^1_j$ and $\overline{\psi^1_j}$ be such that
\[\xymatrix@=20pt{
  M_j \ar@{^{(}->}[d]_{\alpha^1_{j}} \ar@{^{(}->}[r]^{\psi_j} & M_{j+1} \ar[d]^{\alpha^1_{j+1}}  \\
  P_j^1 \ar[r]_{\overline{\psi_j^1}} &  P_{j+1}^1             }\] is a pushout.

Since $\alpha^1$ and $\psi^1$ are embeddings, the pushout with $j=1$ is also a pullback. Moreover, $\alpha_2^1$ and $\overline{\psi_1^1}$ are embeddings. By induction, it follows that for all $j\geq 1$, the above pushout is also a pullback and that $\alpha_{j+1}^1$ and $\overline{\psi_j^1}$ are embeddings.

For each $1<i\leq n$ and $1\leq j$ let $P^i_j$, $\alpha^i_j$ and $\overline{\psi_i^j}$ be such that
\[\xymatrix@=20pt{
  P_j^{i-1} \ar@{^{(}->}[d]_{\alpha^i_{j}} \ar@{^{(}->}[r]^{\overline{\psi_j^{i-1}}} & P_{j+1}^{i-1} \ar[d]^{\alpha^i_{j+1}}  \\
  P_j^i \ar[r]_{\overline{\psi_j^i}} &  P_{j+1}^i             }\] is a pushout.

Note that the above pushouts are also pullbacks, and the morphisms $\alpha_j^i$ and $\overline{\psi^i_j}$ are embeddings.

\tiny
\[\xymatrix@R=18pt@C=12pt{
& & M_3 \ar[dr]_{\alpha_3^1} & & P_4^1 \ar[dr]_{\alpha^2_4} & & P^2_5 \ar[dr]_{f_4}& & M_5 \\
& M_2 \ar[ur]^{\psi_2}\ar[dr]_{\alpha_2^1} & & P_3^1\ar[ur]^{\overline{\psi_3^1}}\ar[dr]_{\alpha_3^2} & & P^2_4 \ar[ur]^{\overline{\psi_4^2}}\ar[dr]_{f_3}& & M_4\ar[ur]^{\psi_4}&\\
M_1 \ar[ur]^{\psi_1}\ar[dr]_{\alpha_1^1}& & P_2^1\ar[ur]^{\overline{\psi_2^1}}\ar[dr]_{\alpha^2_2} & & P^2_3\ar[ur]^{\overline{\psi^2_3}}\ar[dr]_{f_2}& & M_3\ar[ur]^{\psi_3}&&\\
& P_1^1 \ar[ur]^{\overline{\psi_1^1}}\ar[dr]_{\alpha_1^2} & & P_2^2\ar[ur]^{\overline{\psi_2^2}}\ar[dr]_{f_1}& & M_2\ar[ur]^{\psi_2} & &&\\
& & P_1^2 \ar[ur]^{\overline{\psi_1^2}} & & M_1\ar[ur]^{\psi_1} & & & &&
}\]
\normalsize

From the below lemma it follows that the cokernels of $\psi_1$, $\overline{\psi_1^1}$,\ldots ,$\overline{\psi_1^n}$ are isomorphic and thus all isomorphic to $M_1$.  As in the diagram above we can complete the picture by letting $f_1$ be a cokernel of $\overline{\psi_1^n}$. It follows that $f_1\circ\alpha_2^n\circ\cdots\circ\alpha^2_1$ is the cokernel of $\psi_1$. Since $\phi_2$ is also a cokernel of $\psi_1$, by postcomposing $f_1$ with an isomorphism, we may assume that $f_1\circ\alpha_2^n\circ\cdots\circ\alpha_2^1=\phi_2$.

\begin{lemma}
Suppose that the following diagram occurs in an abelian category and that the left hand square in the following diagram is both a pushout and a pullback. The induced morphism $\epsilon:\coker \ \alpha \rightarrow\coker \ \beta$ is an isomorphism.

\xymatrix@=20pt{
  A \ar[d]^{\gamma} \ar[r]^{\alpha} & B \ar[d]^{\delta} \ar[r] & \coker \ \alpha \ar[d]^{\epsilon} \ar[r] & 0  \\
  D \ar[r]^{\beta} & E \ar[r] & \coker \ \beta \ar[r] & 0   }

\end{lemma}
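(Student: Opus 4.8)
The plan is to argue directly on the cokernel exact sequences, using that the left square is both a pushout and a pullback. First I would recall that since the left square is a pushout, the induced map $\epsilon:\coker\,\alpha\to\coker\,\beta$ exists and makes the diagram commute; this is standard from the universal property of cokernels together with the fact that $\beta\circ\gamma=\delta\circ\alpha$, so the composite $A\to B\to E\to\coker\,\beta$ kills $\alpha$ (it equals $\beta\gamma$ postcomposed with the projection, which is zero) and hence factors through $\coker\,\alpha$. So the content is surjectivity and injectivity of $\epsilon$.

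For surjectivity: the projection $E\to\coker\,\beta$ is epi, and I claim $E=\im\delta+\im\beta$. Indeed, a pushout square $B\leftarrow A\to D$ with vertex $E$ has the property that $E$ is generated by the images of $\delta$ and $\beta$ — concretely $E=\coker(A\xrightarrow{(\alpha,-\gamma)}B\oplus D)$, so every element of $E$ is $\delta(b)+\beta(d)$ for some $b,d$. Projecting to $\coker\,\beta$ the $\beta(d)$ term dies, so $\coker\,\beta$ is the image of $\delta(B)$; but $\delta(B)$ maps into $\coker\,\beta$ through $\coker\,\alpha$ via $\epsilon$ (by commutativity of the right square), hence $\epsilon$ is epi.

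For injectivity I would use the pullback hypothesis, or more efficiently the standard fact that in an abelian category a square that is simultaneously a pushout and a pullback sits in a short exact sequence $0\to A\xrightarrow{(\alpha,-\gamma)} B\oplus D\xrightarrow{(\delta,\beta)} E\to 0$. Take $b\in B$ with $\delta(b)\in\im\beta$, say $\delta(b)=\beta(d)$; then $(b,d)\in B\oplus D$ maps to $\delta(b)-\beta(d)=0$ in $E$, so by exactness $(b,d)=(\alpha(a),-\gamma(a))$ for some $a\in A$, whence $b=\alpha(a)\in\im\alpha$. Thus $\delta^{-1}(\im\beta)=\im\alpha$, which says exactly that the map $\coker\,\alpha\to\coker\,\beta$ induced by $\delta$ is injective; i.e. $\epsilon$ is mono. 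Combining, $\epsilon$ is an isomorphism.

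The only mild subtlety — and the step I would be most careful about — is invoking the short exact sequence $0\to A\to B\oplus D\to E\to 0$: this is precisely the characterization of a square being both a pushout and a pullback (pushout gives exactness at $E$ and the surjectivity, pullback gives exactness at $A$, i.e. injectivity of $(\alpha,-\gamma)$), so I would either cite this as a well-known property of abelian categories or spend one line deriving it. Everything else is a routine diagram chase, which for a general abelian category is legitimate by the Freyd–Mitchell embedding theorem or can be phrased element-free using the sequence above.
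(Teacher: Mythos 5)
Your proof is correct and takes essentially the same approach as the paper: surjectivity of $\epsilon$ from the pushout property $\im\beta+\im\delta=E$, and injectivity from the pullback property, which the paper simply cites from the literature while you derive explicitly from the short exact sequence $0\to A\to B\oplus D\to E\to 0$. (One trivial sign slip: with your maps $(\alpha,-\gamma)$ and $(\delta,\beta)$, the element to consider in the injectivity step is $(b,-d)$ rather than $(b,d)$.)
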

\begin{proof}
Since the left hand square is a pullback, \cite[10.1.2]{Handbookcatalg2} implies that $\epsilon$ is a monomorphism. That $\epsilon$ is surjective follows from the fact that $E$ is a pushout and thus $\im \beta+\im \delta=E$.
\end{proof}

Consider the following pushout.

\[\xymatrix@=20pt{
  P^n_2 \ar[d]_{f_1} \ar[r]^{\overline{\psi_2^n}} & P^n_3 \ar[d]^{f_2} \\
  M_1 \ar[r]^{g_1} & N   }\]

By putting pushouts side by side, we get that

\[\xymatrix@=20pt{
  M_2 \ar[d]_{\phi_2} \ar[r]^{\psi_2} & M_3 \ar[d]^{f_2\circ\alpha_3^n\circ\ldots\circ\alpha_3^1} \\
  M_1 \ar[r]^{g_1} & N   }\]

is also a pushout.

Thus $N$ is isomorphic to $M_2$ and by postcomposing by an isomorphism, we may assume $g_1=\psi_1$ and $f_2\circ\alpha_3^n\circ\ldots\circ\alpha_3^1=\phi_3$.
Continuing in this way we can complete the whole picture as in the diagram.

\bigskip

As in \cite{Krausegenmods}, let $\widehat{M}$ be the inverse limit of
\[M_1\xleftarrow{\phi_1} M_2\xleftarrow{\phi_2} M_3\xleftarrow{\phi_3}\ldots\] and let $u_1:\widehat{M}\rightarrow M_1$ be the induced  morphism to $M_1$.

Our main task in this section is to equip $\hat{M}\oplus\bigoplus_{i=1}^nP_i$ with the structure of a projective left $R_n$-module.

%
%
%

For each $n\in\N_0$, define $K_n\subseteq \Hom_{\mcal{A}}(\widehat{M}\oplus\bigoplus_{i=1}^nP^i,P^{n+1})$ inductively by setting $K_0=k\beta_1$ where $\beta_1=\alpha_1u_1$ and $K_{n+1}$ to be the subset of elements of $\Hom_{\mcal{A}}(\widehat{M}\oplus\bigoplus_{i=1}^{n+1}P^i,P^{n+2})$ of the form
\[(\alpha_{n+2}f,\lambda\alpha_{n+2}): \left(
                                                    \begin{array}{c}
                                                      m \\
                                                      p_{n+1} \\
                                                    \end{array}
                                                  \right)\mapsto \alpha_{n+2}\circ f(m)+\lambda\alpha_{n+2}(p_{n+1})\] where $f\in K_n \text{ and }\lambda\in k$.

For each $n\in \N_0$, we define $i_n:R_n\hookrightarrow \End(\widehat{M}\oplus\bigoplus_{i=1}^nP_i)$ and $\Delta_n:L_n\rightarrow K_n$ an isomorphism of $R_n$-modules when $K_n$ is viewed as a right $R_n$-module via $i_n$. Let $i_0:R_0\hookrightarrow \End(\widehat{M})$ be as in \cite{Krausegenmods} and $\Delta_0:L_0\rightarrow K_0$ be defined by $\Delta_0:\lambda\in k\mapsto \lambda\beta_1$. If $r\in R_n,l\in L_n$ and $\lambda\in k$ then let $i_{n+1}$ send $\left(
                                                                                                                               \begin{array}{cc}
                                                                                                                                 r & 0 \\
                                                                                                                                 l & \lambda \\
                                                                                                                               \end{array}
                                                                                                                             \right)
$ to the $\widehat{M}\oplus\bigoplus_{i=1}^{n+1},P_i$ endomorphism
\[\left(
    \begin{array}{c}
      m \\
      p_{n+1} \\
    \end{array}
  \right)\mapsto \left(
                   \begin{array}{c}
                     i_n(r)(m) \\
                     \Delta_n(l)(m)+\lambda p_{n+1} \\
                   \end{array}
                 \right).
\]
Let $\Delta_{n+1}:L_{n+1}\rightarrow K_{n+1}$ be defined by $(\lambda 1_{L_{n}},l)\mapsto (\alpha_{n+1}\circ\Delta_n(l),\lambda\alpha_{n+1})$.

\begin{proposition}
For each $n\in\N_0$, $i_n:R_n\hookrightarrow \End(\widehat{M}\oplus\bigoplus_{i=1}^nP_i)$ is an embedding of rings and $\Delta_n$ is an isomorphism of $R_n$-modules.
\end{proposition}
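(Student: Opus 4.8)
The plan is to proceed by induction on $n$, since both the ring embedding $i_n$ and the module isomorphism $\Delta_n$ are defined recursively in terms of their predecessors, and the inductive hypothesis for $R_n$ is exactly what is needed to make the definition of $R_{n+1}$ as a one-point extension transparent. The base case $n=0$ amounts to the observations already recorded: $i_0:R_0=k[[x]]\hookrightarrow\End(\widehat{M})$ is the embedding from \cite{Krausegenmods} (one uses here that $u_1:\widehat M\to M_1$ is the canonical map to the terminal term of the inverse system, so that $x$ acts on $\widehat M$ without nilpotence), and $\Delta_0:\lambda\mapsto\lambda\beta_1$ is visibly a $k$-linear isomorphism onto $K_0=k\beta_1$; one checks it is $R_0$-linear by unwinding how $i_0(r)$ acts, using that $\beta_1=\alpha_1 u_1$ and the compatibility of $u_1$ with the $\phi_i$.

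For the inductive step, assume $i_n$ is a ring embedding and $\Delta_n:L_n\to K_n$ is an $R_n$-module isomorphism. First I would verify that $i_{n+1}$ is a ring homomorphism: this is a direct matrix computation comparing the product in $R_{n+1}=\begin{pmatrix} R_n & 0\\ L_n & k\end{pmatrix}$ with composition of the block-triangular endomorphisms $\begin{pmatrix} m\\ p_{n+1}\end{pmatrix}\mapsto\begin{pmatrix} i_n(r)(m)\\ \Delta_n(l)(m)+\lambda p_{n+1}\end{pmatrix}$, and the only nontrivial identity to check is that $\Delta_n$ is $R_n$-linear in the sense $\Delta_n(l\cdot r)=\Delta_n(l)\circ i_n(r)$ on $\widehat M$ — which is precisely the inductive hypothesis that $\Delta_n$ is an $R_n$-module map, combined with the fact that $K_n$ sits inside $\Hom_{\mcal A}(\widehat M\oplus\bigoplus_{i\le n}P^i,P^{n+1})$ with its right $R_n$-action given by precomposition with $i_n$. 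That $i_{n+1}$ is injective follows since its restriction to the $R_n$-corner is $i_n$ (injective by hypothesis) and the bottom row recovers $l$ via $\Delta_n(l)$ (injective since $\Delta_n$ is) and $\lambda$ via the action on the $P^{n+1}$-coordinate; a short argument rules out a nonzero element of the kernel.

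Next I would check that $\Delta_{n+1}:L_{n+1}\to K_{n+1}$ is well-defined, bijective, and $R_{n+1}$-linear. Recall $L_{n+1}=F_1L_n=(\Hom(L_n,L_n),L_n,\mathrm{Id})$, and by Lemma \ref{dimHomLnLn} the space $\Hom(L_n,L_n)$ is one-dimensional, so an element of $L_{n+1}$ is a pair $(\lambda\,1_{L_n},l)$ with $\lambda\in k$, $l\in L_n$; $\Delta_{n+1}$ sends this to $(\alpha_{n+2}\circ\Delta_n(l),\lambda\alpha_{n+2})\in K_{n+1}$. Bijectivity is immediate from the bijectivity of $\Delta_n$ together with the fact that $\alpha_{n+2}$ (being an embedding $P^{n+1}\hookrightarrow P^{n+2}$) induces an injection on the relevant Hom-spaces, so the description of $K_{n+1}$ as $\{(\alpha_{n+2}f,\lambda\alpha_{n+2})\st f\in K_n,\ \lambda\in k\}$ matches $L_{n+1}$ term by term. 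The $R_{n+1}$-linearity is again a block computation: one applies $i_{n+1}\begin{pmatrix} r&0\\ l'&\lambda'\end{pmatrix}$ to both sides and checks the two expressions agree, which reduces to the already-established $R_n$-linearity of $\Delta_n$ and the defining formula for $K_{n+1}$.

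I expect the main obstacle to be purely bookkeeping: keeping the block-triangular structure straight across two different one-point-extension layers (the ring side $R_{n+1}$ and the module side $K_{n+1}\subseteq\Hom$), and in particular making sure the right $R_n$-action on $K_n$ induced via $i_n$ is the one for which $\Delta_n$ is claimed to be an isomorphism — this circularity between ``$\Delta_n$ is a module map'' and ``$i_n$ makes $K_n$ a module'' has to be threaded carefully through the induction. There is no deep difficulty; the content is that the recursive definitions were set up to make the one-point-extension bookkeeping work, and the proof is the verification that they do. Finally I would remark that the map $\widehat M\oplus\bigoplus_{i=1}^n P^i$ thereby becomes a left $R_n$-module via $i_n$, and that it is in fact projective — generated by the idempotents corresponding to the diagonal of $R_n$ — which is what is needed for the subsequent construction of the interpretation functor into $\Mod\text{-}\mcal A$.
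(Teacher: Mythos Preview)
Your proposal is correct and follows essentially the same inductive route as the paper. The paper's proof is terser: it dispatches the ring-embedding property of $i_{n+1}$ as ``a quick computation'' and establishes bijectivity of $\Delta_{n+1}$ by checking injectivity (via injectivity of $\alpha_{n+2}$ and of $\Delta_n$) together with a $k$-dimension count, whereas you give a direct term-by-term matching of $L_{n+1}$ with $K_{n+1}$; you are also more explicit than the paper about verifying $R_{n+1}$-linearity of $\Delta_{n+1}$ and about isolating the identity $\Delta_n(l\cdot r)=\Delta_n(l)\circ i_n(r)$ as the content of the ring-homomorphism check. These are cosmetic differences within the same argument. Your closing remark about projectivity belongs to the next proposition, not this one.
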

\begin{proof}
That $i_0$ is an embedding is already covered in \cite{Krausegenmods} and $\Delta_0$ is defined to be an isomorphism.

Suppose that $i_{n}:R_n\rightarrow \End(\widehat{M}\oplus\bigoplus_{i=1}^nP_i)$ is an embedding of rings and $\Delta_n$ is an isomorphism of $R_n$-modules.

A quick computation shows that $i_{n+1}$ is an embedding of rings.

If $\Delta_{n+1}(v,l)=0$ then $\alpha_{n+2}\circ\Delta_n(l)=0$ and $v=0$. Since $\alpha_{n+2}$ is injective, $\Delta_{n+1}(v,l)=0$ implies $\Delta_n(l)=0$. So $\Delta_{n+1}$ is injective. Since $K_{n+1}$ and $L_{n+1}$ are both finite-dimensional of the same dimension over $k$, $\Delta_{n+1}$ is an isomorphism.

\end{proof}

Now that we have equipped $\widehat{M}\oplus\bigoplus_{i=1}^nP_i$ with the structure of a left $R_n$-module, we need to show that it is a projective left $R_n$-module.

We inductively define a a set of orthogonal idempotents for $R_n$.
Let $c_1=\left(
         \begin{array}{cc}
           1 & 0 \\
           0 & 0 \\
         \end{array}
       \right)\in R_1$ and $e_1^1=\left(
                                    \begin{array}{cc}
                                      0 & 0 \\
                                      0 & 1 \\
                                    \end{array}
                                  \right)\in R_1$. Let $c_{n+1}=\left(
                                                              \begin{array}{cc}
                                                                c_n & 0 \\
                                                                0 & 0 \\
                                                              \end{array}
                                                            \right)
                                  $, $e_i^{n+1}=\left(
                                                 \begin{array}{cc}
                                                   e_i^n & 0 \\
                                                   0 & 0 \\
                                                 \end{array}
                                               \right)
                                  $ for $1\leq i\leq n$ and $e_{n+1}^{n+1}=\left(
                                                                             \begin{array}{cc}
                                                                               0 & 0 \\
                                                                               0 & 1 \\
                                                                             \end{array}
                                                                           \right)$.
Note that $c_n+\sum_{i=1}^ne_i^n=1$, $c_ne_i^n=e_i^nc_n=0$, $e_i^ne_j^n=e_j^ne_i^n=0$, $c_nc_n=c_n$ and $e_i^ne_i^n=e_i^n$.

So for each $n$, $e_1^n,\ldots, e_n^n,c_n$ are a set of orthogonal idempotents.

\begin{proposition}\label{bimoduleisprojective}
As a left $R_n$-module, $\widehat{M}\oplus\bigoplus_{i=1}^nP_i$ is isomorphic to \[(R_nc_n)^{d_0}\oplus (R_ne_1^n)^{d_1}\oplus\ldots\oplus(R_ne_n^n)^{d_n}\] where $d_0=\dim M_1, d_1=\dim P_1-\dim M_1$ and for $i>1$, $d_i=\dim P_i-\dim P_{i-1}$.
\end{proposition}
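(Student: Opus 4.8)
The plan is an induction on $n$. The base case $n=0$ is Krause's description of the coray inverse limit: $\widehat{M}$, with the $R_0=k[[x]]$-structure $i_0$ of \cite{Krausegenmods}, is free of rank $d_0=\dim_k M_1$, and $u_1$ induces an isomorphism $\widehat{M}/\mfrak{m}\widehat{M}\xrightarrow{\sim}M_1$. For the inductive step, write $B_n:=\widehat{M}\oplus\bigoplus_{i=1}^nP^i$ with its $R_n$-structure. I would first recall that a left $R_{n+1}=R_n[L_n]$-module is the same as a triple $(N_1,N_0,\gamma)$ with $N_1$ a left $R_n$-module, $N_0$ a $k$-space and $\gamma\colon L_n\otimes_{R_n}N_1\to N_0$ a $k$-map (split off the corner idempotents $\begin{pmat}1&0\\0&0\end{pmat}$ and $e_{n+1}^{n+1}=\begin{pmat}0&0\\0&1\end{pmat}$), and then check straight from the definitions of $i_{n+1}$ and $\Delta_n$ that, under this correspondence, $B_{n+1}=(B_n,\,P^{n+1},\,\gamma_{n+1})$ where $\gamma_{n+1}(l\otimes m)=\Delta_n(l)(m)$.

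Next I would record how projectives look in this picture. Computing the indecomposable projective left $R_{n+1}$-modules as triples, one gets $R_{n+1}\begin{pmat}f&0\\0&0\end{pmat}\cong(R_nf,\ L_n\otimes_{R_n}R_nf,\ \mathrm{id})$ for $f\in\{c_n,e_1^n,\dots,e_n^n\}$ (so this is $R_{n+1}c_{n+1}$, resp.\ $R_{n+1}e_j^{n+1}$), while $R_{n+1}e_{n+1}^{n+1}=(0,k,0)$. Hence, for $N_1$ projective over $R_n$ and $\gamma\colon L_n\otimes_{R_n}N_1\to N_0$ injective, splitting a complement $W$ of $\im\gamma$ off $N_0$ gives $(N_1,N_0,\gamma)\cong(N_1,\ L_n\otimes_{R_n}N_1,\ \mathrm{id})\oplus(0,W,0)$, which is projective; and applying this with $N_1=B_n$, the inductive decomposition of $B_n$ turns $(N_1,\,L_n\otimes_{R_n}N_1,\,\mathrm{id})$ into $(R_{n+1}c_{n+1})^{d_0}\oplus\bigoplus_{j=1}^n(R_{n+1}e_j^{n+1})^{d_j}$ and leaves a further $(R_{n+1}e_{n+1}^{n+1})^{t}$ with $t=\dim_kP^{n+1}-\dim_k(L_n\otimes_{R_n}B_n)$.

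So the step reduces to showing that $\gamma_{n+1}$ is injective and that $t=d_{n+1}$, and both follow once I compute $\dim_k(L_n\otimes_{R_n}B_n)$. From the inductive decomposition of $B_n$ and $L_n\otimes_{R_n}R_nf\cong L_nf$, this dimension is $d_0\dim_k L_nc_n+\sum_{j=1}^n d_j\dim_k L_ne_j^n$; a short induction on $n$ via $L_{n+1}=F_1L_n$ and Lemma~\ref{dimHomLnLn} (reading off the $f$-components of the triple $F_1L_n$) gives $\dim_k L_nf=1$ for each such $f$, so by the telescoping definition of the $d_i$ one gets $\dim_k(L_n\otimes_{R_n}B_n)=d_0+d_1+\cdots+d_n=\dim_kP^n$ (and for $n=0$, $\dim_k(L_0\otimes_{R_0}\widehat{M})=\dim_k(\widehat{M}/\mfrak{m}\widehat{M})=\dim_kM_1$ by Krause). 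This makes $t=\dim_kP^{n+1}-\dim_kP^n=d_{n+1}$. For the injectivity: when $n=0$, under $L_0\otimes_{R_0}\widehat{M}=\widehat{M}/\mfrak{m}\widehat{M}$ the map $\gamma_1$ is $\alpha^1$ composed with Krause's isomorphism $\widehat{M}/\mfrak{m}\widehat{M}\cong M_1$, hence injective; when $n\ge1$, the recursive formula for $\Delta_n$ gives $\gamma_{n+1}\big((\mu 1_{L_{n-1}},l)\otimes(m',p_n)\big)=\alpha^{n+1}\big(\gamma_n(l\otimes m')+\mu p_n\big)$ with $\alpha^{n+1}\colon P^n\hookrightarrow P^{n+1}$ the structure embedding, so $\gamma_{n+1}=\alpha^{n+1}\circ\theta$ where $\theta\colon L_n\otimes_{R_n}B_n\to P^n$ is onto $P^n$ (take $\mu=1$, $l=0$, $m'=0$) between $k$-spaces of equal dimension $\dim_kP^n$, hence an isomorphism; as $\alpha^{n+1}$ is injective, so is $\gamma_{n+1}$. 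Combining the pieces gives the stated decomposition of $B_{n+1}$, completing the induction.

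The step I expect to cost the most care is the one I glossed as routine: setting up the triple description for \emph{left} $R_{n+1}$-modules together with the description of the projectives (in particular that a projective with no $R_{n+1}e_{n+1}^{n+1}$-summand is exactly $(N_1,\,L_n\otimes_{R_n}N_1,\,\mathrm{id})$ with $N_1$ projective over $R_n$), and then keeping the dimension bookkeeping honest — especially $\dim_k L_nf=1$ and hence $\dim_k(L_n\otimes_{R_n}B_n)=\dim_kP^n$, on which both the injectivity of $\gamma_{n+1}$ and the multiplicity of $R_{n+1}e_{n+1}^{n+1}$ rest.
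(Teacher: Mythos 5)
Your proof is correct and follows essentially the same route as the paper: induction on $n$, the description of left modules over the one-point extension as triples, the identification of the indecomposable projectives as triples, and the dimension count $\dim_k(L_n\otimes_{R_n}B_n)=d_0+\cdots+d_n=\dim_k P^n$. The only cosmetic difference is that you split the triple $(B_n,P^{n+1},\gamma_{n+1})$ directly after proving $\gamma_{n+1}$ injective, whereas the paper builds a morphism of triples from the projective module and adjusts $\Theta_n$ by matching the dimensions of its kernel and cokernel; your version makes the injectivity step, which the paper leaves implicit, more explicit.
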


Before we prove this proposition, we describe how to understand left modules over (right) one-point extensions. Let $\mcal{G}$ be the category with objects $(M,V,\Upsilon)$ where $M$ is a left $R$-module, $V$ is a $k$-vector space and $\Upsilon: {_k}L\otimes M\rightarrow V$ is a $k$-linear map and morphisms $(f,g):(M_1,V_1,\Upsilon_1)\rightarrow (M_2,V_2,\Upsilon_2)$ where $f:M_1\rightarrow M_2$ is a left $R$-module map and $g:V_1\rightarrow V_2$ is $k$-linear and
\[\xymatrix@=20pt{
  L\otimes M_1 \ar[d]_{Id_L\otimes f} \ar[r]^{\Upsilon_1} & V_1 \ar[d]^{g} \\
  L\otimes M_2 \ar[r]^{\Upsilon_2} & V_2   }\] commutes. The category $\mcal{G}$ is equivalent to the category of left modules over $\left(
                                                                                                                                      \begin{array}{cc}
                                                                                                                                        R & 0 \\
                                                                                                                                        L & k \\
                                                                                                                                      \end{array}
                                                                                                                                    \right)$. This equivalence is given by the functors:
\[\left(
    \begin{array}{cc}
      R & 0 \\
      L & k \\
    \end{array}
  \right)-\Mod\rightarrow \mcal{G}: M\mapsto (\left(
                                                \begin{array}{cc}
                                                  1 & 0 \\
                                                  0 & 0 \\
                                                \end{array}
                                              \right)M, \left(
                                                          \begin{array}{cc}
                                                            0 & 0 \\
                                                            0 & 1 \\
                                                          \end{array}
                                                        \right)M, l\otimes m\mapsto \left(
                                                                                     \begin{array}{cc}
                                                                                       0 & 0 \\
                                                                                       l & 0 \\
                                                                                     \end{array}
                                                                                   \right)m
                                                          )
\] and
\[\mcal{G}\rightarrow \left(
    \begin{array}{cc}
      R & 0 \\
      L & k \\
    \end{array}
  \right)-\Mod: (M,V,\Upsilon)\mapsto M\oplus V \] with

 \[\left(
     \begin{array}{cc}
       r & 0 \\
       l & a \\
     \end{array}
   \right)
 \left(
   \begin{array}{c}
     m \\
     v \\
   \end{array}
 \right)
 =\left(
    \begin{array}{c}
      rm \\
      av+\Upsilon(l\otimes m) \\
    \end{array}
  \right)
 \] for $r\in R$, $l\in L$ and $a\in k$.

There are $n+1$ indecomposable projective over $R_n$ each corresponding to one of the idempotents $c_n,e^n_1,\ldots,e^n_n$.

As objects in $\mcal{G}$ they are described inductively: $R_{n+1}c_{n+1}$ corresponds to $(R_nc_n,L_nc_n,l\otimes rc_n\mapsto lrc_n)$, $R_{n+1}e_i^{n+1}$ corresponds to $(R_ne^n_i,L_ne_i^n, l\otimes re^n_i\mapsto lre^n_i)$ and $R_{n+1}e_{n+1}^{n+1}$ corresponds to $(0,k,0)$.

\begin{proof}[proof of proposition \ref{bimoduleisprojective}]
As always we prove the statement by induction on $n$. The base case just says that $\widehat{M}$ is isomorphic to $V^{d_0}$ where $d_0=\dim M_1$. This has already be proved in \cite[lemma 8.8]{Krausegenmods}. Suppose the statement is true for $n$.

 So there is an isomorphism $T_n:(R_nc_n)^{d_0}\oplus(R_ne_1^n)^{d_1}\oplus\ldots\oplus(R_ne_n^n)^{d_n}\rightarrow\widehat{M}\oplus\bigoplus_{i=1}^nP_i$ of left $R_n$-modules. Write $((c_n)^{d_0},(e_1^n)^{d_1},\ldots,(e_n^n)^{d_n})$ for the $\sum_{i=0}^nd_i$-tuple with first $d_0$ entries $c_n$, the next $d_1$ entries $e_1^n$ and so on, with final $d_n$ entries $e_n^n$.

Define $\Theta_n:[(L_nc_n)^{d_0}\oplus\ldots\oplus(L_ne_n^n)^{d_n}]\oplus k^{d_{n+1}}\rightarrow P_{n+1}$ to be the map defined by \[(\overline{l},\overline{\lambda}) \mapsto \Delta_n(\overline{l})(T_n(c_n)^{d_0},T_n(e_1^n)^{d_1},\ldots,T_n(e_n^n)^{d_n})\] for all $\overline{l}\in (L_nc_n)^{d_0}\oplus\ldots\oplus(L_ne_n^n)^{d_n}$ and $\overline{\lambda}\in k^{d_{n+1}}$.

The diagram

\tiny
\[\xymatrix@R=30pt@C=50pt{
  L_n\otimes[(R_nc_n)^{d_0}\oplus(R_ne_1^n)^{d_1}\oplus\ldots\oplus(R_ne_n^n)^{d_n}] \ar[d]_{\Delta_n\otimes T_n} \ar[r]^{\Upsilon_n} & [(L_nc_n)^{d_0}\oplus\ldots\oplus(L_ne_n^n)^{d_n}]\oplus k^{d_{n+1}} \ar[d]^{\Theta_n} \\
  K_n\otimes[\widehat{M}\oplus\bigoplus_{i=1}^nP_i] \ar[r]^{\Omega_n} & P_{n+1}   }\]
\normalsize
 commutes since for each idempotent $e\in\{c_n,e_1^n,\ldots,e_n^n\}$ and $r\in R_n$, \[\Omega_n(\Delta_n(l)\otimes T_n( re))=\Delta_n(l)[reT_n(e)]=\Delta_n(lre)[T_n(e)]\] and
 \[\Theta_n(\Upsilon_n(l\otimes re))=\Theta_n(lre)=\Delta_n(lre)[T_n(e)].\]

 The map given by the pair $(\Delta_n\otimes T_n, \Theta_n)$ is not an isomorphism. By induction, one can show that the image of the map which sends $\gamma\otimes m\in K_n\otimes (\widehat{M}\oplus\bigoplus_{i=1}^n)$ to $\gamma(m)\in P_{n+1}$ has dimension $\dim P_{n+1}-d_{n+1}$. The dimension of $(L_nc_n)^{d_0}\oplus(L_ne^n_{1})^{d_1}\oplus\ldots\oplus (L_ne_n^n)^{d_n}$ is $\sum_{i=0}^nd_i=\dim P_{n+1}-d_{n+1}$. Thus, since $\Delta_n\otimes T_n$ is an isomorphism, the kernel of $\Theta_n$ has the same dimension as the cokernel of $\Theta_n$. Thus we can extend $\Theta_n$ to an isomorphism so that the diagram above still commutes. Thus  $\widehat{M}\oplus\bigoplus_{i=1}^{n+1}P_i$ is isomorphic to  $(R_{n+1}c_{n+1})^{d_0}\oplus (R_{n+1}e_1^{n+1})^{d_1}\oplus\ldots\oplus(R_{n+1}e_{n+1}^{n+1})^{d_{n+1}}$.

%
%
%
\end{proof}

\begin{theorem}\label{intfunmappingontogenraytube}
Let $\mcal{A}$ be a finite-dimensional algebra over a field $k$ and $(\mcal{T}, (P^i,\alpha_i)_{i=1}^n)$ be a generalised ray tube in $\mod-\mcal{A}$. The functor \[-\otimes \widehat{M}\oplus\bigoplus_{i=1}^nP^i:\Mod-R_n\rightarrow\Mod-\mcal{A}\] is such that all indecomposable pure-injectives in the closure of the generalised ray tube are direct summands of modules in the image of $-\otimes \widehat{M}\oplus\bigoplus_{i=1}^nP^i$.
\end{theorem}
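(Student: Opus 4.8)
Proof proposal.

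The plan is to show that the Ziegler closure of the generalised ray tube is contained in the definable subcategory of $\Mod-\mcal{A}$ generated by the image of the functor $-\otimes_{R_n}B_n$, where I write $B_n:=\widehat M\oplus\bigoplus_{i=1}^nP^i$. Since $\mcal{A}$ is an artin algebra, each finitely presented module $M_i$ $(i\geq 1)$ and $P^i_j$ $(1\leq i\leq n,\ j\geq 1)$ in the tube has finite length, hence is pure-injective, hence is a point of $\Zg_{\mcal{A}}$; by the bijection between closed subsets of $\Zg_{\mcal{A}}$ and definable subcategories, the modules in the closure of the tube are exactly the indecomposable pure-injectives of the smallest definable subcategory $\mcal{D}$ of $\Mod-\mcal{A}$ containing all the $M_i$ and $P^i_j$. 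By Proposition \ref{bimoduleisprojective}, $B_n$ is finitely generated projective as a left $R_n$-module, so $-\otimes_{R_n}B_n$ is exact (being flat) and commutes with arbitrary products (being finitely presented) as well as with direct limits; thus it is an interpretation functor. By fact (5) on interpretation functors in the preliminaries, $\mcal{E}:=\im(-\otimes_{R_n}B_n)$, closed under direct summands and isomorphism, is a definable subcategory. It therefore suffices to show every $M_i$ and every $P^i_j$ is a direct summand of some $X\otimes_{R_n}B_n$: then $\mcal{D}\sseq\mcal{E}$, and every indecomposable pure-injective in $\mcal{D}$ — in particular those in the closure of the tube — is such a direct summand.

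First I would compute the functor on the indecomposable projectives. From the explicit form of $i_n$ one checks, by induction on $n$, that $i_n(c_n)$ and $i_n(e^n_i)$ are the projections of $B_n=\widehat M\oplus\bigoplus_{i=1}^nP^i$ onto $\widehat M$ and onto $P^i$ respectively; hence $c_nR_n\otimes_{R_n}B_n\cong c_nB_n=\widehat M$ and $e^n_iR_n\otimes_{R_n}B_n\cong e^n_iB_n=P^i$. Identifying $F_0\colon\Mod-R_{n-1}\to\Mod-R_n$ with restriction of scalars along the ring surjection $R_n\to R_{n-1}$ gives a natural isomorphism $F_0Y\otimes_{R_n}B_n\cong Y\otimes_{R_{n-1}}B_{n-1}$ for $Y\in\Mod-R_{n-1}$ (the $R_{n-1}$-part of the left $R_n$-module $B_n$ is $B_{n-1}$); iterating, $F_0^nY\otimes_{R_n}B_n\cong Y\otimes_V\widehat M$, which is Krause's functor. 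In particular $F_0^n(V/\mfrak{m}^j)\otimes_{R_n}B_n\cong\widehat M/\mfrak{m}^j\widehat M\cong M_j$ by \cite{Krausegenmods}, and $F_0^n(V/\mfrak{m}^j)$ is finitely presented over $R_n$ by Theorem \ref{descfpind}; so every $M_j$ lies in the image.

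For the $P^i_j$ I would first apply the pasting law for pushouts repeatedly to the defining pushout squares to identify $P^i_j$ with the pushout of $M_j\xleftarrow{\psi_{j-1}\cdots\psi_1}M_1\xrightarrow{\alpha^i\cdots\alpha^1}P^i$, equivalently with the cokernel of $M_1\to M_j\oplus P^i$, $m\mapsto(\psi_{j-1}\cdots\psi_1(m),-\alpha^i\cdots\alpha^1(m))$. Since $-\otimes_{R_n}B_n$ is right exact, it is enough to produce an $R_n$-homomorphism $g\colon F_0^n(V/\mfrak{m})\to F_0^n(V/\mfrak{m}^j)\oplus e^n_iR_n$ which $-\otimes_{R_n}B_n$ carries to this map — or, since a pushout is unaffected by post-composing with automorphisms, to one with isomorphic cokernel — because then $\coker g$ is finitely presented over $R_n$ and $(\coker g)\otimes_{R_n}B_n\cong P^i_j$. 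The first component of $g$ should be $F_0^n$ of the up-to-scalar unique nonzero map $V/\mfrak{m}\xrightarrow{\cdot x^{j-1}}V/\mfrak{m}^j$, which by the previous paragraph and Krause's description of the connecting maps $\psi_i$ of the tube is sent to a nonzero multiple of $\psi_{j-1}\cdots\psi_1$; the second component is obtained from the adjunctions $F_0\dashv r$ and $r\dashv F_1$, which give $\Hom_{R_n}(F_0^n(V/\mfrak{m}),e^n_iR_n)\cong\Hom_V(V/\mfrak{m},r^n(e^n_iR_n))\cong\Hom_V(V/\mfrak{m},V/\mfrak{m})\cong k$, whose nonzero element must be shown to be sent to a nonzero — hence, up to scalar, to $\alpha^i\cdots\alpha^1$ — morphism $M_1\to P^i$.

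The step I expect to be the main obstacle is precisely this last verification: that the explicit $R_n$-homomorphisms described above are carried by $-\otimes_{R_n}B_n$, up to automorphisms of source and target, to the prescribed morphisms $\psi_{j-1}\cdots\psi_1$ and $\alpha^i\cdots\alpha^1$. This rests on a careful reading of how $\widehat M$ and the connecting maps $\psi_i,\phi_i$ of the tube are constructed in \cite{Krausegenmods}, and on tracking the bimodule structure of $B_n$ through the one-point-extension adjunctions; the reduction in the first paragraph and the computation on the projectives are routine by comparison. Granting it, every $M_i$ and every $P^i_j$ lies in $\mcal{E}$, so $\mcal{D}\sseq\mcal{E}$, and every indecomposable pure-injective in the closure of the generalised ray tube is a direct summand of a module in the image of $-\otimes_{R_n}B_n$, as required.
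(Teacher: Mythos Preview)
Your proposal is correct and follows essentially the same route as the paper: reduce via definability of the image of the interpretation functor $-\otimes_{R_n}B_n$ to showing that the finite-dimensional modules in the tube lie in that image, then obtain $M_j$ as $F_0^n(V/\mfrak{m}^j)\otimes B_n$, obtain each $P^i$ as a summand of $R_n\otimes B_n=B_n$, and recover the $P^i_j$ from these via exactness of the functor and the pushout description of the tube. The paper's proof is terser precisely at the point you flag as the main obstacle—it simply asserts that because the functor is exact it commutes with pushouts and pullbacks, and stops—whereas you go further and sketch how to lift the defining morphisms $\psi_{j-1}\cdots\psi_1$ and $\alpha^i\cdots\alpha^1$ to $\Mod-R_n$ using the adjunctions; so your version is, if anything, more complete on that step.
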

\begin{proof}
Since $\widehat{M}\oplus\bigoplus_{i=1}^nP^i$ is projective and finitely presented as a left $k[[x]]$-module, it is enough to show, as in \ref{piinimF0F1plusT}, that all finite-dimensional modules in the generalised ray tube, are direct summands of modules in the image of $-\otimes \widehat{M}\oplus\bigoplus_{i=1}^nP^i$. Since $-\otimes \widehat{M}\oplus\bigoplus_{i=1}^nP^i$ is exact, it commutes with pushouts and pullbacks. Thus it is enough to note that $P^1,\ldots,P^n$ are direct summands of $R_n\otimes\widehat{M}\oplus\bigoplus_{i=1}^nP^i=\widehat{M}\oplus\bigoplus_{i=1}^nP^i$ and $F_0^n(k[[x]]/\langle x^i\rangle)\otimes\widehat{M}\oplus\bigoplus_{i=1}^nP^i=M_i$.
\end{proof}
%

Let $((M_i,\phi_i,\psi_i)_{i\in\N_0},(P^i,\alpha_i)_{i=1}^n)$ be a generalised ray tube. Let $M[\infty]:=F_0^nE(V/\mfrak{m})\otimes \widehat{M}\oplus\bigoplus_{i=1}^nP_i$ and $G:=F_0^nQ(V)\otimes \widehat{M}\oplus\bigoplus_{i=1}^nP_i$. For $1\leq i\leq n$, let $P^i[\infty]=F_0^{n-i}F_1^iE(V/\mfrak{m})\otimes \widehat{M}\oplus\bigoplus_{i=1}^nP_i$. Since $-\otimes \widehat{M}\oplus\bigoplus_{i=1}^nP^i$ commutes with direct limits, $P^i[\infty]$ is the direct limit of
\[P^i\hookrightarrow P^i_2\hookrightarrow P^i_3\hookrightarrow\ldots .\]

\begin{cor}\label{generalisedtubesclosure}
Let $\mcal{T}:=((M_i,\phi_i,\psi_i)_{i\in\N_0},(P^i,\alpha^i)_{i=1}^n)$ be a generalised ray tube. Suppose that $N$ is an infinite-dimensional module in the Ziegler closure of $\mcal{T}$.
\begin{enumerate}
\item $N$ is a direct summand of $M[\infty]$, $P^1[\infty],\ldots, P^n[\infty]$, $G$ or $\widehat{M}$.
\item $N$ has $m$-dimension $\leq 1$
\item $M[\infty]$ and $P^i[\infty]$ are $\Sigma$-pure-injective
\item $\widehat{M}$ has acc on pp-definable subgroups
\item A module in the Ziegler closure of $\widehat{M}$ is either a direct summand of $G$ or $\widehat{M}$
\item A module in the Ziegler closure of $P^i[\infty]$ (respectively $M[\infty]$) is either a direct summand of $G$ or $P^i[\infty]$ (respectively $M[\infty]$).
\end{enumerate}
\end{cor}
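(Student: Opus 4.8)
The plan is to transport the description of $\Zg_{R_n}$ and of its topology across the functor
\[F:=-\otimes_{R_n}\Big(\widehat{M}\oplus\bigoplus_{i=1}^nP^i\Big):\Mod-R_n\longrightarrow\Mod-\mcal{A},\]
where $R_n$ is the $n$-th iterated one-point extension of $R_0=k[[x]]$. By Proposition \ref{bimoduleisprojective} the module $\widehat{M}\oplus\bigoplus_{i=1}^nP^i$ is finitely generated projective as a left $R_n$-module, so $F$ is an interpretation functor; it is also exact and commutes with direct limits and products.

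First I record the identifications. By the proof of Theorem \ref{intfunmappingontogenraytube}, $M_i=F(F_0^n(k[[x]]/\langle x^i\rangle))$ and $F(R_n)=\widehat{M}\oplus\bigoplus_{i=1}^nP^i$, so each $P^i$ is a summand of $F(R_n)$; unwinding the identification of $P^i[\infty]$ with $\varinjlim_jP^i_j$ from just before the corollary gives $P^i_j=F(F_0^{n-i}F_1^i(k[[x]]/\langle x^j\rangle))$, and the explicit left $R_n$-module structure on $\widehat{M}\oplus\bigoplus P^i$ gives $\widehat{M}=F(F_0^n\widehat{V})$ (here $\widehat{V}=V=k[[x]]$, $k[[x]]$ being complete); by definition $M[\infty]=F(F_0^nE(V/\mfrak{m}))$, $P^i[\infty]=F(F_0^{n-i}F_1^iE(V/\mfrak{m}))$ and $G=F(F_0^nQ(V))$. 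Parts (3) and (4) follow at once: by Theorem \ref{indoveronepointextensions} the modules $F_0^nE(V/\mfrak{m})$ and $F_0^{n-i}F_1^iE(V/\mfrak{m})$ are $\Sigma$-pure-injective and $F_0^n\widehat{V}$ has acc on pp-definable subgroups, and interpretation functors preserve $\Sigma$-pure-injectivity and the acc on pp-definable subgroups. Part (2) follows from (1): every indecomposable pure-injective over $R_n$ has m-dimension $\le1$ (Remark after Theorem \ref{indoveronepointextensions}), so if $N$ is a summand of $F(N')$ with $N'\in\Zg_{R_n}$ then $\pp^1_{\mcal{A}}(F(N'))$ has m-dimension $\le1$, and $\pp^1_{\mcal{A}}(N)$, being a quotient lattice of it, does too.

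For part (1), the Ziegler closure of $\mcal{T}$ is $X:=\cl_{\Zg_{\mcal{A}}}\big(F(Y_0)\big)$, the closure of the set $\{M_i\}\cup\{P^i_j\}$ of modules in the tube, where $Y_0:=\{F_0^n(k[[x]]/\langle x^i\rangle):i\ge1\}\cup\bigcup_{i=1}^n\{F_0^{n-i}F_1^i(k[[x]]/\langle x^j\rangle):j\ge1\}$ is a set of finite-length $R_n$-modules. Using that in $\Zg_{k[[x]]}$ a closed set with infinitely many finite-length points contains the Pr\"{u}fer module, $\widehat{V}$ and $Q(V)$, that $F_0$ and $F_1$ restrict to homeomorphisms onto the closed subsets $\im F_0$, $\im F_1$ of $\Zg_{R_n}$, and the identities $F_0^pF_1^l\widehat{V}\cong F_0^n\widehat{V}$ and $F_0^pF_1^lQ(V)\cong F_0^nQ(V)$ of Theorem \ref{indoveronepointextensions}, one computes that the closure $\overline{Y_0}$ in $\Zg_{R_n}$ is $Y_0$ together with the finitely many points $F_0^nE(V/\mfrak{m})$, $F_0^{n-i}F_1^iE(V/\mfrak{m})$ ($1\le i\le n$), $F_0^n\widehat{V}$, $F_0^nQ(V)$. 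By the interpretation-functor facts, the image under $F$ of the definable subcategory associated to $\overline{Y_0}$, closed under summands, is a definable subcategory of $\Mod-\mcal{A}$; the closed subset of $\Zg_{\mcal{A}}$ it cuts out contains $X$, and, as in the proof of Proposition \ref{piinimF0F1plusT}, its points are exactly the indecomposable summands of the $F$-images of the points of $\overline{Y_0}$, namely of $M_i$, $P^i_j$ (finite-dimensional) and of $M[\infty]$, $P^i[\infty]$, $\widehat{M}$, $G$. Since an infinite-dimensional module is not a summand of a finite-dimensional one, this gives (1). Parts (5) and (6) are the same transport applied to a singleton: by the Remark after Theorem \ref{indoveronepointextensions} one has $\overline{\{N'\}}=\{N',F_0^nQ(V)\}$ for any non-finite-length indecomposable pure-injective $N'$ over $R_n$, so taking $N'$ to be $F_0^n\widehat{V}$, $F_0^{n-i}F_1^iE(V/\mfrak{m})$ or $F_0^nE(V/\mfrak{m})$ and transporting shows that every module in the Ziegler closure of $\widehat{M}$ (resp.\ $P^i[\infty]$, resp.\ $M[\infty]$) is a summand of $\widehat{M}$ or $G$ (resp.\ $P^i[\infty]$ or $G$, resp.\ $M[\infty]$ or $G$).

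The main obstacle is the transport principle invoked twice above: that an interpretation functor carries a Ziegler-closed set with only finitely many infinite-dimensional points to one whose infinite-dimensional points are precisely the indecomposable summands of the images of those points. I would prove this as in Proposition \ref{piinimF0F1plusT}, using that the finitely presented (hence, apart from $F_0^n\widehat{V}$, finite-length and pure-injective) modules are dense in $\overline{Y_0}$ and that $F$ preserves pure embeddings, products and direct limits; the facts (3) and (4) that $M[\infty]$, $P^i[\infty]$, $\widehat{M}$ are $\Sigma$-pure-injective or have acc on pp-definable subgroups keep their Ziegler closures under control, and the fact that $F_0^nQ(V)$ is a closed point of $\Zg_{R_n}$ makes $G$ the single ``generic-type'' closed point in the image.
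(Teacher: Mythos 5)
Your proposal is correct and follows essentially the same route as the paper: identify $\widehat{M}$, $M[\infty]$, $P^i[\infty]$ and $G$ as images under the interpretation functor $-\otimes(\widehat{M}\oplus\bigoplus_iP^i)$ of the indecomposable pure-injectives listed in Theorem \ref{indoveronepointextensions}, deduce (2)--(4) from the preservation properties of interpretation functors, and obtain (1), (5), (6) by transporting the closed sets $\overline{Y_0}$ and $\{N',F_0^nQ(V)\}$ from $\Zg_{R_n}$ as in Proposition \ref{piinimF0F1plusT} and Theorem \ref{intfunmappingontogenraytube}. The only point you flag as an obstacle --- that the infinite-dimensional points of the transported closed set are exactly the indecomposable summands of the images of the infinite-dimensional points upstairs --- is used equally implicitly in the paper's own (much terser) proof, so your write-up is if anything more explicit than the original.
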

\begin{proof}
(1) Follows from \ref{intfunmappingontogenraytube} and \ref{indoveronepointextensions}.

(2)Interpretation functors don't increase m-dimension plus \ref{indoveronepointextensions}.

(3) and (4) Interpretation functors preserve acc and dcc on pp-definable subgroups.

(5) and (6) It follows from \ref{indoveronepointextensions} that $\{F_0^nQ, N\}$ is a closed subset for any infinite-dimensional indecomposable pure-injective module over $R_n$. Thus $\{M\in\Zg_\mcal{A} | \ M|G \text{ or } M|N\otimes\widehat{M}\oplus \bigoplus_{i=1}^nP^i \}$ is the closure of $N\otimes\widehat{M}\oplus \bigoplus_{i=1}^nP^i$ for any infinite-dimensional indecomposable pure-injective $N$ over $R_n$.
\end{proof}

\section{Short embeddings}\label{shortembeddings}

In this section we will introduce a special class of embeddings, and then use this notion to investigate the endomorphism rings of direct limits along rays in ray and coray tubes.

\begin{definition}
Let $M,N$ be finitely presented modules. We call an embedding $i:M\hookrightarrow N$ \textbf{short} if there exists a natural number $n\in\N$ such that for all $\overline{a}\in M$, if $\phi$ generates the pp-type of $\overline{a}$ in $M$ and $\psi$ generates the pp-type of $i\overline{a}$ in $N$ then the interval $[\psi,\phi]$ is of length $\leq n$.
\end{definition}

It is tempting to believe that all irreducible embeddings are short. The following lemma and example shows that this is not the case.

\begin{lemma}
For $i\in\N$, let $M_i,N_i\in\mod-R$ be indecomposable, $g_i:M_i\rightarrow N_i$, $f_i:M_i\rightarrow M_{i+1}$ and $h_i:N_i\rightarrow N_{i+1}$.

If
\[\xymatrix@C=1cm{
  0 \ar[r] & M_i \ar[rr]^{\scriptsize{\left(
                            \begin{array}{c}
                              g_i \\
                              f_i \\
                            \end{array}
                          \right)}
  } && N_i\oplus M_{i+1} \ar[rr]^{\scriptsize{\left(
                                    \begin{array}{cc}
                                      h_i & g_{i+1} \\
                                    \end{array}
                                  \right)}} && N_{i+1} \ar[r] & 0 }\] are almost split exact
then $g_i$ is not short for all $i\in\N$.
\end{lemma}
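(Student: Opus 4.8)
The statement is that in a chain of almost split sequences of the indicated shape, none of the maps $g_i:M_i\hookrightarrow N_i$ is short. The natural strategy is to argue by contradiction: suppose $g_i$ is short for some $i$, witnessed by a bound $n\in\N$, and derive a contradiction by producing elements $\overline a$ whose pp-type jump $[\psi,\phi]$ (from the pp-type $\phi$ of $\overline a$ in $M_i$ to the pp-type $\psi$ of $g_i\overline a$ in $N_i$) has length exceeding $n$. The key is that the cokernel of $(g_i,-):(N_i,-)\to(M_i,-)$ in $(\mod-R,\Ab)^{\mathrm{fp}}$ measures exactly this jump: $g_i$ is short iff this cokernel functor is finite-length (this is the reformulation quoted in the introduction of the paper). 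So I would instead work functorially and show that $\coker((g_i,-))$ has infinite length.

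\textbf{Key steps.} First I would translate the almost split sequences into the functor category. Applying the Yoneda embedding to
\[0\to M_i\xrightarrow{(g_i,f_i)^T} N_i\oplus M_{i+1}\xrightarrow{(h_i,g_{i+1})} N_{i+1}\to 0\]
and using that almost split sequences give, after applying $(-,X)$ or $(X,-)$, the standard projective resolutions of the simple functors $S_{M_i}$ (the simple functor at $M_i$), I would identify how $\coker((g_i,-))$ sits relative to $\coker((g_{i+1},-))$. Concretely, the map $g_i$ factors through $N_i\oplus M_{i+1}$ in a way that the sequence of cokernel functors for the $g_i$ forms an increasing or telescoping chain; the almost split property guarantees that at each stage a new simple composition factor (supported at $M_{i+1}$ or $N_{i+1}$) is genuinely added, because $f_i$ is irreducible and hence $M_i$ is not a direct summand of $M_{i+1}$. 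Thus $\coker((g_i,-))$ surjects onto $\coker((g_{i+1},-))$ with nonzero kernel containing a simple functor, and iterating produces a strictly infinite filtration — contradicting finite length, hence contradicting shortness.

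Alternatively, and perhaps more transparently in pp-language: I would pick a generator $\overline a_i\in M_i$ of a pp-type and push it along the chain $f_i$, tracking that each application of an irreducible map strictly enlarges the relevant pp-interval by using that the $M_i$ are indecomposable of strictly increasing "complexity" along an infinite chain of irreducible non-isomorphisms (so no pp-type stabilises). The hypothesis that the sequences are almost split — not merely exact — is what forbids the chain from being eventually split and forces the pp-intervals $[\psi,\phi]$ to grow without bound as one moves along $i$; but since shortness demands a \emph{single} bound $n$ working for all $\overline a\in M_i$ for the \emph{fixed} index $i$, I must get the unbounded growth already within $M_i$ itself, which I would extract by choosing $\overline a$ to be (the image in $M_i$ of) an element whose preimage type under the composite $M_i\to M_{i+1}\to\cdots\to M_{i+N+1}$ only becomes "visible" after $N+1$ irreducible steps, so that $\phi$ sits $N+1$ layers above $\psi$ in the pp-lattice.

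\textbf{Main obstacle.} The delicate point is precisely this last issue of quantifier order: shortness is a statement about a uniform bound over all elements of the \emph{single} module $M_i$, so it does not suffice to show the pp-intervals grow as $i\to\infty$; I need, for each fixed $i$, elements of $M_i$ realising arbitrarily long intervals $[\psi,\phi]$. I expect the cleanest route is the functorial one — showing $\coker((g_i,-))$ has infinite length — because infinite length of a single finitely presented functor is exactly the failure of the uniform bound, and the infinite chain of almost split sequences gives an honest infinite composition series by the standard theory of the radical of the category $\mod-R$ (the maps $f_i$ lie in $\mathrm{rad}^i$ but not $\mathrm{rad}^{i+1}$, which is what keeps the filtration strict). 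Assembling that composition series carefully, and checking that no collapse occurs because the $N_i$ and $M_i$ are pairwise non-isomorphic indecomposables along the chain, is the technical heart of the argument.
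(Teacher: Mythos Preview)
Your pp-language approach is on the right track, but you have misidentified the main obstacle. You do \emph{not} need different elements $\overline a$ for different bounds $N$; a single generating tuple of $M_i$ already has an infinite interval. The paper's argument (treating $i=1$, which suffices by relabelling) takes $\overline a$ a generating tuple of $M_1$, lets $\phi_n$ generate the pp-type of $f_{n-1}\cdots f_1(\overline a)$ in $M_n$ and $\psi_n$ that of $h_{n-1}\cdots h_1 g_1(\overline a)$ in $N_n$, and shows that the formulae $\phi_n+\psi_1$ form a strictly descending chain \emph{inside the single interval} $[\psi_1,\phi_1]$. The two ingredients you are missing are: (i) because $\overline a$ generates $M_1$ one has $\psi_n=\psi_1\wedge\phi_n$ (this is \cite[1.2.28]{PSL}, using that each square is a pushout); and (ii) because each sequence is almost split, $(g_n,f_n)^T$ is a left almost split map and hence strictly increases the pp-type of the image of $\overline a$, giving $\phi_n>\phi_{n+1}+\psi_n$. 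A one-line modular-lattice computation from (i) and (ii) then yields $\phi_n+\psi_1>\phi_{n+1}+\psi_1$, so $[\psi_1,\phi_1]$ is infinite and $g_1$ is not short.

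Your functorial sketch has a more serious gap. The claimed surjection $\coker((g_i,-))\twoheadrightarrow\coker((g_{i+1},-))$ with nonzero kernel is not justified, and in fact after localising at the Serre subcategory of finite-length functors the almost-split square becomes bicartesian (the paper uses exactly this later), so these cokernels become \emph{isomorphic} there. Comparing cokernels across different $i$ therefore cannot by itself produce an infinite composition series inside any one of them. What is needed is an infinite descending chain of subfunctors \emph{within} $\coker((g_1,-))$; the chain $\phi_n+\psi_1$ above is precisely that, once translated: it is the image of $(M_n,-)$ in $(M_1,-)/\im(g_1,-)$ under the composite $(f_1\cdots f_{n-1},-)$.
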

\begin{proof}
It is enough to show that $g_1$ is not short. Let $\overline{a}$ generate $M_1$, $\phi_n$ generate the pp-type of $f_{n-1}\circ\cdots\circ f_1(\overline{a})$ and $\psi_n$ generate the pp-type of $h_{n-1}\circ\cdots\circ h_1\circ g_1(\overline{a})$. Then, since $\overline{a}$ generates $M_1$, by \cite[Lemma 1.2.28]{PSL}, $\psi_n=\psi_1\wedge\phi_n$. Thus $(\phi_{n+1}+\psi_1)\wedge\phi_n=\phi_{n+1}+\psi_n$. Since
\[\xymatrix@C=1cm{
  0 \ar[r] & M_n \ar[rr]^{\scriptsize{\left(
                            \begin{array}{c}
                              g_n \\
                              f_n \\
                            \end{array}
                          \right)}
  } && N_n\oplus M_{n+1} \ar[rr]^{\scriptsize{\left(
                                    \begin{array}{cc}
                                      h_n & g_{n+1} \\
                                    \end{array}
                                  \right)}} && N_{n+1} \ar[r] & 0 }\] is almost split exact and $g_n\circ f_{n-1}\circ\cdots\circ f_1(\overline{a})=h_n\circ\cdots\circ h_1\circ g_1(\overline{a})$, $\phi_n>\phi_{n+1}+\psi_n$. So \[(\phi_n+\psi_1)\wedge\phi_n=\phi_n>\phi_{n+1}+\psi_n=\phi_{n+1}+\psi_1\wedge\phi_n=(\phi_{n+1}+\psi_1)\wedge\phi_n.\]

  Thus $(\phi_n+\psi_1)\wedge\phi_n>(\phi_{n+1}+\psi_1)\wedge\phi_n$. Hence $\phi_n+\psi_1>\phi_{n+1}+\psi_1$.

  So \[\phi_1+\psi_1>\phi_2+\psi_1>\phi_3+\psi_1\ldots\] is a strictly decreasing sequence of pp-formulas in the interval $[\psi_1,\phi_1]$.

 \end{proof}

\begin{example}

Consider the path algebra of the quiver $\xymatrix{1 \ar@/^/[r]^\alpha \ar@/_/[r]_\beta & 2}$.

The above lemma shows that both irreducible embeddings $f,g$ of the projective $P(2)$ at vertex $2$ into the projective $P(1)$ at vertex $1$ are not short.

The pp-type of any non-zero element $m$ in $P(2)$ is generated by $e_2|x$. The pp-type of $f(m)$ is generated by $\exists y\ x=y\alpha$ and the pp-type of $g(m)$ is generated by $\exists  y \ x=y\beta$. We have the following infinite strictly descending chain of pp-$1$-formulas between $e_2|x$ and $\exists \ y x=y\beta$.
\[e_2|x>\exists y \ x=y\alpha+\exists y \ x=y\beta>\exists y_1, y_2 \ x=y_1\alpha\wedge y_1\beta=y_2\alpha+\exists y \ x=y\beta\]
\[>\exists y_1, y_2, y_3 \ x=y_1\alpha\wedge y_1\beta=y_2\alpha\wedge y_2\beta=y_3\alpha+\exists y \ x=y\beta>\ldots\]

\end{example}

\begin{remark}
If $f:M\rightarrow N$ is a short embedding and $f=gh$ then $h$ is a short embedding.
\end{remark}

\begin{lemma}
An embedding $f:M\hookrightarrow N$ is a short embedding if and only if the cokernel, $F\in (\mod-R,\Ab)^{\text{fp}}$, of
\[(N,-)\xrightarrow{(f,-)} (M,-)\rightarrow F\rightarrow 0\] is finite-length.
\end{lemma}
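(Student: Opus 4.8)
The plan is to translate the combinatorial condition defining a short embedding into the language of the functor category via the standard correspondence between pp-formulas and finitely presented functors. First I would recall that for a finitely presented module $M$ with generating tuple $\overline a$, the pp-type of $\overline a$ is generated by a pp-formula $\phi$, and the finitely presented functor $F_\phi$ attached to $\phi$ is the cokernel of $(C,-)\to(M,-)$ where $C\to M$ realises $\phi$; dually, applying the functor $(f,-):(N,-)\to(M,-)$ to a presentation and taking cokernels relates the functor $F$ in the statement to the ``quotients'' $F_\phi/F_\psi$ as $\overline a$ ranges over elements of $M$. Concretely, I would use that any element $\overline a\in M$ corresponds to a map $R^k\to M$, hence to a natural transformation $(M,-)\to(R^k,-)$, and the composite $(N,-)\xrightarrow{(f,-)}(M,-)\to(R^k,-)$ evaluated on the structure gives, after identifying $(R^k,-)$ with the forgetful functor, the subgroups $\psi(-)\subseteq\phi(-)$ where $\phi$ generates the pp-type of $\overline a$ and $\psi$ that of $f\overline a$.

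The key steps, in order, would be: (1) Show that $F$ is finite-length as a functor if and only if there is a uniform bound $n$ on the lengths of the intervals $[\psi,\phi]$ arising from single generators $\overline a$ of $M$. For the forward direction, if $F$ has length $n$, then for each $\overline a\in M$ the induced map exhibits the interval $[\psi_{\overline a},\phi_{\overline a}]$ as (anti-isomorphic to) a subquotient of the lattice of subfunctors of $F$, whose length is at most $n$; the point is that the lattice of pp-pairs $[\psi,\phi]$ embeds into the interval $[0,F]$ in the subobject lattice of the functor category because $F_\psi/F_\phi$-type quotients correspond to subfunctors of $F$. (2) For the converse, suppose every such interval has length $\leq n$. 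One must show $F$ itself has length $\leq n$ (or at least finite length). Here I would use that $F$, being finitely presented, is generated by finitely many objects, and that a composition series of $F$ refines into pieces each of which is a simple functor, each of which ``appears'' in some $F_{\phi}/F_{\psi}$ for a suitable pp-pair coming from an element of $M$ — by taking $\overline a$ to be (a component of) the generating tuple of $M$ itself; since $M$ is finitely presented, finitely many generators suffice, and the functor $F$ is built from the finitely many pp-pairs $[\psi_i,\phi_i]$ attached to these generators, so $\ell(F)\le\sum_i \ell[\psi_i,\phi_i]$ is finite.

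The main obstacle I anticipate is step (2): making precise the claim that the single functor $F$ is controlled by the finitely many pp-pairs coming from a fixed finite generating set of $M$, rather than by the a priori infinite family of all $\overline a\in M$. The resolution is that the definition of short embedding only requires a bound over all $\overline a\in M$, but $F$ depends only on the finite presentation $R^m\xrightarrow{h} M$ (equivalently on $f$ composed with a fixed epimorphism $R^m\twoheadrightarrow M$): tracking through the presentation $(N,-)\to(M,-)\to F\to 0$, one sees $F$ is a quotient of $(M,-)$, and $(M,-)$ is itself a quotient of $(R^m,-)$, so $F$ is a subquotient of $(R^m,-)^{?}$ — more precisely, picking the generating tuple $\overline a=(a_1,\dots,a_m)$ of $M$ of length $m$, the pp-pair $[\psi,\phi]$ it generates in $M^m$ versus $N^m$ gives a functor $F_\phi/F_\psi$ which surjects onto $F$ (since a generating tuple's pp-type controls all homomorphisms out of $M$). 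Then $\ell(F)\le\ell[\psi,\phi]\le n'$ for the bound $n'$ applying to $m$-tuples, which by the standard reduction (the $m$-tuple case is controlled by the $1$-tuple case up to a bounded factor, or one simply extends the definition to tuples) is finite. I would cite \cite[Section 1.2]{PSL} and \cite[10.2.30]{PSL} for the dictionary between pp-pairs and finitely presented functors throughout.
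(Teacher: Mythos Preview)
Your proposal is correct and follows essentially the same strategy as the paper: use the dictionary between pp-pairs and finitely presented functors (\cite[1.2.18, 10.2.30]{PSL}), and relate $F$ to the functors $F_{\phi/\psi}$ attached to tuples in $M$. The paper's execution is slightly cleaner than yours in one respect: for a generating tuple $\overline a$ of $M$ with $\phi,\psi$ generating the pp-types of $\overline a$ and $f(\overline a)$, one has an \emph{isomorphism} $F\cong F_{\phi/\psi}$, not merely a surjection. This disposes of your ``main obstacle'' in one line: the direction ``short $\Rightarrow$ $F$ finite-length'' is immediate once you observe that $F$ is literally $F_{\phi/\psi}$ for a single well-chosen tuple, so there is no need to sum over several generators or worry about controlling $F$ by a finite family of pp-pairs. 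For the other direction the paper does exactly what you outline: for an arbitrary tuple $\overline a$, a short diagram with the quotients $M/\langle\overline a\rangle$ and $N/\langle f(\overline a)\rangle$ shows $F_{\phi/\psi}$ is an epimorphic image of $F$, hence of length $\le\ell(F)$, giving the required uniform bound.
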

\begin{proof}
Suppose that $f$ is a short embedding. Let $\overline{a}$ generate $M$ and $\overline{b}:=f(\overline{a})$. Let $\phi$ generate the pp-type of $\overline{a}$ in $M$ and $\psi$ generate the pp-type of $\overline{b}$ in $N$. Then, by \cite[1.2.18]{PSL} and \cite[proof of 10.2.30]{PSL}, $F\cong F_{\phi/\psi}$, which is finite-length since $f$ is short.

Conversely, suppose that $F$ is finite-length. Let $\overline{a}\in M$ and $\overline{b}:=f(\overline{a})$. Let $\phi$ generate the pp-type of $\overline{a}$ in $M$ and $\psi$ generate the pp-type of $\overline{b}$ in $N$.

Then

\xymatrix@=20pt{
  (N/\langle \overline{b}\rangle,-) \ar[d] \ar[r]^{(f',-)} &   (M/\langle \overline{a}\rangle,-) \ar[d] &   &  \\
  (N,-) \ar[d] \ar[r]^{(f,-)} & (M,-) \ar[d] \ar[r] & F \ar[d] \ar[r] & 0  \\
  F_{\psi} \ar[d] \ar[r] & F_{\phi} \ar[d] \ar[r] & F_{\phi/\psi}  \ar[r] & 0  \\
  0 & 0 &   &   }

So $F_{\phi/\psi}$ is an epimorphic image of $F$ and hence finite-length. Thus $[\psi,\phi]$ is finite-length.

\end{proof}

\begin{remark}
An embedding $f:M\rightarrow N$ is short if the morphism $(f,-)\in (\mod-R,\Ab)^{\text{fp}}$ is an epimorphism after localising at the serre subcategory of finite-length objects.
\end{remark}

\begin{remark}
Let $\xymatrix@C=0.5cm{
  0 \ar[r] & A \ar[rr]^{f} && B \ar[rr]^{g} && C \ar[r] & 0 }$ be almost split exact.
After localising at the Serre subcategory of finite-length objects, the sequence
\[\xymatrix@C=0.5cm{
  0 \ar[r] & (C,-) \ar[rr]^{(g,-)} && (B,-) \ar[rr]^{(f,-)} && (A,-) \ar[r] & 0 }\]
is a short exact sequence.
\end{remark}

\begin{lemma}\label{shorttransfersinglealmostsplit}
Suppose that $i:A\rightarrow B_1$ is an embedding, $\mu:B_2\rightarrow C$ is a short embedding (of length $n$) and
\[\xymatrix@C=1cm{
  0 \ar[r] & A \ar[rr]^{\scriptsize{\left(
                          \begin{array}{c}
                            i \\
                            \pi \\
                          \end{array}
                        \right)}
  } && B_1\oplus B_2 \ar[rr]^{\scriptsize{\left(
                                \begin{array}{cc}
                                  \gamma & \mu \\
                                \end{array}
                              \right)}
  } && C \ar[r] & 0 }\] is almost split exact. Then $i$ is short (of length $\leq n+1$) .
\end{lemma}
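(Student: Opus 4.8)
The plan is to use the functorial characterisation of short embeddings: $i:A\to B_1$ is short of length $\leq n+1$ if and only if the cokernel $F$ of $(B_1,-)\xrightarrow{(i,-)}(A,-)$ is finite-length of length $\leq n+1$. So I would apply the contravariant Hom-functor $(-,-)$ to the almost split sequence and track finite-length cokernels. Applying $(-,X)$ (i.e. working in $(\mod-R,\Ab)^{\text{fp}}$) to the given short exact sequence yields an exact sequence
\[\xymatrix@C=0.7cm{
  0 \ar[r] & (C,-) \ar[rr]^-{\scriptsize\left(\begin{array}{c}\gamma\\ \mu\end{array}\right)^{\!*}} && (B_1,-)\oplus(B_2,-) \ar[rr]^-{\scriptsize\left(\begin{array}{cc}i & \pi\end{array}\right)^{\!*}} && (A,-) \ar[r] & L \ar[r] & 0 }\]
where $L$ is the cokernel, and since the sequence is almost split, exactness on the right in the functor category fails only up to a finite-length defect: more precisely, $(f,-)\colon (B,-)\to(A,-)$ becomes surjective after localising at the Serre subcategory of finite-length functors, as recorded in the remark preceding this lemma. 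Hence $L$ is finite-length, and in fact $L\cong (A,-)/\big((i,-)(B_1,-)+(\pi,-)(B_2,-)\big)$ is a quotient of $(A,-)/(i,-)(B_1,-)=F_i$, the cokernel defining shortness of $i$.

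Next I would estimate the length. From the surjection $F_i\twoheadrightarrow L$ we only get $L$ finite-length, which is the wrong direction; instead I want to bound $F_i$. The key is that the image of $(\pi,-)\colon (B_2,-)\to (A,-)$, call it $I$, satisfies: modulo $I$, the map $(i,-)$ is already "essentially surjective" because of the almost split property — the obstruction to surjectivity of $\big(\begin{smallmatrix}i&\pi\end{smallmatrix}\big)^{*}$ is finite-length. So $F_i = (A,-)/(i,-)(B_1,-)$ surjects onto $L$ with kernel a quotient of $I/(I\cap (i,-)(B_1,-))$, which is a quotient of the image of $(B_2,-)\to(A,-)$. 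Now factor $\pi$ through $\mu$: since $\mu\colon B_2\to C$ is a short embedding, its cokernel functor $(A,-)$... more carefully, I would use the commutative square relating $\pi$, $\mu$ and the composite $A\to C$ (namely $\mu\pi = $ the relevant component, up to $\gamma i$), to show that the cokernel of $(\pi,-)$ restricted appropriately is a subquotient of the cokernel of $(\mu,-)$, which has length $\leq n$ by hypothesis. Combining, $F_i$ has a filtration with one quotient isomorphic to $L$ (length $\leq$ length of $F_\mu\leq n$ via the almost-split comparison) and another controlled by the finite-length almost-split defect, which contributes length exactly $1$ (the defect functor of an almost split sequence is simple). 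This gives $\mathrm{length}(F_i)\leq n+1$.

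I expect the main obstacle to be the bookkeeping in the second paragraph: carefully identifying which finite-length functor controls the gap between surjectivity of $(i,-)$ and surjectivity of $\big(\begin{smallmatrix}i&\pi\end{smallmatrix}\big)^{*}$, and showing this gap has length exactly one (it should be the simple functor $S$ with $(A,-)/\mathrm{rad}\,(A,-)\cong S$ appearing as the cokernel of $(g,-)\colon(B,-)\to(A,-)$ for the almost split sequence $0\to A\to B\to C\to 0$, localised away — but here $B = B_1\oplus B_2$ is only part of the story because we have also pre-composed with the short embedding $\mu$). The cleanest route is probably to first handle the pure almost split case ($B_2 = C$, $\mu=\mathrm{id}$, $n=0$) to isolate the "length one" phenomenon, then bolt on the short embedding $\mu$ by a diagram chase analogous to the one in the proof of the preceding lemma, adding its length $n$ to the count. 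One must be slightly careful that "short of length $\leq n$" is stated pointwise (for each $\bar a$) whereas the functorial length is a single global bound; the equivalence lemma already proved handles this, so I would simply cite it rather than re-derive it.
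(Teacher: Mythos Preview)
Your approach is correct in outline and, once the bookkeeping is tightened, actually yields the explicit length bound $\leq n+1$ stated in the lemma. The key computation you are circling around is this: from the relation $\gamma i + \mu\pi = 0$ one gets $(\pi,-)\circ(\mu,-) = (i,-)\circ(-\gamma,-)$, so the composite $(C,-)\xrightarrow{(\mu,-)}(B_2,-)\xrightarrow{(\pi,-)}(A,-)$ lands in the image of $(i,-)$. Hence $(\pi,-)$ induces a map $F_\mu = (B_2,-)/(\mu,-)(C,-)\to (A,-)/(i,-)(B_1,-)=F_i$ whose image is exactly the kernel of the surjection $F_i\twoheadrightarrow L$. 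Since $F_\mu$ has length $\leq n$ and $L$ is simple (this is the defining property of an almost split sequence in the functor category), $F_i$ has length $\leq n+1$. Your phrase ``the cokernel of $(\pi,-)$ restricted appropriately is a subquotient of the cokernel of $(\mu,-)$'' should be replaced by this precise statement; $\pi$ itself need not be an embedding, so speaking of its cokernel functor is a red herring.

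The paper takes a different and somewhat slicker route: it localises $(\mod-R,\Ab)^{\text{fp}}$ at the Serre subcategory $\mcal{S}_0$ of finite-length functors. After localisation the almost split sequence becomes genuinely short exact on representables, so the square with vertices $(C,-),(B_1,-),(B_2,-),(A,-)$ is simultaneously a pushout and a pullback in the quotient category. Since $\mu$ is short, $(\mu,-)$ is an epimorphism in $(\mod-R,\Ab)^{\text{fp}}/\mcal{S}_0$; pushing out along an epimorphism yields an epimorphism, so $(i,-)$ is epi in the quotient, i.e.\ $i$ is short. This argument is cleaner and generalises immediately to the multi-step version in the next lemma, but as written it does not track the numerical bound $n+1$ --- so your more pedestrian filtration argument, once cleaned up as above, actually establishes slightly more than the paper's own proof.
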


\begin{proof}
Since \[\xymatrix@C=1cm{
  0 \ar[r] & A \ar[rr]^{\scriptsize{\left(
                          \begin{array}{c}
                            i \\
                            \pi \\
                          \end{array}
                        \right)}
  } && B_1\oplus B_2 \ar[rr]^{\scriptsize{\left(
                                \begin{array}{cc}
                                  \gamma & \mu \\
                                \end{array}
                              \right)}
  } && C \ar[r] & 0 }\] is almost split exact, the cokernel $F\in (\mod-R,\Ab)^{\text{fp}}$ of
  \[(B_1\oplus B_2,-)\xrightarrow{\scriptsize{(\left(
                          \begin{array}{c}
                            i \\
                            \pi \\
                          \end{array}
                        \right),-)}}(A,-)\rightarrow F\rightarrow 0\] is simple. Let $\mcal{S}_0$ be the serre subcategory of $(\mod-R,\Ab)^{\text{fp}}$ generated by the simple functors.

Thus, since serre localisation is exact and the contravariant Yoneda embedding of $\mod-R$ into $(\mod-R,\Ab)^{\text{fp}}$ is left-exact,
\[\xymatrix@C=1cm{
  0 \ar[r] & (C,-) \ar[rr]^{\scriptsize{(\left(
                                \begin{array}{cc}
                                  \gamma & \mu \\
                                \end{array}
                              \right),-)}
  } && B_1\oplus B_2 \ar[rr]^{\scriptsize{(\left(
                          \begin{array}{c}
                            i \\
                            \pi \\
                          \end{array}
                        \right),-)}
  } && (A,-) \ar[r] & 0 }\] is exact in $(\mod-R,\Ab)^{\text{fp}}/\mcal{S}_0$. Thus the following diagram is both a pushout and a pullback in $(\mod-R,\Ab)^{\text{fp}}/\mcal{S}_0$.
\[\xymatrix@=20pt{
  (C,-) \ar[d]_{(\mu,-)} \ar[r]^{(\gamma,-)} & (B_1,-) \ar[d]^{(i,-)} \\
  (B_2,-) \ar[r]^{(\pi,-)} & (A,-)   }\]
Since $\mu:B_2\rightarrow C$ is a short embedding, $(\mu,-)$ is an epimorphism in $(\mod-R,\Ab)^{\text{fp}}/\mcal{S}_0$. Thus $(i,-)$ is also an epimorphism in $(\mod-R,\Ab)^{\text{fp}}/\mcal{S}_0$ and hence $i$ is a short embedding.

\end{proof}

\begin{lemma}\label{generalisedshorttransfersinglealmostsplit}
Suppose that $L_1,\ldots,L_{n+1}$ and $M_1,\ldots,M_{n+1}$ are indecomposable finite-dimensional modules and that for $1\leq j\leq n$
\[\xymatrix@C=1cm{
  0 \ar[r] & M_j \ar[rr]^{\scriptsize{\left(
                          \begin{array}{c}
                            i_j \\
                            \pi_j \\
                          \end{array}
                        \right)}} && M_{j+1}\oplus L_j \ar[rr]^{\scriptsize{\left(
                                \begin{array}{cc}
                                  \pi_{j+1} & \mu_j \\
                                \end{array}
                              \right)}} && L_{j+1} \ar[r] & 0 }\] is almost spilt exact.

If $\mu_n\circ\ldots\circ\mu_1$ is short then $i_n\circ\ldots\circ i_1$ is short.
\end{lemma}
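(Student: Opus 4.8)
The plan is to mimic the proof of Lemma~\ref{shorttransfersinglealmostsplit}, but to apply the functor-category argument to the whole ladder of almost split sequences at once and then to glue the resulting squares. Throughout, let $\mcal{S}_0$ be the Serre subcategory of $(\mod-R,\Ab)^{\text{fp}}$ generated by the simple functors and put $\mcal{A}:=(\mod-R,\Ab)^{\text{fp}}/\mcal{S}_0$; recall that an embedding $f$ is short exactly when $(f,-)$ becomes an epimorphism in $\mcal{A}$.

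First I would run the argument from the proof of Lemma~\ref{shorttransfersinglealmostsplit} on each of the $n$ almost split sequences separately. For fixed $j$, the cokernel of $(\binom{i_j}{\pi_j},-)$ is the simple functor attached to $M_j$, so after localising at $\mcal{S}_0$ and using left-exactness of the contravariant Yoneda embedding, the sequence $0\to(L_{j+1},-)\to(M_{j+1},-)\oplus(L_j,-)\to(M_j,-)\to 0$ is short exact in $\mcal{A}$; equivalently the square
\[\xymatrix@=16pt{(L_{j+1},-)\ar[d]_{(\mu_j,-)}\ar[r]^{(\pi_{j+1},-)} & (M_{j+1},-)\ar[d]^{(i_j,-)} \\ (L_j,-)\ar[r]^{(\pi_j,-)} & (M_j,-)}\]
is a pushout (indeed a pushout and a pullback) in $\mcal{A}$, and this uses nothing about shortness. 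Then I would observe that consecutive squares glue along an edge: the bottom row $(L_j,-)\to(M_j,-)$ of the $j$-th square is the top row of the $(j-1)$-th. By the pasting lemma for pushouts, stacking the squares for $j=n,n-1,\dots,1$ yields a pushout square in $\mcal{A}$ whose left-hand edge is the composite $(L_{n+1},-)\xrightarrow{(\mu_n,-)}\cdots\xrightarrow{(\mu_1,-)}(L_1,-)$, which by contravariance of $(-,-)$ on $\mod-R$ is $(\mu_n\circ\cdots\circ\mu_1,-)$, and whose right-hand edge is likewise $(i_n\circ\cdots\circ i_1,-)$. Finally, since $\mu_n\circ\cdots\circ\mu_1$ is short, $(\mu_n\circ\cdots\circ\mu_1,-)$ is an epimorphism in $\mcal{A}$, and a pushout of an epimorphism is an epimorphism, so $(i_n\circ\cdots\circ i_1,-)$ is an epimorphism in $\mcal{A}$; hence $i_n\circ\cdots\circ i_1$ is short.

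The step I expect to need the most care is purely organisational: keeping track of variances so that the stacked squares genuinely assemble the composites $\mu_n\circ\cdots\circ\mu_1$ and $i_n\circ\cdots\circ i_1$, and checking that the corners match along the ladder. It is worth recording why the obvious alternative — induct on $n$, peel off one almost split sequence, and invoke Lemma~\ref{shorttransfersinglealmostsplit} — does not go through directly: from shortness of $\mu_n\circ\cdots\circ\mu_1$, the remark that a short $f=gh$ forces $h$ short only yields that the \emph{initial} composites $\mu_k\circ\cdots\circ\mu_1$ are short (hence, by induction, that $i_{n-1}\circ\cdots\circ i_1$ is short), not that the single map $\mu_n$ is short, which is exactly what Lemma~\ref{shorttransfersinglealmostsplit} would need in order to conclude $i_n$ is short. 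The pasting argument sidesteps this because it invokes shortness only of the one composite $\mu_n\circ\cdots\circ\mu_1$.
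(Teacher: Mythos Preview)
Your argument is correct and is essentially identical to the paper's proof: both work in $(\mod-R,\Ab)^{\text{fp}}/\mcal{S}_0$, show that each almost split sequence yields a pushout--pullback square there, paste the squares along the shared edges $(L_j,-)\xrightarrow{(\pi_j,-)}(M_j,-)$ to obtain a single pushout with vertical maps $(\mu_n\circ\cdots\circ\mu_1,-)$ and $(i_n\circ\cdots\circ i_1,-)$, and then transfer the epimorphism across the pushout. Your closing remark explaining why one cannot simply induct via Lemma~\ref{shorttransfersinglealmostsplit} is a useful addition but not part of the paper's text.
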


\begin{proof}
We need to show that if $(\mu_n\circ\ldots\circ\mu_1,-)$ is an epimorphism in $(\mod-R,\Ab)^{\text{fp}}/\mcal{S}_0$ then $(i_n\circ\ldots\circ i_1,-)$ is an epimorphism in $(\mod-R,\Ab)^{\text{fp}}/\mcal{S}_0$. Since each sequence in the lemma is almost split exact,

\[\xymatrix@R=20pt@C=40pt{
  (L_{j+1},-) \ar[d]_{(\mu_j,-)} \ar[r]^{(\pi_{j+1},-)} & (M_{j+1},-) \ar[d]^{(i_j,-)} \\
  (L_j,-) \ar[r]^{(\pi_j,-)} & (M_j,-)   }\] is a pushout and a pullback square in $(\mod-R,\Ab)^{\text{fp}}/\mcal{S}_0$ for all $1\leq j\leq n$. Putting these pushout and pullback squares along side each other, we get that
 \[\xymatrix@R=20pt@C=40pt{
  (L_{n+1},-) \ar[d]_{(\mu_n\circ\ldots\circ\mu_1,-)} \ar[r]^{(\pi_{n+1},-)} & (M_{n+1},-) \ar[d]^{(i_n\circ\ldots\circ i_1,-)} \\
  (L_1,-) \ar[r]^{(\pi_1,-)} & (M_1,-)   }\]is a pushout and a pullback square in $(\mod-R,\Ab)^{\text{fp}}/\mcal{S}_0$. Thus if $(\mu_n\circ\ldots\circ\mu_1,-)$ is an epimorphism in $(\mod-R,\Ab)^{\text{fp}}/\mcal{S}_0$ then $(i_n\circ\ldots\circ i_1,-)$ is an epimorphism in $(\mod-R,\Ab)^{\text{fp}}/\mcal{S}_0$.
\end{proof}

For an artin algebra $R$ and $A,B\in\mod-R$, the \textbf{radical} of $\mod-R$ is the two-sided ideal defined by
\[\rad(A,B):=\{f\in \Hom(A,B) \st 1_A-g\circ f \text{  is an isomorphism for any } g\in\Hom(B,A) \}.\]

For any natural number $n>1$, $\rad^n(A,B)$ is the set of sums of morphisms $g\circ h$ where $C\in\mod-R$, $g\in \rad(C,B)$ and $h\in \rad^{n-1}(A,C)$.

The $\omega$-radical of $\mod-R$ is the two-sided ideal defined by
\[\rad^\omega(A,B):=\cap_{i\in\N}\rad^i(A,B).\]

We will show that short embeddings are not in the $\omega$-radical.

\begin{lemma}\label{radicalmapsincreasepptype}
Let $R$ be an artin algebra, $A,B\in \mod-R$ and $f\in\Hom(A,B)$. The following statements are equivalent.
\begin{enumerate}
\item $f\in\rad(A,B)$
\item For all non-zero $a\in A$, if $\phi$ generates the pp-type of a non-zero element $a\in A$ and $\psi$ generates the pp-type of $f(a)$ in $B$ then $\psi<\phi$.

\end{enumerate}
\end{lemma}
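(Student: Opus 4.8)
The plan is to prove the two implications separately, using the characterization of the radical in terms of non-isomorphisms together with the fact that over an artin algebra the pp-type of an element of a finitely presented module is generated by a single pp-formula, and that $a \in \phi(A)$ iff the generating formula of $\mathrm{pp}^A(a)$ implies $\phi$. Throughout I will use that for $A$ indecomposable with local endomorphism ring the radical $\rad(A,A)$ is exactly the (Jacobson) radical of $\End(A)$, and more generally the standard characterization: $f \in \rad(A,B)$ iff $f$ restricted to each indecomposable summand of $A$ composed with the projection onto each indecomposable summand of $B$ is a non-isomorphism; equivalently $f \in \rad(A,B)$ iff for no pair of indecomposable summands $A' \le A$, $B' \le B$ is the induced map $A' \to B'$ an isomorphism.

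\emph{(2) $\Rightarrow$ (1).} I would prove the contrapositive. Suppose $f \notin \rad(A,B)$, so there is $g \in \Hom(B,A)$ with $1_A - g f$ not an isomorphism — equivalently (passing to an indecomposable summand, or using Fitting's lemma on the artinian-noetherian module $A$) there are decompositions exhibiting an indecomposable summand $A'$ of $A$ on which $f$ acts as a split monomorphism onto an isomorphic summand of $B$. Pick $0 \neq a \in A'$; then $a$ and $f(a)$ generate the same pp-type, because a split embedding is pure and hence preserves pp-types in both directions, so the generators $\phi$ of $\mathrm{pp}^A(a)$ and $\psi$ of $\mathrm{pp}^B(f(a))$ satisfy $\psi = \phi$ (up to $T_R$-equivalence), contradicting $\psi < \phi$. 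The one point needing care here is producing the honest split summand from the mere failure of $1_A - gf$ to be invertible; this is where I expect to lean on $\mod\text{-}R$ being Krull–Schmidt and on the standard radical calculus for artin algebras.

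\emph{(1) $\Rightarrow$ (2).} Assume $f \in \rad(A,B)$ and let $0 \neq a \in A$ with $\phi$ generating $\mathrm{pp}^A(a)$ and $\psi$ generating $\mathrm{pp}^B(f(a))$. Since $f(a) \in \psi(B)$ is the image of $a$ and pp-formulas are preserved by \emph{any} homomorphism, $a \in \phi(A)$ forces $f(a) \in \phi(B)$, hence $\psi \le \phi$ (because $\psi$ is the least pp-formula satisfied by $f(a)$). It remains to rule out $\psi = \phi$. If $\psi = \phi$, then consider the finitely presented module $C$ together with $\overline c \in C$ such that $\phi$ generates the pp-type of $\overline c$ (which exists by the correspondence in \cite[Section 1.2.2]{PSL}); the assignments $\overline c \mapsto a$ and $\overline c \mapsto f(a)$ define maps $p : C \to A$, $q : C \to B$ with $q = f p$, and $p$ is surjective onto $\langle a \rangle$ while the induced map $\langle a\rangle \to \langle f(a)\rangle$ is an isomorphism of modules since $a,f(a)$ have the same pp-type (so the same image of $C$). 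Restricting to an indecomposable summand $A'$ of $A$ through which $p$ hits a generator, $f|_{A'}$ composed with projection to a suitable indecomposable summand of $B$ becomes an isomorphism, contradicting $f \in \rad(A,B)$. The main obstacle is again making this last ``same pp-type $\Rightarrow$ split isomorphism onto a summand'' step precise for artin algebras: concretely, I would invoke that when $A$ is finitely presented, equality of the pp-type of $a \in A$ and of $f(a) \in B$ means $f$ induces a pure embedding on the submodule generated by $a$, and over an artin algebra pure submodules of finite length modules are direct summands, so $f$ cannot lie in the radical.

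Overall the two directions are essentially dual, and the content is entirely in translating between ``$f$ strictly enlarges every pp-type'' and ``$f$ induces no isomorphism between indecomposable summands''; once the dictionary between pp-types and the Auslander–Reiten/radical structure for artin algebras is set up, each implication is short. I expect the write-up to cite \cite[1.2.28]{PSL} (or the cited material in Section 1.2.2) for the existence and behaviour of pp-type generators, and the standard radical calculus for $\mod\text{-}R$ over an artin algebra.
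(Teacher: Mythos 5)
Your direction (2)$\Rightarrow$(1) is essentially sound and runs parallel to the paper's: you extract from $f\notin\rad(A,B)$ an indecomposable summand $A'$ of $A$ on which $f$ is a split monomorphism and use that split (hence pure) embeddings preserve pp-types, whereas the paper argues via \cite[4.3.45]{PSL} that $1_{A'}-g f'$ is an isomorphism for every $g$. Either route works.

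The direction (1)$\Rightarrow$(2), however, has a genuine gap at exactly the step you flag as needing care. Your closing claim --- that equality of $\pp^A(a)$ and $\pp^B(f(a))$ makes $f$ a pure embedding on \emph{the submodule generated by} $a$, which is then pure and hence a direct summand --- is false. The submodule $\langle a\rangle$ need not be pure in $A$, and $f|_{\langle a\rangle}$ need not be a pure embedding even when $f$ is the identity: for $A=B=\Z/4$ and $a=2$ one has $\pp^A(a)=\pp^B(f(a))$ trivially, yet the inclusion $\langle 2\rangle\hookrightarrow\Z/4$ is not pure, since $2\mid x$ is satisfied by $2$ in $\Z/4$ but not in $\langle 2\rangle$. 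The point is that the hypothesis concerns the pp-type computed in the ambient module, and the witnesses for the existential quantifiers live outside $\langle a\rangle$. The free-realization detour via $C$, $p$, $q$ suffers from the same problem: an isomorphism $\langle a\rangle\to\langle f(a)\rangle$ of abstract modules does not yield an isomorphism between direct summands of $A$ and $B$, and nothing forces the image of $p$ into a single indecomposable summand. The object you need is the pure-injective hull $H(a)$ of $a$ \emph{inside} $A$: since $A$ has finite length, $A=A'\oplus H(a)$, the pp-type of $a$ is the same whether computed in $H(a)$ or in $A$, and \cite[4.3.42]{PSL} gives that a morphism out of $H(a)$ preserving the pp-type of $a$ is a pure embedding, hence split because $H(a)$ is pure-injective; this contradicts $f\in\rad(A,B)$ precisely because $H(a)$, unlike $\langle a\rangle$, really is a summand of $A$. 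This is the paper's argument, and without replacing $\langle a\rangle$ by $H(a)$ your proof does not close.
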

\begin{proof}
(1)$\Rightarrow$(2) Suppose that $f\in \rad(A,B)$ and $a\in A$ is non-zero. Suppose for a contradiction that $\phi=\psi$. Let $A'\oplus H(a)=A$ where $H(a)$ is the pure-injective hull of $a$ in $A$, see \cite[pg 153]{PSL}. Since $f\in \rad(A,B)$, $f'=f|_{H(a)}\in\rad(H(a),B)$. By \cite[4.3.42]{PSL}, since $\text{pp}^{H(a)}(a)=\text{pp}^B(f'(a))$, $f'$ is a pure-embedding. Since $H(a)$ is pure-injective, $f'$ is split. Thus $f'\notin \rad(A,B)$.

(2)$\Rightarrow$(1) Suppose that $f\notin \rad(A,B)$. Then there exists $A'$ a direct summand of $A$ such that $A'$ is indecomposable and $f|_{A'}=f':A'\rightarrow B$ is not in $\rad(A',B)$. Let $g\in\Hom(B,A')$. Take $a\in A'$ such that $gf'(a)\neq 0$ if such an element exists. The pp-type of $(1_{A'}-g\circ f')(a)$ is equal to the pp-type of $a$ since $f'$ strictly increases the pp-types of $a$. Since $A'$ is indecomposable, this implies, by \cite[4.3.45]{PSL}, that $1_{A'}-g\circ f'$ is an isomorphism. Thus $f'\in\rad(A',B)$.
\end{proof}

The following corollary implies that short-embeddings are not in the $\omega$-radical.

\begin{cor}\label{omegaradincreasespptype}
Suppose $X,Y\in\mod-R$ and $f:X\rightarrow Y\in \rad^\omega(X,Y)$. If $a\in X$ is non-zero, $\phi$ generates the pp-type of $a$ in $X$ and $\psi$ generates the pp-type of $f(a)$ in $Y$ then either $f(a)=0$ or $[\psi,\phi]$ is infinite-length.
\end{cor}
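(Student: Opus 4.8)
The plan is to deduce the statement from Lemma \ref{radicalmapsincreasepptype} by reducing to a single composition of radical maps; the only real work is handling the fact that $\rad^n$ consists of \emph{sums} of $n$-fold compositions rather than single compositions.

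\textbf{Step 1 (single compositions).} Suppose $g\colon A\to B$ factors as $g=g_n\circ\cdots\circ g_1$ with each $g_i$ in the radical of $\mod-R$ (here the $g_i$ may be maps between decomposable modules, since a block-diagonal matrix of radical maps is again in the radical). Let $a\in A$ with $g(a)\neq 0$, put $a_0=a$ and $a_i=g_i(a_{i-1})$, and let $\phi_i$ generate the pp-type of $a_i$. If some $a_i$ were zero then $a_n=g(a)$ would be zero, so all $a_i$ are non-zero; hence Lemma \ref{radicalmapsincreasepptype} applies at each stage and gives $\phi_0>\phi_1>\cdots>\phi_n$. Since $\phi_0$ generates the pp-type of $a$ and $\phi_n$ generates the pp-type of $g(a)$, this says exactly that the relevant interval contains a chain of length $n$, i.e.\ has length $\geq n$.

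\textbf{Step 2 (reducing $\rad^\omega$ to Step 1).} Let $f\in\rad^\omega(X,Y)$ with $f(a)\neq 0$, and fix $n\in\N$; it suffices to show $[\psi,\phi]$ has length $\geq n$. Unwinding the definition of $\rad^n$ by induction, $f\in\rad^n(X,Y)$ is a finite sum $f=\sum_{j=1}^k f_n^{(j)}\circ\cdots\circ f_1^{(j)}$ of $n$-fold compositions of radical maps. Let $u\colon X\to X^k$ be the diagonal, $v\colon Y^k\to Y$ the codiagonal, and
\[
\rho=\bigl(\textstyle\bigoplus_{j} f_n^{(j)}\bigr)\circ\cdots\circ\bigl(\textstyle\bigoplus_{j} f_1^{(j)}\bigr)\colon X^k\longrightarrow Y^k,
\]
which is a composition of $n$ radical maps and satisfies $f=v\circ\rho\circ u$. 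Since pp-$1$-formulas commute with finite direct products (indeed $\theta(X^k)=\theta(X)^k$), $u(a)=(a,\dots,a)$ has the same pp-type as $a$, generated by $\phi$. As $f(a)\neq 0$ we have $\rho(u(a))\neq 0$, so Step 1 applied to $\rho$ shows: if $\chi$ generates the pp-type of $\rho(u(a))$ in $Y^k$ then $[\chi,\phi]$ has length $\geq n$. Finally $f(a)=v(\rho(u(a)))$ is the sum of the coordinates of $\rho(u(a))$, and each of these lies in $\chi(Y)$, so $f(a)\in\chi(Y)$ and hence $\psi\leq\chi$. Therefore $[\chi,\phi]\subseteq[\psi,\phi]$ and $[\psi,\phi]$ has length $\geq n$, as required.

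The main obstacle is precisely the passage in Step 2: viewing an element of $\rad^\omega$ naively as an infinite composition of radical maps fails, because $\rad^n$ is only closed under \emph{sums} of $n$-fold compositions, and the pp-type of a sum can sit strictly below the pp-type of every summand, so the summand-by-summand intervals of Step 1 need not be comparable with $[\psi,\phi]$. The diagonal--codiagonal device converts the sum into a single composition at the cost of enlarging the source and target; this is harmless since $u$ preserves pp-types and $v$ can only move them downwards, which is the direction we need.
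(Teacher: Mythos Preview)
Your proof is correct. The paper states the corollary without proof, leaving it as an immediate consequence of Lemma~\ref{radicalmapsincreasepptype}; your diagonal--codiagonal trick is exactly the right way to handle the passage from sums of $n$-fold compositions to a single composition, and Lemma~\ref{radicalmapsincreasepptype} is stated for arbitrary $A,B\in\mod\text{-}R$ (not just indecomposables), so it applies directly to your block-diagonal maps $\bigoplus_j f_i^{(j)}$.

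A minor variant, perhaps marginally more economical, is to argue by induction on $n$: write $f\in\rad^n(X,Y)$ as $f=\sum_j g_j\circ h_j$ with $g_j\in\rad(C_j,Y)$ and $h_j\in\rad^{n-1}(X,C_j)$, then set $h=(h_j)_j\colon X\to\bigoplus_j C_j$ and $g=(g_j)_j\colon\bigoplus_j C_j\to Y$, so $f=g\circ h$ with $h\in\rad^{n-1}$ and $g\in\rad$; apply the inductive hypothesis to $h$ and then Lemma~\ref{radicalmapsincreasepptype} once to $g$. This avoids fully unwinding the recursion but is the same idea as yours.
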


\begin{lemma}\label{descriptionofradworadomega}
Let $A,B$ be indecomposable and $f\in \rad(A,B)\backslash\rad^\omega(A,B)$. Then $f=u_1+u_2+\ldots+u_m+v$ where each $u_i$ is either zero or a sum of compositions of $i$ irreducible maps between indecomposable modules and $v\in \rad^\omega(A,B)$.
\end{lemma}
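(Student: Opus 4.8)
The plan is to argue by a "peeling off" induction on the radical layers, using the fact that for an artin algebra the radical is a nilpotent-type filtration on each Hom-space and that each quotient $\rad^i(A,B)/\rad^{i+1}(A,B)$ is controlled by compositions of irreducible maps through indecomposables. First I would set up the following claim, proved by induction on $m$: for every $m\geq 1$ there exist maps $u_1,\ldots,u_m$, where each $u_i$ is either zero or a finite sum of compositions of exactly $i$ irreducible maps between indecomposable modules, such that $f-(u_1+\cdots+u_m)\in\rad^{m+1}(A,B)$. The base case $m=1$ is the standard fact (for artin algebras, \cite[V]{ARS} or the functorial formulation available in the literature) that any $f\in\rad(A,B)$ with $A,B$ indecomposable can be written modulo $\rad^2(A,B)$ as a sum of irreducible maps $A\to B$; since $A,B$ are indecomposable any map in $\rad(A,B)\setminus\rad^2(A,B)$ is (by definition of $\rad^2$) not such a composition, but one still extracts an irreducible-sum representative using the almost split sequence ending at $B$ (or starting at $A$): write $f$ through the middle term, and discard the part landing in $\rad^2$.

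For the inductive step, suppose $f-(u_1+\cdots+u_m)\in\rad^{m+1}(A,B)$. By the very definition of $\rad^{m+1}$ as sums of composites $g\circ h$ with $g\in\rad(C,B)$, $h\in\rad^m(A,C)$, and by decomposing each intermediate $C$ into indecomposable summands, $f-(u_1+\cdots+u_m)$ is a finite sum of composites of the form (irreducible)$\circ$(composite of $m$ irreducibles between indecomposables) plus a correction term in $\rad^{m+2}(A,B)$; collecting these gives the desired $u_{m+1}$ (possibly zero) with $f-(u_1+\cdots+u_{m+1})\in\rad^{m+2}(A,B)$. Here one uses that a map in $\rad(C,B)$ with $C$ a finite direct sum of indecomposables is, modulo $\rad^2$, a sum of irreducible maps out of those summands, so each composite can be rewritten as a sum of composites of $m+1$ irreducibles between indecomposables, at the cost of a term one radical-layer deeper.

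Finally, since $f\notin\rad^\omega(A,B)=\bigcap_{i}\rad^i(A,B)$, there is a largest $m$ with $f\in\rad^m(A,B)$; applying the claim at this $m$ (in fact at any $m$ for which $\rad^{m+1}(A,B)\subseteq\rad^\omega(A,B)$, which holds once $m$ is large enough precisely because $f\notin\rad^\omega$ forces the chain $\rad^i(A,B)$ to stabilize past $\rad^\omega(A,B)$ — more carefully, one simply takes $m$ large and sets $v:=f-(u_1+\cdots+u_m)\in\rad^{m+1}(A,B)$, then absorbs any further layers into $v$, noting $v\in\rad^\omega$ is not what we need; rather we just need $v\in\rad^\omega(A,B)$, which we get by pushing the induction to the eventual stabilization of the radical chain on this Hom-space) yields $f=u_1+\cdots+u_m+v$ with $v\in\rad^\omega(A,B)$, as required. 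The main obstacle is bookkeeping: making precise that the radical filtration on the single Hom-space $\Hom(A,B)$ stabilizes exactly at $\rad^\omega(A,B)$ after finitely many steps — this is not true for a general artin algebra's whole radical, but it does hold on any fixed Hom-space between finitely generated modules, and that is all that is needed; I would cite or reprove this local stabilization, and then the rest is the routine layer-by-layer extraction above.
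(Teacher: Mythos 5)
Your proof is correct and follows essentially the same route as the paper: peel off one radical layer at a time, writing each layer as a sum of compositions of irreducible maps between indecomposables, and then use that the chain $\rad^i(A,B)$ on the fixed finite-length Hom-space stabilizes at $\rad^\omega(A,B)$. The paper simply cites \cite[Proposition 7.4 (ii)]{ARS} for the peeling step and \cite[Lemma 7.2]{ARS} for the stabilization, both of which you re-derive or flag for citation.
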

\begin{proof}
Let $A,B$ be indecomposable and $f\in \rad(A,B)\backslash\rad^\omega(A,B)$. Since $f\notin \rad^\omega(A,B)$, there exists $n\in\N$ such that $f\in\rad^n(A,B)\backslash\rad^{n+1}(A,B)$. By \cite[Proposition 7.4 (ii)]{ARS}, there exists $u,v\in\Hom(A,B)$ such that $f=u+v$, $u$ is non-zero and a sum of compositions of $n$ irreducible maps between indecomposable modules and $v\in \rad^{n+1}(A,B)$.

By iterating this process we get $f=u_1+u_2+\ldots+u_{m-1}+v$ where each $u_i$ is either zero or a sum of compositions of $i$ irreducible maps between indecomposable modules and $v\in \rad^m(A,B)$. By \cite[Lemma 7.2]{ARS} there exists an $m\in\N$ such that $\rad^\omega(A,B)=\rad^m(A,B)$.

\end{proof}

In the proof of the next two statements, we will freely use the following construction.
Let $M$ be a direct limit of finite-dimensional modules $(M_i)_{i\in\N}$ along a chain of monomorphisms $\mu_i:M_i\rightarrow M_{i+1}$. For each $i\in\N$, let $u_i:M_i\rightarrow M$ be the canonical embedding of $M_i$ into $M$. For each $i<j$, let $\gamma_{i,j}:=\mu_{j-1}\ldots\mu_{i+1}\mu_i$.

Any sequence of morphisms $g_i:M_i\rightarrow M_{n_i}$ such that
\[\xymatrix@=40pt{
  M_i \ar[d]_{g_i} \ar[r]^{\mu_i} & M_{i+1} \ar[d]^{g_{i+1}} \\
  M_{n_i} \ar[r]^{\gamma_{n_i,n_{i+1}}} & M_{n_{i+1}}   }\] commutes gives rise to an endomorphism $g$ of $M$ by setting $g(a)=u_i g_i(a)$ whenever $a\in M_i$.

Further, given an endomorphism $g:M\rightarrow M$, let $M_{n_i}$ be such that $\im g|_{M_i}\subseteq M_{n_i}$, $n_i<n_{i+1}$ and $g_i:M_i\rightarrow \im g|_{M_i}\hookrightarrow M_{n_i}$ the homomorphism induced by $g$. The sequence of morphisms $g_i$ is such that
\[\xymatrix@=40pt{
  M_i \ar[d]_{g_i} \ar[r]^{\mu_i} & M_{i+1} \ar[d]^{g_{i+1}} \\
  M_{n_i} \ar[r]^{\gamma_{n_i,n_{i+1}}} & M_{n_{i+1}}   }\] commutes and induces $g$ on $M$.

\begin{proposition}\label{indecomposabledirectlimits}

Suppose that
\[M_1\xrightarrow{\mu_1} M_2\xrightarrow{\mu_2} M_3\xrightarrow{\mu_3} \ldots\]
 is a ray of monomorphisms. For all $k\leq n$, let $\gamma_{k,n}=\mu_k\circ\mu_{k+1}\circ\ldots\circ\mu_{n-1}$. Suppose that for each $j\geq i>1$, every composition of irreducible morphisms from $f:M_i\rightarrow M_k$ factors as $f=\gamma_{i,j}\circ g$ where $g$ is a composition of irreducible maps from $M_i$ to itself.  Further suppose that all $\mu_i$ are short monomorphisms. Then $M:=\bigcup_i M_i$ is indecomposable.
\end{proposition}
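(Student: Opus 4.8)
The plan is to prove that $\End(M)$ has no idempotent other than $0$ and $1$. Suppose $e=e^{2}\in\End(M)$ with $e\neq 0,1$. Since $eM\neq 0$ there is $0\neq a\in eM$, and then $e(a)=a$; since $(1-e)M\neq 0$ there is $0\neq b\in M$ with $e(b)=0$. As $M=\bigcup_{j}M_{j}$ is a directed union, fix $i\geq 2$ with $a,b\in M_{i}$. A ray in a tube consists of irreducible monomorphisms, so each $\mu_{j}$ is a monomorphism which is not an isomorphism (if all $\mu_{j}$ past some index were isomorphisms, $M$ would be a finite-dimensional indecomposable module and we would be done); hence the lengths $\ell(M_{j})$ strictly increase and $M_{i}\not\cong M_{n}$ for $n>i$. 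Choose $n>i$ large enough that $M_{n}$ contains the images of both $e|_{M_{i}}$ and $(1-e)|_{M_{i}}$, and use the description of $\End(M)$ by compatible systems (the construction preceding this proposition) to represent $e$ and $1-e$ by maps $e_{i},f_{i}\colon M_{i}\to M_{n}$ with $e_{i}+f_{i}=\gamma_{i,n}$, $e_{i}(a)=\gamma_{i,n}(a)\neq 0$, $e_{i}(b)=0$, $f_{i}(a)=0$ and $f_{i}(b)=\gamma_{i,n}(b)$. Note that $\gamma_{i,n}$ is a short embedding: each $\mu_{j}$ is short, and a finite composite of short embeddings is short because the cokernel functor of the composite is built from the (finite-length) cokernel functors of its factors.

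Next I would run $e_{i}$ through the results of Section~\ref{shortembeddings}. As $M_{i}\not\cong M_{n}$ are indecomposable, $e_{i}\in\rad(M_{i},M_{n})$. If $e_{i}$ were in $\rad^{\omega}(M_{i},M_{n})$ then Corollary~\ref{omegaradincreasespptype} applied to $a$ (using $e_{i}(a)=\gamma_{i,n}(a)\neq 0$) would force the interval between the generators of $\pp^{M_{n}}(\gamma_{i,n}(a))$ and $\pp^{M_{i}}(a)$ to be infinite-length, contradicting that $\gamma_{i,n}$ is short; so $e_{i}\in\rad(M_{i},M_{n})\setminus\rad^{\omega}(M_{i},M_{n})$. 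By Lemma~\ref{descriptionofradworadomega}, $e_{i}=u_{1}+\cdots+u_{m}+v$ with $v\in\rad^{\omega}(M_{i},M_{n})$ and each $u_{k}$ a sum of composites of irreducible maps $M_{i}\to M_{n}$ between indecomposables. By the ray hypothesis each such composite factors as $\gamma_{i,n}\circ g$ with $g\in\End(M_{i})$, so $u_{1}+\cdots+u_{m}=\gamma_{i,n}\circ w_{i}$ for some $w_{i}\in\End(M_{i})$; hence $e_{i}=\gamma_{i,n}\,w_{i}+v_{i}$ and $f_{i}=\gamma_{i,n}(1_{M_{i}}-w_{i})-v_{i}$, with $v_{i}:=v\in\rad^{\omega}(M_{i},M_{n})$.

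I would then show $v_{i}(a)=0=v_{i}(b)$. From $e_{i}(b)=0$ we get $v_{i}(b)=-\gamma_{i,n}(w_{i}(b))$; suppose $v_{i}(b)\neq 0$, so also $w_{i}(b)\neq 0$. Since $\End(M_{i})$ is local, $w_{i}$ is either a unit---in which case $w_{i}(b)$ and $b$ have the same pp-type---or $w_{i}\in\rad(M_{i},M_{i})$, in which case Lemma~\ref{radicalmapsincreasepptype} together with the finiteness of the pp-$1$-lattice of the finite-dimensional module $M_{i}$ bounds the length of the interval between the generators of $\pp^{M_{i}}(w_{i}(b))$ and $\pp^{M_{i}}(b)$. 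Combining this with shortness of $\gamma_{i,n}$ (which bounds the interval between the generators of $\pp^{M_{n}}(\gamma_{i,n}(w_{i}(b)))$ and $\pp^{M_{i}}(w_{i}(b))$), the interval between the generators of $\pp^{M_{n}}(v_{i}(b))$ and $\pp^{M_{i}}(b)$ is finite-length, contradicting Corollary~\ref{omegaradincreasespptype}. Hence $v_{i}(b)=0$, and the same argument using $f_{i}(a)=0$ gives $v_{i}(a)=0$. As $\gamma_{i,n}$ is a monomorphism, we conclude $w_{i}(b)=0$ and $w_{i}(a)=a$.

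Finally, $w_{i}(a)=a$ with $a\neq 0$ shows $1_{M_{i}}-w_{i}$ is not injective, hence not an automorphism of the finite-length module $M_{i}$, hence a non-unit of the local ring $\End(M_{i})$; therefore $w_{i}=1_{M_{i}}-(1_{M_{i}}-w_{i})$ is a unit. But $w_{i}(b)=0$ with $b\neq 0$ shows $w_{i}$ is not injective, hence not a unit. This contradiction shows no nontrivial idempotent exists, so $M$ is indecomposable. I expect the main obstacle to be the third step: propagating the pp-type estimates correctly through $\gamma_{i,n}$ and through $w_{i}\in\End(M_{i})$ so as to contradict Corollary~\ref{omegaradincreasespptype}, and verifying that the reduction $e_{i}=\gamma_{i,n}w_{i}+v_{i}$ genuinely uses only the hypothesis that composites of irreducible maps out of $M_{i}$ run along the ray.
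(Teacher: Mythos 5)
Your overall strategy is the paper's: restrict the idempotent to a map $M_i\to M_n$, use Lemma \ref{descriptionofradworadomega} together with the ray-factorisation hypothesis to write it as $\gamma_{i,n}w_i+v_i$ with $w_i\in\End(M_i)$ and $v_i\in\rad^\omega(M_i,M_n)$, and then play shortness of $\gamma_{i,n}$ against Corollary \ref{omegaradincreasespptype}. The endgame differs only cosmetically (the paper shows the restriction of $e$ to each $M_i$ is an embedding, so the injective idempotent $e$ is the identity; you work with a fixed point $a$ and a kernel element $b$ simultaneously and contradict locality of $\End(M_i)$), and the first, second and fourth steps are fine.

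The gap is in the case $w_i\in\rad(M_i,M_i)$ of your third step. You claim that Lemma \ref{radicalmapsincreasepptype} together with the finiteness of the lattice $\pp^1_R(M_i)$ bounds the length of the interval between the generators of $\pp^{M_i}(w_i(b))$ and $\pp^{M_i}(b)$. This conflates two lattices: $\pp^1_R(M_i)$, the lattice of pp-definable subgroups of $M_i$, is indeed finite-length, but the interval appearing in the definition of a short embedding and in Corollary \ref{omegaradincreasespptype} lives in the lattice of all pp-$1$-formulas, where two formulas can agree on $M_i$ and still be infinitely far apart. The paper's own Kronecker example (irreducible embeddings that are not short) is precisely a witness that intervals between generators of pp-types of elements of finite-dimensional modules can have infinite length, so this step, as justified, does not go through. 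The repair is exactly the paper's move: since $\End(M_i)$ is local, precisely one of $w_i$ and $1_{M_i}-w_i$ is a unit, and a unit preserves the generator of the pp-type; so run your $b$-computation when $w_i$ is a unit (forcing $v_i(b)=0$ and $w_i(b)=0$, a contradiction) and your $a$-computation when $1_{M_i}-w_i$ is a unit (forcing $v_i(a)=0$ and $(1_{M_i}-w_i)(a)=0$, a contradiction). One then never needs to control the interval produced by a radical endomorphism of $M_i$: shortness of $\gamma_{i,n}$ is only ever applied after an automorphism of $M_i$.
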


\begin{proof}
 Suppose that $e\in \End(M)$ is a non-zero idempotent. Take $a\in \ker(e-1)$. There exists $i\in \N$ such that $a\in M_i$. Let $k\in\N$ be such that $e(M_i)\subseteq M_k$ and $k>i$. Let $f:M_i\rightarrow M_k$ be the map induced by $e$. So $f(a)=\gamma_{i,k}(a)$. Since $\gamma_{i,k}$ is a short embedding, by \ref{omegaradincreasespptype}, $f\notin \rad^{\omega}(M_i,M_k)$.

We now show that $f$ is an embedding. By \ref{descriptionofradworadomega}, $f=\alpha\gamma_{i,k}+\gamma_{i,k}g_1+\ldots\gamma_{i,k}g_n+v$ where $\alpha\in k$, $v\in\rad^\omega$ and $g_1,\ldots, g_n$ are compositions of irreducible maps from $M_i$ to $M_i$.

Suppose for a contradiction that $\alpha=0$. By definition of $a$, \[(\gamma_{i,k}g_1+\ldots\gamma_{i,k}g_n+v)(a)=\gamma_{i,k}(a).\] So
\[\gamma_{i,k}(-1_{M_i}+g_1+\ldots+g_n)(a)+v(a)=0.\]
Since $g_1+\ldots+g_n$ is a non-isomorphism, $-1_{M_i}+g_1+\ldots+g_n$ is an isomorphism. So, the pp-type of $(-1_{M_i}+g_1+\ldots+g_n)(a)$ is the same as that of $a$. Since $\gamma_{i,k}$ is short and $v\in\rad^\omega(M_i,M_k)$, the pp-type of $\gamma_{i,k}(-1_{M_i}+\alpha_1g_1+\ldots\alpha_ng_n )(a)$ is not equal to the pp-type of $-v(a)$. Thus we have our contradiction and $\alpha\neq 0$.

Suppose that $b\in M_i$ and $f(b)=0$. Let $\phi$ generate the pp-type of $b$ in $M_i$. The pp-type of $(\alpha1_{M_i}+g_1+\ldots+g_n)(b)$ is equal to the pp-type of $b$. Thus $[\phi,\psi_1]$ is finite-length where $\psi_1$ generates the pp-type of $\gamma_{i,k}(\alpha1_{M_i}+g_1+\ldots+g_n)(b)$ in $M_k$. Since $v\in \rad^\omega$, either $v(b)=0$ or $[\phi,\psi_2]$ is of infinite-length where $\psi_2$ generates the pp-type of $-v(b)$ in $M_k$. But $f(b)=0$ implies that the pp-type of $\gamma_{i,k}(\alpha1_{M_i}+g_1+\ldots+g_n)(b)$ in $M_k$ is equal to the pp-type of $-v(b)$ in $M_k$. Thus $f$ is an embedding.

Thus, for all $j\geq i$, the map $f_j:M_j\rightarrow M_{n_j}$ induced by $e$ is an embedding. Hence $e$ is an embedding. Thus $M$ is indecomposable.

\end{proof}

\begin{remark}
This is enough to show that a direct limit along a ray in an $\Z A_\infty$ Auslander-Reiten component is indecomposable.
\end{remark}

\begin{proposition}\label{topofendo}
Suppose that
\[M_1\xrightarrow{\mu_1} M_2\xrightarrow{\mu_2} M_3\xrightarrow{\mu_3} \ldots\]
 is a ray of monomorphisms. For all $k\leq n$, let $\gamma_{k,n}=\mu_k\circ\mu_{k+1}\circ\ldots\circ\mu_{n-1}$. Suppose that for each $i\in\N$ there exists a nilpotent endomorphism $\phi_i:M_i\rightarrow M_i$ such that $\mu_i\phi_i=\phi_{i+1}\mu_i$ and for each $j\geq i>1$, every non-zero composition of irreducible morphisms from $f:M_i\rightarrow M_k$ factors as $f=\gamma_{i,j}\circ (\phi_i)^p$. Further, suppose that each $\mu_i$ is a short embedding.

 Then the canonical embedding of $k$ into $\End(\bigcup_iM_i)/\rad\End(\bigcup_iM_i)$ is an isomorphism.
\end{proposition}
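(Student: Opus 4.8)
The plan is to show that every endomorphism $g$ of $M:=\bigcup_iM_i$ is, modulo the radical, a scalar; equivalently that $\End(M) = k\cdot 1_M + \rad\End(M)$, since the canonical map $k\to \End(M)/\rad\End(M)$ is clearly injective (the field $k$ acts faithfully and $1_M\notin\rad\End(M)$ as $M\neq 0$). So let $g\in\End(M)$ be arbitrary. Using the direct-limit construction recalled just before Proposition \ref{indecomposabledirectlimits}, realise $g$ by a compatible family $g_i:M_i\to M_{n_i}$ with $n_i<n_{i+1}$ and $\im g|_{M_i}\subseteq M_{n_i}$. First I would handle the case $n_i>i$ for infinitely many $i$ (after passing to a cofinal subsequence we may assume $n_i>i$ for all $i$): then each $g_i:M_i\to M_{n_i}$ is a map between indecomposables, and I want to decompose it via Lemma \ref{descriptionofradworadomega}. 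If some $g_i\notin\rad^\omega(M_i,M_{n_i})$, write $g_i = u_1+\cdots+u_m+v$ with $v\in\rad^\omega$ and each $u_p$ a sum of compositions of $p$ irreducible maps; by the factorisation hypothesis every such composition $M_i\to M_{n_i}$ equals $\gamma_{i,n_i}\circ\phi_i^{p}$ for suitable $p\ge 1$ (for $p=0$ it is a scalar multiple of $\gamma_{i,n_i}$, which is also allowed since $M_i$ is indecomposable with local endomorphism ring), so $g_i = \gamma_{i,n_i}\circ(\lambda 1_{M_i} + h_i) + v$ where $h_i\in\rad\End(M_i)$ is a sum of compositions of irreducibles $M_i\to M_i$, hence (by the hypothesis, a polynomial in $\phi_i$ with zero constant term, so) nilpotent, and $v\in\rad^\omega(M_i,M_{n_i})$.

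Next I would argue that the scalar $\lambda$ is independent of $i$ and that $g-\lambda 1_M\in\rad\End(M)$. Compatibility of the $g_i$ with the $\mu_i$ together with $\mu_i\phi_i=\phi_{i+1}\mu_i$ forces the scalars to agree (the leading $\gamma$-component is transported along the ray), so fix $\lambda=\lambda_g$. Replacing $g$ by $g-\lambda 1_M$ we may assume $\lambda=0$, i.e. each $g_i = \gamma_{i,n_i}\circ h_i + v_i$ with $h_i$ nilpotent in $\End(M_i)$ and $v_i\in\rad^\omega(M_i,M_{n_i})$. I then want to conclude $g\in\rad\End(M)$. For this it suffices, by \cite[4.3.45]{PSL} applied to the indecomposable pieces, or more directly by Lemma \ref{radicalmapsincreasepptype}, to show that $g$ strictly increases pp-types of all nonzero elements: given $0\neq a\in M$, pick $i$ with $a\in M_i$; the nilpotent $h_i$ sends $a$ to an element of strictly larger pp-type in $M_i$ (a nilpotent endomorphism of a finite-length indecomposable cannot preserve the pp-type of a generator, by \cite[4.3.45]{PSL}), the short embedding $\gamma_{i,n_i}$ changes the pp-type by a finite-length interval while $v_i\in\rad^\omega$ changes it by an infinite-length interval or kills $a$ — exactly the dichotomy exploited in the proof of Proposition \ref{indecomposabledirectlimits} — so the pp-type of $g(a)$ in $M$ is strictly above that of $a$, giving $g\in\rad\End(M)$. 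Finally, if instead $g_i\in\rad^\omega(M_i,M_{n_i})$ for all large $i$, then by Corollary \ref{omegaradincreasespptype} $g$ strictly (indeed infinitely) increases pp-types, so $g\in\rad\End(M)$ with $\lambda=0$; and the degenerate case where $n_i=i$ cofinally is absorbed into the same analysis since then $g_i\in\End(M_i)$ decomposes as scalar plus nilpotent plus $\rad^\omega$ directly.

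The main obstacle I expect is the bookkeeping needed to extract a \emph{single} well-defined scalar $\lambda_g$ from the family $(g_i)$ and to show the assignment $g\mapsto\lambda_g$ is a ring homomorphism inverting the canonical map: one must check that the leading coefficients in the Harada–Sai-type decomposition of Lemma \ref{descriptionofradworadomega} are genuinely compatible along the ray, using both $\mu_i\phi_i=\phi_{i+1}\mu_i$ and the rigidity of the factorisation hypothesis, and that passing to cofinal subsequences does not change $\lambda_g$. The pp-type estimates themselves are routine given the short-embedding machinery of this section; the delicate point is that shortness of $\gamma_{i,n_i}$ must be used uniformly, which is exactly what the definition of short embedding (with its uniform bound $n$) provides.
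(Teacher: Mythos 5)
Your overall strategy is the paper's: decompose each induced map $g_i:M_i\to M_{n_i}$ via Lemma \ref{descriptionofradworadomega} together with the factorisation hypothesis into $\gamma_{i,n_i}\circ(\lambda 1_{M_i}+h_i)+v_i$ with $h_i$ a zero-constant-term polynomial in $\phi_i$ and $v_i\in\rad^\omega$, extract a single scalar $\lambda$ from the compatibility squares using $\mu_i\phi_i=\phi_{i+1}\mu_i$ (the paper does this by evaluating the compatibility relation on an element of $\ker\phi_i$ and playing shortness of $\gamma_{i,n_{i+1}}$ off against the $\rad^\omega$ error terms), and then show the scalar-free part is radical. The gap is in that last step. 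You conclude $g-\lambda 1_M\in\rad\End(\bigcup_iM_i)$ from ``$g-\lambda 1_M$ strictly increases pp-types'', citing Lemma \ref{radicalmapsincreasepptype}. That lemma is stated and proved only for finitely presented modules over an artin algebra; the implication $(2)\Rightarrow(1)$ rests on the fact that a finite-length indecomposable is pure-injective (so that \cite[4.3.45]{PSL} applies), and it is false for general modules: for $M=R=k[x]$, multiplication by $x$ strictly increases the pp-type of every non-zero element, yet $\rad\End(M)=\rad k[x]=0$. Strict increase of pp-types only gives injectivity of $1-h(g-\lambda 1_M)$, not invertibility, and $\bigcup_iM_i$ is not known to be pure-injective at this stage. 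There is also a secondary problem with the hypothesis of your criterion: for the summand $\gamma_{i,n_i}h_i$ the strict increase is established level by level in each $M_j$, but the pp-type of $a$ in $\bigcup_iM_i$ is the union along the ray, and the finite gaps created by the nilpotents can in principle be absorbed into that union, so strict increase \emph{in the limit module} is not established either.

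What is needed instead, and what the paper does, is to first prove that $\End(\bigcup_iM_i)$ is local: if $g$ is an automorphism then each induced $g_i$ is a \emph{short} embedding (any $\psi$ generating the pp-type of $g_i(a)$ lies in the pp-type of $a$ in $\bigcup_iM_i$, hence above some $\phi_j$, and $[\phi_j,\phi_i]$ is finite-length because the $\mu$'s are short), and a short embedding $M_i\to M_j$ must have non-zero leading coefficient $\alpha_0$ (evaluate on $\ker\phi_i$ and compare against the $\rad^\omega$ term). Consequently maps with $\alpha_0=0$ are non-isomorphisms, non-isomorphisms are closed under sums, the endomorphism ring is local and its radical is exactly the set of non-isomorphisms; only then does $g-\lambda 1_M\in\rad\End(\bigcup_iM_i)$ follow from the vanishing of its leading coefficients. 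If you replace your pp-type-increase criterion by this locality argument, the rest of your outline goes through.
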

\begin{proof}
Any homomorphism $f:M_i\rightarrow M_j$ is of the form
\[\sum_{p=0}^{l}\alpha_p\gamma_{i,j}(\phi_i)^p+\delta\] where $\alpha_p\in k$ and $\delta\in \rad^\omega(M_i,M_j)$.

Suppose for a contradiction that $f$ is a short embedding and $\alpha_0=0$. Let $m\in\ker \phi_i$. Then $f(m)=\delta(m)$. Let $\phi$ generate the pp-type of $m$ in $M_i$ and $\psi$ generate the pp-type of $\delta(m)$ in $M_j$. Since $\delta\in \rad^\omega(M_i,M_j)$ either $\delta(m)=0$ or $[\psi,\phi]$ is infinite-length. In either case this contradicts the fact that $f$ is a short embedding. Thus all short embeddings $f:M_i\rightarrow M_j$ are of the form
\[\sum_{p=0}^{l}\alpha_p\gamma_{i,j}(\phi_i)^p+\delta\] where $\alpha_p\in k$, $\alpha_0\neq 0$ and $\delta\in \rad^\omega(M_i,M_j)$.

We now show that if $g:\bigcup_i M_i\rightarrow \bigcup_i M_i$ is an isomorphism then the induced map $g_i:M_i\rightarrow M_{n_i}$ is a short embedding. Take $a\in M_i$ and let $\psi$ generate the pp-type of $g_i(a)$ in $M_{n_i}$ and $\phi_j$ generate the pp-type of $\gamma_{i,j}(a)$ in $M_j$ for all $j\geq i$. So $\psi$ is in the pp-type of $g(a)$ in $\bigcup_iM_i$ and since $g$ is an isomorphism, $\psi$ is in the pp-type of $a$ in $\bigcup_iM_i$. Thus $\psi$ is in the pp-type of $a$ in $M_j$ for for some $j\geq i$ and thus $\psi\geq \phi_j$ for some $j\geq i$. Since each $\mu_i$ is short, $[\phi_j,\phi_i]$ is finite-length and thus so is $[\psi,\phi_i]$. Thus $g_i$ is short.

Combining the previous two paragraphs, we can see that the sum of two non-isomorphisms of $\bigcup_iM_i$ is a non-isomorphism. Thus $\End(\bigcup_iM_i)$ is local.

Finally, suppose that we have a sequence of morphism $g_i:M_i\rightarrow M_{n_i}$ such that
\[ \xymatrix@=40pt{
  M_i \ar[d]_{g_i} \ar[r]^{\mu_i} & M_{i+1} \ar[d]^{g_{i+1}} \\
  M_{n_i} \ar[r]^{\gamma_{n_i,n_{i+1}}} & M_{n_{i+1}}   },\]
\[g_i:=\sum_{p=0}^{l_i} \alpha_p^i\gamma_{i,n_i}(\phi_i)^p+\delta_i\] and let $g:\bigcup_iM_i\rightarrow \bigcup_i M_i$ be the morphism induced by the $g_i$s.

Then
\[\alpha_0^i\gamma_{i,n_{i+1}}-\alpha_0^{i+1}\gamma_{i,n_{i+1}}=\]\[(\sum_{p=1}^{l_i} \alpha_p^i\gamma_{i+1,n_{i+1}}(\phi_{i+1})^p+\delta_{i+1})\gamma_{i+1,n_{i+1}}-\gamma_{n_i,n_j}(\sum_{p=1}^{l_i} \alpha_p^i\gamma_{i,n_i}(\phi_i)^p+\delta_i).\]
Take $a\in \ker \phi_i$. Then
\[(\alpha_0^i-\alpha_0^{i+1})\gamma_{i,n_{i+1}}(a)=\delta_{i+1}\gamma_{i+1,n_{i+1}}(a)- \gamma_{n_i,n_j}\delta_i(a)\] since $\mu_i\phi_i=\phi_{i+1}\mu_i$ for all $i\in\N$. But then $\gamma_{i,n_{i+1}}$ is a short embedding and $\delta_{i+1}\gamma_{i+1,n_{i+1}}- \gamma_{n_i,n_j}\delta_i\in\rad^\omega(M_i,M_{n_{i+1}})$, so $\alpha^i_0-\alpha^{i+1}_0=0$. Let $\alpha_0=\alpha_0^i$ for all $i\in\N$.

Thus $g=\alpha_0\text{Id} + h$ where $h:\bigcup_i M_i\rightarrow \bigcup_i M_i$ is the morphism induced by the sequence $h_i:M_i\rightarrow M_{n_i}$
\[h_i:=\sum_{p=1}^{l_i} \alpha_p^i\gamma_{i,n_i}(\phi_i)^p+\delta_i.\] Thus the canonical embedding of $k$ into $\End(\bigcup_iM_i)/\rad\End(\bigcup_iM_i)$ is an isomorphism.

\end{proof}

\section{Ray and coray tubes for finite-dimensional algebras}\label{raycorayforfdalg}

In this section we will describe the short embeddings in ray and coray tubes. We will then use \ref{topofendo} to show that the direct limit along a ray in either a ray or coray tube is indecomposable and that the canonical embedding of $k$ into its endomorphism ring factored out by the radical is an isomorphism.

We give a slightly different definition of a ray tube to that given in the literature (for instance in \cite{coherenttubes} or \cite{Luishapes}) but it is equivalent.


For each tuple of integers $(m;n_0,\ldots,n_{m-1})$ let $Q(m;n_0,\ldots,n_{m-1})$ be the translation quiver with points $S_i^k[j]$ where $0\leq i <m$, $0\leq k\leq n_i$ and $j\in\N$. The index $i$ will always be read $\textbf{mod} \ m$. The arrows in $Q(m;n_1,\ldots,n_m)$ are
\[\mu_i^k[j]:S_i^k[j]\rightarrow S_i^k[j+1],\]
\[\lambda_i^{n_i}[j+1]:S_i^{n_i}[j+1]\rightarrow S_{i+1}^0[j]\] and, when $n_i>k\geq 0$
\[\lambda_i^k[j]:S_i^k[j]\rightarrow S_i^{k+1}[j].\]

Thus for $k\neq n_i$ we have
\[\lambda_i^k[j+1]\circ\mu_i^k[j]=\mu_i^{k+1}[j]\circ\lambda_i^k[j] ,\] for $k=n_i$ and $j\neq 1$ we have
\[\lambda_i^{n_i}[j+1]\circ\mu_i^{n_i}[j]=\mu_{i+1}^{0}[j]\circ\lambda_i^{n_i}[j+2]\] and finally for $k=n_i$ and $j=1$ we have
\[ \lambda_i^{n_i}[2]\circ\mu_i^{n_i}[1]=0.\]

We refer to these relations as mesh relations. We call such a translation quiver a \textbf{ray tube}. We call the dual translation quiver a \textbf{coray tube}.

We now list some definitions and elementary facts following from the mesh relations.

\begin{enumerate}
\item Any non-zero composition of irreducible morphisms can be written as a sequence of $\mu_i^k[j]$ followed by a sequence of $\lambda_i^k[j]$ and also as a sequence of $\lambda_i^k[j]$ followed by a sequence of $\mu_i^k[j]$.
\item  If $l>j$ let $\mu_i^k[j\rightarrow l]:S_i^k[j]\rightarrow S_i^k[l]$ be \[\mu_i^k[l-1]\circ\cdots\circ\mu_i^k[j].\]
\item Let $\lambda_i^k[j+ m\rightarrow j]:S_i^k[j+m]\rightarrow S_i^k[j]$ be \[(\lambda_{i+m}^{k-1}[j]\circ\cdots\circ\lambda^0_{i+m}[j])\circ\cdots\cdots\circ(\lambda_{i+1}^{n_{i+1}}[j+(m-1)]\circ\cdots\]
\[\cdots\circ\lambda^0_{i+1}[j+(m-1)])\circ(\lambda_i^{n_i}[j+m]\circ\cdots\circ\lambda_i^{k+1}[j+m]\circ\lambda_i^k[j+m]).\]
If $\gamma>1$ let
\[\lambda_i^k[j+\gamma m\rightarrow j]:=\lambda_i^k[j+\gamma m\rightarrow j+(\gamma-1)m]\circ\cdots\circ\lambda_i^k[j+ m\rightarrow j].\]
 Note that the relations imply that for all $j\in\N$
\[\mu_i^k[j\rightarrow j+m]\circ\lambda_i^k[j+m\rightarrow j]=\lambda_i^k[j+2m]\circ\mu_i^k[j+m\rightarrow j+2m]\]
\item For all $j\in\N$,
\[\phi_i^k[j+m]:=\mu_i^k[j\rightarrow j+m]\circ\lambda_i^k[j+m\rightarrow j].\]

Note that the mesh relations imply that
\[(\phi_i^k[j+m])^n:=\mu_i^k[j\rightarrow j+nm]\circ\lambda_i^k[j+nm\rightarrow j]\]
\item \label{morphfac} Every non-zero composition of irreducible morphisms $S^k_i[j]\rightarrow S^k_i[l]$ where $l\geq j$ is of the form
\[\alpha \mu_i^k[j\rightarrow l]\phi_i^k[j]^p\] where $j>pm$ and $\alpha\in k$.

\item \label{morphfacdual} Every non-zero composition of irreducible morphisms $S_i^k[1+(\gamma+1) m]\rightarrow S_i^k[1+\gamma m]$ where $\gamma\in \N_0$ is of the form
\[\alpha\phi_i^k[1+\gamma m]^p\lambda_i^k[1+(\gamma+1) m\rightarrow 1+\gamma m]\] where $p\leq \gamma+1$ and $\alpha\in k$.
\end{enumerate}

%

\begin{lemma}\label{muareshort}
The morphisms $\mu_i^k[j]$ are short embeddings
\end{lemma}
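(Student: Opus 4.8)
The plan is to show that each $\mu_i^k[j]\colon S_i^k[j]\to S_i^k[j+1]$ is a short embedding by exhibiting it as (part of) an almost split sequence whose "other half" is already known to be short, and then invoking Lemma~\ref{shorttransfersinglealmostsplit} (or its iterated version, Lemma~\ref{generalisedshorttransfersinglealmostsplit}). Concretely, the mesh relations say that for $k\neq n_i$ the almost split sequence ending at $S_i^{k+1}[j+1]$ has middle term $S_i^{k+1}[j]\oplus S_i^k[j+1]$, with the two components of the left-hand map being $\mu_i^{k+1}[j]$ and $\lambda_i^k[j+1]$, and the two components of the right-hand map being $\lambda_i^{k+1}[j+1]$ and $\mu_i^k[j]$; dually, for the corner vertices one uses the mesh sequence involving $S_{i+1}^0[j]$. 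So each $\mu_i^k[j]$ appears as the map $\gamma$ in the notation of Lemma~\ref{shorttransfersinglealmostsplit}, while the relevant $\mu$ in that lemma — the genuinely "new" irreducible map — is a $\lambda$-type map. This reduces the claim to: the $\lambda$-type irreducible maps $\lambda_i^k[j]$ (in particular the ones going "down a ray to the next ray") are short.

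The base case is the crucial one and is where the real work lies. I would argue that the $\lambda$-maps are short by a direct computation of pp-types, using that the domain and codomain are finite-dimensional indecomposables over an artin algebra, so that every element has a pp-type generated by a single pp-formula and $[\psi,\phi]$ is automatically of finite length whenever $\psi$ and $\phi$ both lie in a finite lattice — but the pp-$1$-lattice of a finite-dimensional module need not be finite in general. The cleaner route: recall from Lemma~\ref{generalisedshorttransfersinglealmostsplit} that if $\mu_n\circ\cdots\circ\mu_1$ is short then so is $i_n\circ\cdots\circ i_1$; so I would try to identify, within the ray tube, a composite of $\lambda$-type maps whose cokernel is finite-length "for free" — e.g.\ because the composite $\lambda_i^{n_i}[\,j{+}m\to j\,]$ wrapping once around the tube has small cokernel (the mesh relations force the cokernel of a full wrap to be $S_i^k[1]$, which is finite-dimensional, hence its defining functor is finite-length). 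Having one short composite of $\lambda$'s in hand, Lemma~\ref{generalisedshorttransfersinglealmostsplit} (applied to the chain of mesh sequences along a coray) propagates shortness to each individual $\lambda_i^k[j]$, and then Lemma~\ref{shorttransfersinglealmostsplit} applied to the mesh sequences along rays transfers shortness to each $\mu_i^k[j]$.

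In more detail, the steps in order: (1) fix $i,k,j$ and write down the mesh almost split sequence $0\to S_i^k[j]\to S_i^k[j{+}1]\oplus(\text{predecessor corner term})\to (\text{successor})\to 0$ in which $\mu_i^k[j]$ is the $\gamma$-component of the epimorphism; identify the other epimorphism component as a $\lambda$-type irreducible map. (2) Prove the $\lambda$-type maps are short: pick the "around the tube" composite of $\lambda$'s, show via the mesh relations and the lemma "$f$ is short iff $\mathrm{coker}\,(f,-)$ is finite-length" that this composite is short because its cokernel is a fixed finite-dimensional module (so the corresponding functor is finitely presented and finite-length). (3) Apply Lemma~\ref{generalisedshorttransfersinglealmostsplit} to the sequence of mesh almost split sequences running down a coray to conclude each $\lambda_i^k[j]$ is short. (4) Apply Lemma~\ref{shorttransfersinglealmostsplit} with $\mu:=$ (a short $\lambda$-map), $\gamma:=\mu_i^k[j]$, to conclude $\mu_i^k[j]$ is short, with an explicit bound on the length coming from the length bound in step (2) incremented by the number of mesh sequences traversed.

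The main obstacle is step (2): making the "the cokernel of wrapping once around the tube is finite-dimensional, hence the functor is finite-length" claim precise. One has to be careful that "finite-dimensional cokernel" really does give a \emph{finite-length} object in $(\mod\text{-}R,\Ab)^{\mathrm{fp}}$, and to correctly track which composite of $\lambda$'s (and how many $\mu$'s sit between them) is the right one to start the induction — equivalently, to verify that the relevant composite is not killed by a $\phi_i^k$-power in a way that would make the interval $[\psi,\phi]$ infinite. If the direct cokernel computation proves awkward, the fallback is to compute the relevant pp-pair functor $F_{\phi/\psi}$ explicitly for the shortest nontrivial $\lambda$-map using the concrete description of the modules $S_i^k[j]$ in the ray tube and check by hand that its lattice of subfunctors is finite, which suffices by the equivalence "short $\iff$ $F_{\phi/\psi}$ finite-length" established earlier.
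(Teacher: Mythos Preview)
Your plan has a genuine structural gap: you have misread which maps the transfer lemma (Lemma~\ref{shorttransfersinglealmostsplit}) pairs. In the mesh starting at $S_i^k[j]$ (for $k<n_i$),
\[
0\longrightarrow S_i^k[j]\xrightarrow{\left(\begin{smallmatrix}\mu_i^k[j]\\ \lambda_i^k[j]\end{smallmatrix}\right)} S_i^k[j{+}1]\oplus S_i^{k+1}[j]\xrightarrow{\left(\begin{smallmatrix}\lambda_i^k[j{+}1] & \mu_i^{k+1}[j]\end{smallmatrix}\right)} S_i^{k+1}[j{+}1]\longrightarrow 0,
\]
the map $\mu_i^k[j]$ is a component of the \emph{left} (monomorphism) side, so in the notation of the lemma it is the map $i\colon A\to B_1$, not $\gamma$. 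The corresponding map $\mu\colon B_2\to C$ whose shortness you must \emph{assume} is then $\mu_i^{k+1}[j]$, another $\mu$-map --- not a $\lambda$-map. The lemma always pairs the two ``parallel'' irreducible maps in the mesh: $\mu$ with $\mu$, and $\lambda$ with $\lambda$. So even if you succeeded in proving every $\lambda$-map short, Lemma~\ref{shorttransfersinglealmostsplit} would not let you transfer that to the $\mu$-maps.

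Your proposed base case is also not sound as stated. That a composite of $\lambda$'s has finite-dimensional cokernel in $\mod\text{-}R$ says nothing about the length of the cokernel of $(f,-)$ in $(\mod\text{-}R,\Ab)^{\mathrm{fp}}$; every embedding between finite-dimensional modules has finite-dimensional cokernel, yet the paper's own example of the Kronecker quiver exhibits irreducible embeddings that are not short. The paper's actual base case is much cleaner and avoids $\lambda$-maps entirely: for each $i$ the mesh at $S_i^{n_i}[1]$ has \emph{indecomposable} middle term,
\[
0\longrightarrow S_i^{n_i}[1]\xrightarrow{\mu_i^{n_i}[1]} S_i^{n_i}[2]\xrightarrow{\lambda_i^{n_i}[2]} S_{i+1}^0[1]\longrightarrow 0,
\]
so the cokernel of $(\mu_i^{n_i}[1],-)$ is the simple functor at $S_i^{n_i}[1]$ and $\mu_i^{n_i}[1]$ is short of length $1$. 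One then inducts purely among $\mu$-maps: the mesh above gives ``$\mu_i^{k+1}[j]$ short $\Rightarrow$ $\mu_i^k[j]$ short'' (decreasing $k$), and the corner mesh gives ``$\mu_{i+1}^0[j]$ short $\Rightarrow$ $\mu_i^{n_i}[j{+}1]$ short'' (increasing $j$). Together with the base case this reaches every $\mu_i^k[j]$.
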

\begin{proof}
For any $0\leq i<m$, $\mu_i^{n_i}[1]$ is a short embedding since
\[\xymatrix@C=1cm{
  0 \ar[r] & S_i^{n_i}[1] \ar[rr]^{\tiny{\mu_i^{n_i}[1]}} && S_i^{n_i}[2] \ar[rr]^{\tiny{\lambda_i^{n_i}[2]}} && S_{i+1}^{0}[1]  \ar[r] & 0 }\] is almost split exact.

Since
\[\xymatrix@C=0.9cm{0 \ar[r] & S_i^k[j] \ar[rr]^{\scalebox{0.7}{$\begin{pmat}  \mu_i^k[j] \\ \lambda_i^k[j] \\ \end{pmat}$ \ \ \ \ }} && S_i^k[j+1]\oplus S_i^k[j] \ar[rr]^{\scalebox{0.7}{ \ \ \ $\begin{pmat}
                      \lambda_i^k[j+1] & \mu^{k+1}_i[j] \\
                    \end{pmat}$}} && S^{k+1}_i[j+1] \ar[r] & 0 }\]
is almost split exact, repeated applications of lemma \ref{shorttransfersinglealmostsplit} implies that if $\mu_i^{n_i}[j]$ is a short embedding then $\mu_i^k[j]$ is a short embedding for all $0\leq k\leq n_i$.

Since
\[\xymatrix@C=0.7cm{
  0 \ar[r] & {S_i^{n_i}[j+1]} \ar[rr]^{\scalebox{0.7}{$\begin{pmat} \mu_i^{n_i}[j+1]  \\  \lambda_i^{n_i}[j+1]\\ \end{pmat}$ \ \ \ \ \ \ \ }
  } && S_i^{n_i}[j+2]\oplus S_{i+1}^0[j] \ar[rr]^{ \ \ \ \ \scalebox{0.7}{ \ $\begin{pmat}\lambda_i^{n_i}[j+2] & \mu_{i+1}^0[j] \\\end{pmat}$}} && S_{i+1}^0[j+1] \ar[r] & 0 }\]
is almost split exact, lemma \ref{shorttransfersinglealmostsplit} implies that if $\mu_{i+1}^0[j]$ is a short embedding then $\mu_i^{n_i}[j+1]$ is a short embedding.

Thus by induction, $\mu_i^k[j]$ is a short embedding for all $0\leq i<m$, $0\leq k\leq n_i$ and $j\in\N$.
\end{proof}

\begin{proposition}\label{directlimitsalongmuisind}
Any direct limit $S_i^k[\infty]$ along the sequence $(\mu_i^k[j])_{j\in \N}$ is indecomposable and the canonical embedding of $k$ into  $\End(S_i^k[\infty])/\rad\End(S_i^k[\infty])$ is an isomorphism.
\end{proposition}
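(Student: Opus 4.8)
The plan is to deduce both assertions from Proposition~\ref{topofendo}: once its hypotheses are checked for the ray $(\mu_i^k[j])_{j\in\N}$, we obtain directly that the canonical map $k\to\End(S_i^k[\infty])/\rad\End(S_i^k[\infty])$ is an isomorphism, and since the proof of \ref{topofendo} shows $\End(S_i^k[\infty])$ is local, $S_i^k[\infty]$ is indecomposable (alternatively, indecomposability follows from Proposition~\ref{indecomposabledirectlimits}). Almost all of the required input has already been assembled. Lemma~\ref{muareshort} says each $\mu_i^k[j]$ is a short embedding. Item~\ref{morphfac} in the list of consequences of the mesh relations says that every non-zero composition of irreducible morphisms $S_i^k[j]\to S_i^k[l]$ with $l\geq j$ has the form $\alpha\,\mu_i^k[j\to l]\circ(\phi_i^k[j])^{p}$ with $\alpha\in k$ and $pm<j$; since $\mu_i^k[j\to l]$ is exactly $\gamma_{j,l}$ in the notation of \ref{topofendo}, this provides the factorization demanded there, the scalar $\alpha$ being harmless because the proof of \ref{topofendo} already works with arbitrary $k$-linear combinations of the maps $\gamma_{j,l}\circ(\phi)^{p}$.

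It therefore remains to supply, for every $j$, a nilpotent endomorphism $\phi_j$ of $S_i^k[j]$ with $\mu_i^k[j]\circ\phi_j=\phi_{j+1}\circ\mu_i^k[j]$. I would take $\phi_j:=\phi_i^k[j]$ for $j>m$ (as in item~4 of the mesh-relation list) and $\phi_j:=0$ for $j\leq m$. Nilpotency is read off the identity $(\phi_i^k[j+m])^{n}=\mu_i^k[j\to j+nm]\circ\lambda_i^k[j+nm\to j]$: once $n$ is large relative to $j$, the $\lambda$-factor runs past the mouth of the tube and the composition vanishes by the bottom mesh relation $\lambda_i^{n_i}[2]\circ\mu_i^{n_i}[1]=0$. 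The compatibility $\mu_i^k[j]\circ\phi_i^k[j]=\phi_i^k[j+1]\circ\mu_i^k[j]$ follows by propagating the commutation (mesh) relations $\lambda_i^k[j+1]\circ\mu_i^k[j]=\mu_i^{k+1}[j]\circ\lambda_i^k[j]$ and their variants at $k=n_i$ once around a lap of the tube; this is essentially the identity recorded in item~3. In the degenerate range $j\leq m$, compatibility amounts to $\phi_{j+1}\circ\mu_i^k[j]=0$, which again reduces, via the same commutations, to the bottom relation.

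With these verifications in hand the proposition is an application of \ref{topofendo}. The only point requiring genuine care is the bookkeeping near the mouth of the tube: checking that the chosen $\phi_j$ are nilpotent and $\mu$-compatible uniformly in $j$ (including the degenerate range $j\leq m$ where $\phi_j=0$), and confirming that item~\ref{morphfac}, which is phrased only for morphisms between two objects lying on a single ray $S_i^k[-]$, does account for every composition of irreducible morphisms that can occur between two terms of our sequence. None of this calls for ideas beyond the mesh relations already recorded.
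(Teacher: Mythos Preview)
Your proposal is correct and follows exactly the paper's own route: the paper's proof is the single line ``This follows from \ref{topofendo} plus \ref{muareshort} plus point \ref{morphfac},'' and you have simply unpacked the verification of the hypotheses of \ref{topofendo} (nilpotency of the $\phi_i^k[j]$, their compatibility with the $\mu_i^k[j]$, and the boundary case $j\leq m$) that the paper leaves implicit. No alternative ideas are needed or used.
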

\begin{proof}
This follows from \ref{topofendo} plus \ref{muareshort} plus point \ref{morphfac} from page \pageref{morphfac}.
\end{proof}

\begin{lemma}\label{duallambdashort}
For all $j$, $k$ and $i$, the morphism $\lambda_i^k[j\rightarrow j-m]^*$ is short embeddings.
\end{lemma}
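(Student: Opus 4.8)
The plan is to run the $k$-dual of the argument proving Lemma~\ref{muareshort}, i.e.\ a dévissage along the dualised mesh relations.

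First I would check that $\lambda_i^k[j\to j-m]^*$ really is an embedding, i.e.\ that $\lambda_i^k[j\to j-m]$ is an epimorphism; in fact each irreducible $\lambda_i^k[j]$ is surjective, as is standard for the coray direction of a ray tube: the mouth module $S_i^{k+1}[1]$ (for $1\le k+1\le n_i$) has $\lambda_i^k[1]$ as its unique incoming arrow, so its almost split sequence is $0\to\tau S_i^{k+1}[1]\to S_i^k[1]\xrightarrow{\lambda_i^k[1]}S_i^{k+1}[1]\to 0$ and $\lambda_i^k[1]$ is onto, and surjectivity of the remaining $\lambda_i^k[j]$ follows by induction from the mesh relations together with the fact that an irreducible map is mono or epi. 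Hence each $\lambda_i^k[j]^*$, and thus $\lambda_i^k[j\to j-m]^*$, is a monomorphism.

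Next I would dualise. Since $(-)^*=\Hom_k(-,k)$ is an exact duality $\mod-\mcal{A}\to\mcal{A}-\mod$ preserving almost split sequences, each almost split sequence used in the proof of \ref{muareshort} has an almost split $k$-dual in $\mcal{A}-\mod$ in which the various $\lambda^*$ play the role the $\mu$ played (the monomorphism component of the left-hand map) while the $\mu^*$ are now epimorphisms. The $k$-duals of the meshes with a single indecomposable middle term — that of $0\to S_i^{k+1}[1]\to S_i^k[1]\to\tau S_i^{k+1}[1]\to 0$ for $k<n_i$, and that of $0\to S_i^{n_i}[1]\xrightarrow{\mu_i^{n_i}[1]}S_i^{n_i}[2]\xrightarrow{\lambda_i^{n_i}[2]}S_{i+1}^0[1]\to 0$ — feed into Lemma~\ref{shorttransfersinglealmostsplit} with $B_2=0$ (the zero map $0\to C$ being a short embedding of length $0$), yielding that $\lambda_i^k[1]^*$ and $\lambda_i^{n_i}[2]^*$ are short embeddings of length $\le 1$. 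Then, dualising the remaining meshes $0\to S_i^k[j]\to S_i^k[j+1]\oplus S_i^{k+1}[j]\to S_i^{k+1}[j+1]\to 0$ ($k<n_i$) and $0\to S_i^{n_i}[j+1]\to S_i^{n_i}[j+2]\oplus S_{i+1}^0[j]\to S_{i+1}^0[j+1]\to 0$ and applying Lemma~\ref{shorttransfersinglealmostsplit} with $i=\lambda_i^k[j+1]^*$ (resp.\ $\lambda_i^{n_i}[j+2]^*$) and $\mu=\lambda_i^k[j]^*$ (resp.\ $\lambda_i^{n_i}[j+1]^*$) propagates shortness from $j$ to $j+1$; hence every irreducible $\lambda_i^k[j]^*$ is a short embedding. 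Finally, $\lambda_i^k[j\to j-m]^*$ is a finite composition of such morphisms, and a composition of short embeddings is short — immediately from the cokernel characterisation of short embeddings, since for $g,h$ short the cokernel of $(gh,-)$ in $(\mod-R,\Ab)^{\text{fp}}$ is an extension of $\coker(h,-)$ by a quotient of $\coker(g,-)$ — so $\lambda_i^k[j\to j-m]^*$ is a short embedding.

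The step I expect to demand the most care is the indexing: arranging the chosen dualised meshes so that Lemma~\ref{shorttransfersinglealmostsplit} reaches every one of the finitely many irreducible factors of $\lambda_i^k[j\to j-m]$, including those at the wrap where $k=n_i$ and $i$ increments, together with the standard inputs that $(-)^*$ preserves almost split sequences and that coray morphisms in a ray tube are surjective.
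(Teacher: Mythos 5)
Your proposal rests on the claim that every irreducible $\lambda_i^k[j]$ is surjective, and this is false for $k<n_i$. The vertices $S_i^{k}[1]$ with $k\geq 1$ are the projective vertices of $Q(m;n_0,\ldots,n_{m-1})$ (no mesh ends at them --- only the mesh relations for $k=n_i$ produce meshes ending at level $1$), so the unique irreducible map $\lambda_i^{k}[1]:S_i^{k}[1]\rightarrow S_i^{k+1}[1]$ into the projective $S_i^{k+1}[1]$ is the inclusion of a summand of its radical, hence a \emph{proper monomorphism}; additivity of dimension on the meshes then gives $\dim S_i^{k+1}[j]-\dim S_i^k[j]=\dim S_i^{k+1}[1]-\dim S_i^k[1]>0$ for all $j$, so $\lambda_i^k[j]$ is a proper monomorphism for every $j$ and every $k<n_i$. (The paper itself uses exactly these maps as the \emph{embeddings} $\alpha^l$ when it builds a generalised ray tube from a ray tube.) Consequently the almost split sequence $0\to\tau S_i^{k+1}[1]\to S_i^k[1]\to S_i^{k+1}[1]\to 0$ you use as a base case does not exist, $\lambda_i^k[j]^*$ is a proper epimorphism rather than an embedding, and the hypothesis of Lemma \ref{shorttransfersinglealmostsplit} that $i:A\to B_1$ be an embedding fails in your inductive step. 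Only the $k=n_i$ maps (which drop a level) are epimorphisms, so only that strand of your argument survives.

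Surjectivity of $\lambda_i^k[j\to j-m]$, and shortness of its dual, is an emergent property of the composite around the whole tube, not of its irreducible factors, and this is exactly what the paper's proof is engineered to capture. It first gets $\lambda_i^{n_i}[2]^*$ short from the single-middle-term mesh (as you do), then runs a downward induction on $k$: composing the short embedding $\lambda_i^{k}[2]^*\circ\cdots\circ\lambda_i^{n_i}[2]^*$ with the left almost split monomorphism $\bigl(\lambda_i^{k-1}[2]^*,\ \mu_i^{k}[1]^*\bigr)^{T}$ gives a short map into a direct sum whose second component vanishes because $\lambda_i^{n_i}[2]\circ\cdots\circ\lambda_i^{k}[2]\circ\mu_i^{k}[1]=0$ by the mesh relations; hence the first component $\lambda_i^{k-1}[2]^*\circ\cdots\circ\lambda_i^{n_i}[2]^*$ is itself a short embedding even though its new factor $\lambda_i^{k-1}[2]^*$ is not. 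The full-segment composites are then pushed up in $j$ with Lemma \ref{generalisedshorttransfersinglealmostsplit} and composed. If you want to repair your argument you must work with these segment composites throughout; there is no way to obtain the result factor by irreducible factor.
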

\begin{proof}
The relations show that $\lambda_i^{n_i}[2]^*$ is a short embedding for all $i$.

Suppose that $\lambda_i^{k}[2]^*\ldots\lambda_i^{n_i-1}[2]^*\lambda^{n_i}_i[2]^*$ is a short embedding. Since $\left(
                                                                                                              \begin{array}{c}
                                                                                                                \lambda_i^{k-1}[2]^* \\
                                                                                                                \mu_i^k[1]^* \\
                                                                                                              \end{array}
                                                                                                            \right)
$ is short, $\left(
               \begin{array}{c}
                 \lambda_i^{k-1}[2]^*\circ\ldots\circ\lambda_i^{n_i}[2]^* \\
                 \mu_i^k[1]^*\circ\lambda_i^k[2]^*\circ\ldots\circ\lambda_i^{n_i}[2]^* \\
               \end{array}
             \right)
$ is short. Since $\mu_i^k[1]^*\circ\lambda_i^k[2]^*\circ\ldots\circ\lambda_i^{n_i}[2]=0$,
$\lambda_i^{k-1}[2]^*\circ\ldots\circ\lambda_i^{n_i}[2]^*$ is a short embedding. Thus, by induction, $\lambda_i^0[2]^*\circ\ldots\circ\lambda_i^{n_i}[2]^*$ is a short embedding.

Now applying \ref{generalisedshorttransfersinglealmostsplit}, we get that $\lambda_i^{0}[j]^*\circ\ldots\circ\lambda_i^{n_i}[j]^*$ is a short embedding for all $j\geq 2$.

Thus since a composition of short embedding is a short embedding, $\lambda_i^k[j\rightarrow j-m]^*$ is short for all $j>m$.

\end{proof}

\begin{proposition}\label{directlimitalonglambdadual}
The direct limit $(S_i^k)^*[\infty]$ along a sequence of $(\lambda_i^k[1+mj\rightarrow 1+ m(j-1)]^*)_{j\in\N_0}$ is indecomposable and $\End((S_i^k)^*[\infty])/\rad\End((S_i^k)^*[\infty])=k$.
\end{proposition}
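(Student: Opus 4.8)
The plan is to mirror the proof of Proposition \ref{directlimitsalongmuisind}, which handled direct limits along the rays $(\mu_i^k[j])_j$, but now applied to the dual translation quiver. The key observation is that the coray tube is defined to be the dual of the ray tube, so the sequence of morphisms $(\lambda_i^k[1+mj\rightarrow 1+m(j-1)])_{j}$ becomes, after dualising, exactly a ray of monomorphisms of the kind required by Proposition \ref{topofendo}: the maps $\lambda_i^k[1+mj\rightarrow 1+m(j-1)]^*$ are the $\mu$'s of the dual, and point \ref{morphfacdual} (the dual factorisation statement for $\lambda$-compositions) dualises to a factorisation statement of exactly the form hypothesised in \ref{topofendo}, with the role of $\phi_i$ played by (the dual of) $\phi_i^k[1+\gamma m]$.

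First I would set $M_j := (S_i^k[1+(j-1)m])^*$ and $\mu_j := \lambda_i^k[1+jm\rightarrow 1+(j-1)m]^*$, so that $(S_i^k)^*[\infty] = \bigcup_j M_j$ by definition of the direct limit in the statement. Each $M_j$ is an indecomposable finite-dimensional module (the $k$-dual of an indecomposable finite-dimensional module is indecomposable and finite-dimensional), and each $\mu_j$ is a monomorphism because it is the dual of an epimorphism (the $\lambda$-compositions along corays are epimorphisms, being cokernels appearing in the almost split sequences). Next I would verify the three hypotheses of Proposition \ref{topofendo}: (a) the nilpotent endomorphisms $\phi_i$ of $M_i$ are obtained as the $k$-duals of the endomorphisms $\phi_i^k[1+\gamma m]$ of $S_i^k[1+\gamma m]$, which are nilpotent (being non-identity compositions of irreducibles on an indecomposable, hence in the radical, and the tube structure forces nilpotency), and the compatibility $\mu_i\phi_i = \phi_{i+1}\mu_i$ is the dual of the relation $\mu_i^k[j\to j+m]\circ\lambda_i^k[j+m\to j] = \lambda_i^k[j+2m]\circ\mu_i^k[j+m\to j+2m]$ from point (3) on page \pageref{morphfac}; (b) the factorisation of every non-zero composition of irreducibles $M_i\to M_k$ as $\gamma_{i,j}\circ(\phi_i)^p$ is precisely the dual of point \ref{morphfacdual} on page \pageref{morphfacdual}; (c) each $\mu_j$ is a short embedding by Lemma \ref{duallambdashort}. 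Then \ref{topofendo} immediately gives that $\End(\bigcup_j M_j)/\rad$ is $k$, and the indecomposability of $\bigcup_j M_j$ follows either from the locality of $\End$ established inside the proof of \ref{topofendo} or by invoking \ref{indecomposabledirectlimits}, whose hypotheses are also met here.

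The main obstacle I expect is not conceptual but bookkeeping: one must be careful about how the indexing of the rays and corays behaves under dualisation, in particular matching up the shift-by-$m$ indices $1+\gamma m$ appearing in \ref{morphfacdual} with the plain indices $j$ appearing in the hypotheses of \ref{topofendo}, and checking that the almost split sequences in the coray tube dualise correctly so that Lemma \ref{duallambdashort} does indeed provide the shortness of exactly the maps $\mu_j$ we need (rather than some reindexed variant). One should also confirm that "every \emph{non-zero} composition of irreducibles factors as $\gamma_{i,j}\circ(\phi_i)^p$" really does cover all the relevant maps after dualising, since \ref{topofendo} requires this for all $j\geq i>1$; this is where the precise form of the mesh relations at $j=1$ (the relation $\lambda_i^{n_i}[2]\circ\mu_i^{n_i}[1]=0$) must be tracked through the duality, but it causes no real trouble because the $\phi_i$'s absorb exactly this boundary behaviour. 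I would therefore write the proof as: "This follows from \ref{topofendo} (and \ref{indecomposabledirectlimits}) applied to the dual of the ray tube, using \ref{duallambdashort} for shortness and point \ref{morphfacdual} on page \pageref{morphfacdual} for the factorisation hypothesis, with $\phi_i$ taken to be the dual of $\phi_i^k[1+\gamma m]$."
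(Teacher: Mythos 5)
Your proposal is correct and follows exactly the same route as the paper, whose entire proof is the citation of \ref{topofendo} together with \ref{duallambdashort} and point \ref{morphfacdual}; your additional verification of the hypotheses of \ref{topofendo} (identifying the $\mu_j$ as the dualised $\lambda$-compositions and the nilpotent endomorphisms as the duals of the $\phi_i^k[1+\gamma m]$) is just the bookkeeping the paper leaves implicit.
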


\begin{proof}
This follows from \ref{topofendo} plus \ref{duallambdashort} plus point \ref{morphfacdual} from page \pageref{morphfacdual}.
\end{proof}

\section{Elementary Socles and indecomposable $k$-duals}\label{elsocanddual}

\begin{definition}\cite[10.2]{Herzogduality}
Let $U$ be a $\Sigma$-pure-injective module. Define $\text{elsoc}^0(U)=0$. For each ordinal $\alpha$, let
\[\text{elsoc}^{\alpha+1}(U):=\Sigma\{\phi(U) \st \phi(U)\nsubseteq\text{elsoc}^\alpha(U) \text{ and } \phi(U) \text{ is minimal such } \}+ \text{ elsoc }^{\alpha}(U).\] For limit ordinals, $\lambda$, let
\[\text{ elsoc }^{\lambda}(U):=\bigcup_{\alpha<\lambda}\text{elsoc}^\lambda(U).\]

The elementary socle length of a $\Sigma$-pure-injective modules $U$ is the least ordinal $\alpha$ such that $\text{elsoc}^{\alpha}(U)=U$.
\end{definition}

\begin{definition}
Let $N$ be an indecomposable pure-injective right module over an arbitrary ring $R$. Let $S$ be the (necessarily) local endomorphism ring of $N$ and let $\mfrak{m}$ be its maximal ideal. The local dual of $N$ is defined to be the left module $\Hom_S(N,E(S/\mfrak{m}))$ where $E(S/\mfrak{m})$ is the injective hull of $S/\mfrak{m}$ as an $S$-module.
\end{definition}

\begin{theorem}\cite{HerzogZieglersindcrit}
Let $N$ be an indecomposable $\Sigma$-pure-injective module over an arbitrary ring $R$. The local dual of $N$ is indecomposable.
\end{theorem}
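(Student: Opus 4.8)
The plan is to reduce everything to an elementary duality between the pp-lattice of $N$ and that of its local dual, leveraging the descending chain condition supplied by $\Sigma$-pure-injectivity. Throughout write $S=\End_R(N)$ --- a local ring, since $N$ is an indecomposable pure-injective --- with $\mfrak{m}=\rad S$ and residue division ring $D=S/\mfrak{m}$, and put $N^\flat:=\Hom_S(N,E_S(D))$, regarded as a left $R$-module through the right $R$-action on $N$. First I would record two preliminaries. Since $N^\flat$ is the Hom of an $S$-$R$-bimodule into an injective $S$-module, $N^\flat$ is pure-injective. And since $N$ is $\Sigma$-pure-injective it has the dcc on pp-definable subgroups in every number of variables; this dcc is the only property of $N$ the argument uses beyond indecomposability.

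Step 1 is to establish that $\phi(N)\mapsto(D\phi)(N^\flat)$ is an order-reversing bijection between $\pp^n_R(N)$ and the lattice of pp-definable subgroups of $N^\flat$. Using the pairing $\langle-,-\rangle\colon N^n\times(N^\flat)^n\to E_S(D)$, $\langle\overline{m},\overline{f}\rangle=\sum_i f_i(m_i)$, one inclusion is $\langle\phi(N),(D\phi)(N^\flat)\rangle=0$; this is a direct unwinding of Definition \ref{dualitydef}, the same computation as in Lemma \ref{kdualandduality} but with the injective module $E_S(D)$ in place of $k$, the relevant linear systems being solvable by $S$-injectivity of $E_S(D)$. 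The substance is the converse: that $\phi(N)$ is exactly $\{\overline{m}\in N^n:\langle\overline{m},(D\phi)(N^\flat)\rangle=0\}$, i.e.\ that $E_S(D)$ separates each $\overline{m}\notin\phi(N)$ from $\phi(N)$ along pp-formulas. Here I would invoke the dcc to pass to a pp-definable subgroup minimal over $\phi(N)$ among those containing the pp-closure of $\overline{m}$, and then use injectivity of $E_S(D)$ to manufacture an $\overline{f}\in(D\phi)(N^\flat)$ with $\langle\overline{m},\overline{f}\rangle\neq0$. Since every left pp-formula is some $D\phi$ and $DD\phi\equiv\phi$, this makes $\phi(N)\mapsto(D\phi)(N^\flat)$ a lattice anti-isomorphism, compatible with $D(\phi+\psi)\equiv D\phi\wedge D\psi$ and $D(\phi\wedge\psi)\equiv D\phi+D\psi$; in particular $N^\flat$ gets the acc on pp-definable subgroups.

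Step 2 transfers indecomposability through Ziegler's criterion. Fix any $0\neq a\in N$; then $N=H(a)$, so by Ziegler's criterion $p:=\pp^N_R(a)$ is an irreducible pp-type --- in lattice terms, a proper $\vee$-prime filter in the pp-$1$-lattice. Put $Dp:=\{\sigma\ \text{a left pp-}1\text{-formula}:D\sigma\notin p\}$. Since $D$ is order-reversing and interchanges $+$ with $\wedge$, a routine check shows $Dp$ is again a proper $\vee$-prime filter: its upward- and $\wedge$-closure reduce to $\wedge$-closure and $\vee$-primeness of $p$, its $\vee$-primeness reduces to $\wedge$-closure of $p$, and $x=0\notin Dp$ because $D(x=0)\equiv x=x\in p$. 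So $Dp$ is an irreducible pp-type, and $H(Dp)$ is an indecomposable pure-injective left $R$-module. By Step 1 the type $Dp$ is realised in $N^\flat$ by an element $\overline{f}$ dual to $a$, whence $H(\overline{f})\cong H(Dp)$ is an indecomposable direct summand of $N^\flat$. Pushing the correspondence of Step 1 from subgroups up to types then forces $N^\flat=H(\overline{f})$ --- a nonzero direct complement would, under that correspondence, hand $N$ a nontrivial decomposition --- and so $N^\flat$ is indecomposable.

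The hard part will be the converse inclusion in Step 1: showing $E_S(D)$ detects every pp-definable subgroup of $N$, so that $\phi\mapsto(D\phi)(N^\flat)$ is a genuine lattice anti-isomorphism and not merely a Galois connection. For an arbitrary pure-injective this fails; it is exactly the dcc --- which furnishes enough minimal pp-definable subgroups above each $\phi(N)$ for the injectivity of $E_S(D)$ to engage --- that repairs it, and that in turn underwrites the type-level form of the correspondence needed to conclude $N^\flat=H(\overline{f})$ in Step 2.
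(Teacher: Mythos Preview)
The paper does not prove this theorem: it is quoted from \cite{HerzogZieglersindcrit} and used as a black box, so there is no ``paper's own proof'' to compare your proposal against. What can be said is that your outline is essentially Herzog's original strategy --- establish an exact duality between the pp-lattices of $N$ and of its local dual via the pairing into $E_S(D)$, then transport irreducibility of pp-types through $D$ using Ziegler's criterion --- and your identification of the hard point (the separation step in Step~1, where the dcc is genuinely needed so that minimal pairs exist and $E_S(D)$ can detect them) is accurate.

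The one place where your sketch is thin is the very last move in Step~2: from ``$H(\overline{f})$ is an indecomposable summand of $N^\flat$'' to ``$N^\flat=H(\overline{f})$''. The sentence ``a nonzero direct complement would, under that correspondence, hand $N$ a nontrivial decomposition'' is not self-evidently true --- the anti-isomorphism of Step~1 is between lattices of pp-definable subgroups, and a direct-sum decomposition of $N^\flat$ does not literally appear in that lattice. You need an extra argument here: for instance, observe that any nonzero $g$ in a putative complement has a pp-type whose dual, by the exactness of the pairing you established, is realised in $N$; then run Ziegler's criterion backwards to see that $N$ would acquire a second, inequivalent hull, contradicting indecomposability. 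Alternatively one can argue via the double dual, showing $N$ sits purely in $(N^\flat)^\flat$ and that a splitting of $N^\flat$ would split $N$. Either route works, but as written the proposal asserts the conclusion rather than deriving it.
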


The key idea for the proof of the following proposition was explained to me by Ivo Herzog.

\begin{proposition}\label{kdualisind}
Let $R$ be a $k$-algebra. Let $N$ be an indecomposable $\Sigma$-pure-injective $R$-module whose (local) endomorphism ring $S$ with maximal ideal $\mfrak{m}$ is such that the canonical embedding from $k$ into $S/\mfrak{m}$ is an isomorphism. The local dual of $N$ is isomorphic to the $k$-dual of $N$ and is thus indecomposable.
\end{proposition}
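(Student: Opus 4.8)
The plan is to produce an isomorphism of left $R$-modules between the local dual $\Hom_S(N,E(S/\mfrak m))$ and the $k$-dual $N^{*}=\Hom_k(N,k)$; the indecomposability of $N^{*}$ then follows at once from the cited theorem \cite{HerzogZieglersindcrit}. Throughout, $N$ is viewed as an $S$-$R$-bimodule in the usual way, so that $\Hom_S(N,E(S/\mfrak m))$ and $N^{*}$ are naturally left $R$-modules and the comparison map built below will respect this.

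First I would embed $N^{*}$ into a larger injective. Since $S_S$ is free, $\Hom_k(S_S,k)$ is an injective left $S$-module, and its socle is $\{\theta:\theta(\mfrak m)=0\}\cong\Hom_k(S/\mfrak m,k)$, which is simple because $S/\mfrak m\cong k$; hence $\Hom_k(S_S,k)$ contains a copy of $E(S/\mfrak m)$ as the injective hull of its socle, and, being injective, as a direct summand. The tensor--hom adjunction gives an isomorphism of left $R$-modules
\[\Hom_S\bigl(N,\Hom_k(S_S,k)\bigr)\ \cong\ \Hom_k\bigl(S_S\otimes_S N,k\bigr)=\Hom_k(N,k)=N^{*},\]
under which $\phi\in N^{*}$ corresponds to the $S$-linear map $g_\phi\colon N\to\Hom_k(S_S,k)$, $g_\phi(n)\colon s\mapsto\phi(sn)$. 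Composing with the split inclusion $E(S/\mfrak m)\hookrightarrow\Hom_k(S_S,k)$ produces a split monomorphism of left $R$-modules
\[\Theta\colon\Hom_S\bigl(N,E(S/\mfrak m)\bigr)\hookrightarrow N^{*}\]
whose image is $\{\phi\in N^{*}:g_\phi(n)\in E(S/\mfrak m)\text{ for every }n\in N\}$. Now the submodule $T\subseteq\Hom_k(S_S,k)$ of functionals killed by some power of $\mfrak m=\rad S$ is an essential extension of the socle and hence lies inside $E(S/\mfrak m)$; so to prove $\Theta$ onto it suffices to show $g_\phi(n)\in T$ for every $\phi\in N^{*}$ and $n\in N$, that is, $\phi(\mfrak m^{j}n)=0$ for some $j$.

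This is the one place where $\Sigma$-pure-injectivity (rather than mere pure-injectivity) of $N$ is used. It is enough to establish the key fact that $N$ is $\mfrak m$-power-torsion over $S=\End_R(N)$, i.e.\ every $n\in N$ satisfies $\mfrak m^{j}n=0$ for some $j$; indeed then $g_\phi(n)$ is killed by $\mfrak m^{j}$ for every $\phi$. This property in turn follows from $N$ being locally of finite length over $S$ (a finite-length $S$-submodule of length $\ell$ containing $n$ is killed by $\mfrak m^{\ell}$, by Nakayama), which is proved from $\Sigma$-pure-injectivity as follows: pp-definable subgroups of $N$ are $\End_R(N)$-submodules; by $\Sigma$-pure-injectivity the lattice of pp-definable subgroups of $N$ has the descending chain condition; an endomorphism in $\mfrak m=\rad\End_R(N)$ strictly raises the pp-type of every nonzero element it does not kill (the pure-injective analogue of \ref{radicalmapsincreasepptype}, valid since $\End_R(N)$ is local; cf.\ \cite[4.3.45]{PSL}); and, exactly as in Herzog's treatment of the elementary socle series \cite[10.2]{Herzogduality}, a pp-definable subgroup minimal over a given $\End_R(N)$-submodule of $N$ has all its cosets modulo that submodule of a single pp-type and is therefore carried into the submodule by $\mfrak m$, so that $N$ is exhausted by a chain of $S$-submodules each annihilated, modulo the previous one, by $\mfrak m$.

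Granting the key fact, $\Theta$ is an isomorphism, whence $N^{*}\cong\Hom_S(N,E(S/\mfrak m))$, which is indecomposable by \cite{HerzogZieglersindcrit}. The main obstacle is precisely this key fact — that a $\Sigma$-pure-injective module is locally of finite length, and in particular radical-torsion, over its own endomorphism ring — since it is what forces the a priori $S$-theoretic local dual to coincide with the naive $k$-dual; the remaining steps are a formal adjunction argument together with the standard placement of $E(S/\mfrak m)$ inside $\Hom_k(S_S,k)$.
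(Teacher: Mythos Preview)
Your overall strategy — identifying $N^*$ with $\Hom_S(N,\Hom_k(S,k))$ via adjunction and then showing that every $g_\phi$ lands in the summand $E(S/\mfrak m)$ — is sound, and it is genuinely different from the paper's approach. But the argument breaks at exactly the point you flagged as the ``main obstacle'': the claim that $N$ is $\mfrak m$-power torsion over $S=\End_R(N)$.

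Your justification is that the elementary socle series $(\elsoc^\alpha(N))_\alpha$ satisfies $\mfrak m\cdot\elsoc^{\alpha+1}(N)\subseteq\elsoc^\alpha(N)$, so ``$N$ is exhausted by a chain of $S$-submodules each annihilated, modulo the previous one, by $\mfrak m$''. That is correct, and it does give $\mfrak m^j\cdot\elsoc^j(N)=0$ for \emph{finite} $j$. But the elementary socle length of a $\Sigma$-pure-injective module can exceed $\omega$: the DCC on pp-definable subgroups forces every descending chain to be finite, yet it does not bound the lengths of such chains below a fixed element. For $n\in\elsoc^{\omega+1}(N)\setminus\elsoc^\omega(N)$ one only gets $\mfrak m\,n\subseteq\elsoc^\omega(N)=\bigcup_j\elsoc^j(N)$, and since $\mfrak m\,n$ need not be finitely generated there is no single $j$ with $\mfrak m\,n\subseteq\elsoc^j(N)$, hence no finite power of $\mfrak m$ need kill $n$. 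So ``locally of finite length over $S$'' does not follow, and your reduction to $g_\phi(n)\in T$ is stuck.

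The paper's proof sidesteps this entirely. It builds the comparison map in the opposite direction, sending $f\in\Hom_S(N,E(S/\mfrak m))$ to $\mu\circ f\in N^*$ for a fixed $k$-linear $\mu:E(S/\mfrak m)\to k$ not killing the socle, and then proves surjectivity by transfinite induction along the elementary socle series: at successor stages one corrects a given $\nu\in N^*$ on $\elsoc^{\alpha+1}(N)$ using injectivity of $E(S/\mfrak m)$, while at limit ordinals one invokes pure-injectivity (algebraic compactness) of the local dual $\Hom_S(N,E(S/\mfrak m))$ to solve the resulting infinite system of coset conditions. That compactness step is precisely what replaces your missing ``$\mfrak m$-power torsion'' fact and makes the argument go through for arbitrary socle length.
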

\begin{proof}
Since $R$ is a $k$-algebra, $S$ is also a $k$-algebra. Let $\mu:E(S/\mfrak{m})\rightarrow S/\mfrak{m}=k$ be a $k$-linear map with $S/\mfrak{m}\nsubseteq \ker \mu$. The map from $\Hom_S(N,E(S/\mfrak{m}))$ to $\Hom_k(N,k)$ given by $f\mapsto \overline{f}:=\mu\circ f$ is a left $R$-module embedding. It is an embedding since $S/\mfrak{m}$ is essential in $E(S/\mfrak{m})$ and hence for any $0\neq f\in \Hom_S(N,E(S/\mfrak{m}))$, $\im f\cap S/\mfrak{m}\neq 0$.

We now prove the following two statements:
\begin{enumerate}
\item Let $\alpha$ be an ordinal. If $\nu\in \Hom_k(N,k)$ and $\nu|_{\elsoc^\alpha(N)}=0$ then there exists $f\in \Hom_S(N,E(S/\mfrak{m}))$ such that $(\nu-\overline{f})|_{\elsoc^{\alpha+1}(N)}=0$.
\item Let $\lambda$ be a limit ordinal. If for all $\alpha<\lambda$ there exists $g_\alpha\in \Hom_S(N,E(S/\mfrak{m}))$ such that $(\nu-\overline{g_\alpha})|_{\elsoc^\alpha(N)}=0$ and $(\overline{g_\alpha}-\overline{g_\beta})|_{\elsoc^\alpha(N)}=0$ for $\alpha<\beta<\lambda$ then there exists $g_\lambda\in \Hom_S(N,E(S/\mfrak{m}))$ such that $(\nu-\overline{g_\lambda})|_{\elsoc^\lambda(N)}=0$.
\end{enumerate}

1. Let $\nu\in \Hom_k(N,k)$ be such that $\nu|_{\elsoc^\alpha(N)}=0$. Then $\ker\nu\cap\elsoc^{\alpha+1}(N)$ is an $S$-submodule of $\elsoc^{\alpha+1}(N)$ and $\nu$ induces an $S$-module homomorphism $\nu|_{\elsoc^{\alpha+1}(N)}$ from $\elsoc^{\alpha+1}(N)/\ker\nu\cap\elsoc^{\alpha+1}(N)$ to $k=S/\mfrak{m}$. Post-composing $\nu|_{\elsoc^{\alpha+1}(N)}$ with the embedding of $S/\mfrak{m}$ into $E(S/\mfrak{m})$ we get a morphism $\lambda:\elsoc^\alpha(N)/\ker\nu\cap\elsoc^{\alpha+1}(N)\rightarrow E(S/\mfrak{m})$. Since $E(S/\mfrak{m})$ is injective, $\lambda$ extends to a map from $N/\ker\nu\cap\elsoc^{\alpha+1}(N)\rightarrow E(S/\mfrak{m})$. Pre-composing this map with the projection of $N$ onto $N/\ker\nu\cap\elsoc^{\alpha+1}(N)\rightarrow E(S/\mfrak{m})$, we get an $S$-homomorphism $f$ from $N$ to $E(S/\mfrak{m})$ such that $(\nu-\overline{f})|_{\elsoc^{\alpha+1}(N)}=0$.

2. For each $\alpha<\lambda$, let $\{\phi^\alpha_i\st i\in I_\alpha\}$ be the set of pp-1-formulas such that $\phi_i^\alpha(N)\subseteq \elsoc^\alpha(N)$. All finite subsets of the system $\phi_i^\alpha(x-g_\alpha)$ where $\alpha<\lambda$ and $i\in I_\alpha$ have a solution in $\Hom_S(N,E(S/\mfrak{m}))$, namely $x=g_\beta$ where $\beta$ is the maximal ordinal occurring as a superscript of one of the $\phi^\alpha_i$s of the finite system. So, since $\Hom_S(N,E(S/\mfrak{m}))$ is pure-injective, there exists a $g_\lambda\in \Hom_S(N,E(S/\mfrak{m}))$ such that $\phi^\alpha_i(g_\lambda-g_\alpha)$ holds for all $\alpha <\lambda$ and $i\in I_\alpha$. Thus $(\nu-\overline{g_\lambda})|_{\elsoc^\lambda(N)}=0$.

Now to complete the proof, ordinal induction using 1. and 2. gives us that for every $\nu\in \Hom_k(N,k)$ there exists $f\in \Hom_S(N,E(S/\mfrak{m}))$ such that $(\nu-\overline{f})|_{\elsoc^\lambda(N)}=0$ where $\lambda$ is the elementary socle length of $N$. Thus $\nu-\overline{f}=0$ on all of $N$. Thus the local dual is isomorphic to the $k$-dual.
\end{proof}


\section{Ray tubes from generalised ray tubes}

Given a ray tube with translation quiver $Q(m;n_1,\ldots,n_m)$ we describe a generalised ray tube.

For each $j\in\N$, let $M_j:=S_0^0[j]\oplus\ldots S_m^0[j]$, let $\psi_j:M_j\rightarrow M_{j+1}$ be given by the matrix with $\mu_i^0[j]$ in the $i+1$st diagonal entry and zeros everywhere else and let $\phi_j$ be the matrix with compositions of $\lambda_i^{n_i}[j]\circ\ldots\circ\lambda_i^0[j]$ in the $(i+2, i+1)$ entry for $0\leq i <m$ and $\lambda_m^{n_i}[j]\circ\ldots\circ\lambda_m^0[j]$ in the $(m,m)$ entry, and zeros in all other entries.

For $1\leq l\leq\max{n_i}=n$, let $P^{l}=S_0^{l}[1]\oplus\ldots\oplus S_m^{l}[1]$ where $S_i^{l}[1]$ is taken to be zero if $l>n_i$ and let $\alpha^l$ be the diagonal embedding with entries $\lambda_i^l[1]$.

A module over a finite-dimensional algebra is \textbf{generic} if it is indecomposable and if its pp-$1$-lattice is finite-length (equivalently if it is indecomposable and finite-length over its endomorphism ring).

\begin{theorem}
The Ziegler closure of an Auslander-Reiten component $C$ over a finite-dimensional $k$-algebra with Auslander-Reiten quiver $Q(m;n_1,\ldots,n_m)$ consists of
\begin{enumerate}
\item the finite-dimensional indecomposable modules in $C$
\item  For each $0\leq i< m$ and $0\leq k\leq n_i$, an indecomposable $\Sigma$-pure-injective modules $S_i^k[\infty]:=\underrightarrow{\lim}S_i^k[j]$
\item For each $0\leq i< m$, an indecomposable pure-injective $\widehat{S_i^{0}}:=\underleftarrow{\lim}S_i^{0}[1+m(j-1)] $ which has acc on pp-definable subgroups
\item finitely many generic modules $G_1,\ldots,G_d$.
\end{enumerate}

\end{theorem}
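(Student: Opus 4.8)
The plan is to assemble the results of the preceding sections. Starting from the Auslander--Reiten quiver $Q(m;n_1,\dots,n_m)$ one forms, as at the start of the previous section, the generalised ray tube $((M_j,\phi_j,\psi_j)_j,(P^l,\alpha^l)_{l=1}^{n})$ with $M_j=\bigoplus_{i}S_i^0[j]$ and $P^l=\bigoplus_i S_i^l[1]$ (where $S_i^l[1]:=0$ if $l>n_i$), the $\psi_j$ and the $\overline{\psi_j^l}$ being the diagonal matrices with entries $\mu_i^0[j]$, respectively $\mu_i^l[j]$, and the $\alpha^l$ the diagonal embeddings with entries $\lambda_i^l[1]$. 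The finitely presented modules ``in the tube'' are exactly the $S_i^k[j]$, so the finite-dimensional indecomposables of $C$ are precisely those in the tube. By Theorem~\ref{intfunmappingontogenraytube} and Corollary~\ref{generalisedtubesclosure}(1), every infinite-dimensional module in the Ziegler closure of $C$ is a direct summand of one of $M[\infty]$, $P^1[\infty],\dots,P^{n}[\infty]$, $G$ or $\widehat M$; it therefore remains to decompose these five modules and to handle the finite-dimensional part of the closure.

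Since $-\otimes(\widehat M\oplus\bigoplus P^i)$ commutes with direct limits and finite direct sums and the connecting maps $\psi_j$, $\overline{\psi_j^l}$ are diagonal, $M[\infty]=\bigoplus_i S_i^0[\infty]$ and $P^l[\infty]=\bigoplus_i S_i^l[\infty]$, where $S_i^k[\infty]=\underrightarrow{\lim}_j S_i^k[j]$ along the $\mu_i^k[j]$. By Proposition~\ref{directlimitsalongmuisind} each $S_i^k[\infty]$ is indecomposable with $\End(S_i^k[\infty])/\rad\End(S_i^k[\infty])\cong k$, and by Corollary~\ref{generalisedtubesclosure}(3) it is $\Sigma$-pure-injective, being a summand of $M[\infty]$ or $P^l[\infty]$; together these give the modules in (2). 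For $G=F_0^{n}Q(V)\otimes(\widehat M\oplus\bigoplus P^i)$: by Theorem~\ref{indoveronepointextensions} the pp-$1$-lattice of $F_0^{n}Q(V)$ has finite length, hence so do all its pp-$N$-lattices, and hence so does the pp-$1$-lattice of $G$ by the first listed property of interpretation functors; thus $G$ has dcc on pp-definable subgroups, is $\Sigma$-pure-injective, and decomposes as a finite direct sum of indecomposables each with finite-length pp-$1$-lattice, which are the generic modules $G_1,\dots,G_d$ of (4).

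The heart of the argument is $\widehat M=\underleftarrow{\lim}_j M_j$, and this is where the work of Sections~\ref{shortembeddings} and \ref{elsocanddual} is used. Reindexing the inverse system along the cofinal chain $M_1\leftarrow M_{1+m}\leftarrow M_{1+2m}\leftarrow\cdots$ and unwinding the cyclic twist in the $\phi_j$ so that a thread is determined componentwise, one obtains $\widehat M\cong\bigoplus_{i=0}^{m-1}\widehat{S_i^{0}}$ with $\widehat{S_i^{0}}=\underleftarrow{\lim}_j S_i^{0}[1+m(j-1)]$ along the $\lambda_i^0[1+mj\to 1+m(j-1)]$; each $\widehat{S_i^{0}}$ has acc on pp-definable subgroups by Corollary~\ref{generalisedtubesclosure}(4). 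To see that $\widehat{S_i^{0}}$ is indecomposable, consider the direct limit $(S_i^0)^*[\infty]=\underrightarrow{\lim}_j (S_i^0[1+m(j-1)])^*$ along the duals $\lambda_i^0[1+mj\to 1+m(j-1)]^*$. Since the $k$-dual converts colimits into limits of $k$-duals and $X^{**}\cong X$ for finite-dimensional $X$, we have $((S_i^0)^*[\infty])^*\cong\widehat{S_i^{0}}$, hence $((S_i^0)^*[\infty])^{**}\cong\widehat{S_i^{0}}^{*}$, which has dcc on pp-definable subgroups by Lemma~\ref{kdualandduality} (as $\widehat{S_i^{0}}$ has acc); since the evaluation map $(S_i^0)^*[\infty]\hookrightarrow((S_i^0)^*[\infty])^{**}$ is a pure embedding, $(S_i^0)^*[\infty]$ also has dcc and so is $\Sigma$-pure-injective. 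By Proposition~\ref{directlimitalonglambdadual} it is moreover indecomposable with $\End((S_i^0)^*[\infty])/\rad\End((S_i^0)^*[\infty])\cong k$. Proposition~\ref{kdualisind} now applies and shows that its $k$-dual $\widehat{S_i^{0}}$ is indecomposable --- this is the assertion that the inverse limit along a coray is indecomposable --- which gives (3); one also notes that indecomposability of $(S_i^0)^*[\infty]$ alone would force indecomposability of $\widehat{S_i^{0}}$, since a nontrivial decomposition of a module over a $k$-algebra dualises to one of its $k$-dual.

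Finally, for (1): a finite-dimensional indecomposable in $\overline C$ is pure-injective and, via the interpretation functor, a direct summand of $FN$ for some indecomposable pure-injective $R_n$-module $N$; if $N$ is of finite length then by Theorem~\ref{descfpind} it is of $T_n$-type or of $F_0^aF_1^{n-a}(V/\mfrak{m}^b)$-type, all finite-dimensional over $k$, and --- as in the proof of Theorem~\ref{intfunmappingontogenraytube} --- $F$ carries these precisely onto direct sums of the modules $S_i^k[j]$ of the tube; if $N$ is infinite-dimensional then $FN$ is one of $M[\infty],P^l[\infty],\widehat M,G$, none of which has a finite-dimensional indecomposable direct summand. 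Hence the only finite-dimensional modules in $\overline C$ are those of $C$, which completes the list. I expect the main obstacle to be the third step: correctly unwinding the cyclic twist to identify $\widehat M$ with $\bigoplus_i\widehat{S_i^{0}}$, and verifying all three hypotheses of Proposition~\ref{kdualisind} ($\Sigma$-pure-injectivity, locality of the endomorphism ring, residue field $k$) for $(S_i^0)^*[\infty]$.
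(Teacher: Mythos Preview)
Your proof is correct and follows essentially the same route as the paper: build the generalised ray tube, apply Theorem~\ref{intfunmappingontogenraytube} and Corollary~\ref{generalisedtubesclosure} to reduce to the five modules $M[\infty],P^l[\infty],\widehat M,G$, decompose each along the diagonal, and then invoke Propositions~\ref{directlimitsalongmuisind}, \ref{directlimitalonglambdadual} and~\ref{kdualisind} for indecomposability. In several places you are more careful than the paper's own proof---you justify $\Sigma$-pure-injectivity of $(S_i^0)^*[\infty]$ via the pure embedding into the double dual, and you address why no new finite-dimensional points appear, both of which the paper simply asserts.

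One small slip, which does not affect your main argument: the parenthetical claim that ``indecomposability of $(S_i^0)^*[\infty]$ alone would force indecomposability of $\widehat{S_i^{0}}$'' is not right. A nontrivial decomposition of $\widehat{S_i^0}=((S_i^0)^*[\infty])^*$ dualises only to a decomposition of $((S_i^0)^*[\infty])^{**}$, not of $(S_i^0)^*[\infty]$ itself, and pure embedding into the double dual does not let you pull that back. This is exactly the difficulty that Proposition~\ref{kdualisind} is there to overcome, and since you invoke that proposition correctly in the main line, your proof stands; just delete the side remark.
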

\begin{proof}
Above we have shown how to construct a generalised ray tube from a ray tube. Thus all modules in the Ziegler closure of $C$ are either finite-dimensional or direct summands of $\underrightarrow{\lim}M_j$, $\underrightarrow{\lim}_jP_j^k$ for $1\leq k\leq n$, $\widehat{M}$ or $G$.

By construction $\underrightarrow{\lim}M_j=\oplus_{i=0}^{m-1}\underrightarrow{\lim}S_i^0[j]$ and $\underrightarrow{\lim}_jP_j^k=\oplus_{i=0}^{m-1}\underrightarrow{\lim}S_i^k[j]$. By \ref{directlimitsalongmuisind}, $\underrightarrow{\lim}S_i^k[j]$ is indecomposable for $0\leq i< m$ and $0\leq k\leq n_i$. By \ref{generalisedtubesclosure},  $\underrightarrow{\lim}M_j$ and $\underrightarrow{\lim}_jP_j^k$ for $1\leq k\leq n$ are $\Sigma$-pure-injective, thus $\underrightarrow{\lim}S_i^k[j]$ is $\Sigma$-pure-injective for $0\leq i< m$ and $0\leq k\leq n_i$.

By construction $\widehat{M}:=\oplus_{i=0}^{m-1}\underleftarrow{\lim}S_i^{0}[1+m(j-1)]=\oplus_{i=0}^{m-1}(\underrightarrow{\lim}S_i^{0}[1+m(j-1)])^*$ and $\widehat{M}$ has acc on pp-definable subgroups. Thus each $\underrightarrow{\lim}S_i^{0}[1+m(j-1)]$ has dcc on pp-definable subgroups and hence is pure-injective. By \ref{directlimitalonglambdadual}, $\underrightarrow{\lim}S_i^{0}[1+m(j-1)]$ is indecomposable. So, by  \ref{kdualisind}, $\underleftarrow{\lim}S_i^{0}[1+m(j-1)]=(\underrightarrow{\lim}S_i^{0}[1+m(j-1)])^*$ is indecomposable.

Since $G$ is finite-endolength, $G=G_1\oplus\ldots\oplus G_d$ where each $G_i$ is indecomposable and finite-endolength.
\end{proof}

In the following corollary we are less explicit in our description of the modules in the closure of a coray tube. This is because to do so would require the introduction of yet more notation.

\begin{cor}
The Ziegler closure of an Auslander-Reiten component $C$ over a finite-dimensional $k$-algebra with Auslander-Reiten quiver $Q^*(m;n_1,\ldots,n_m)$ consists of
\begin{enumerate}
\item the finite-dimensional indecomposable modules in $C$
\item  a direct limit along each ray
\item  an inverse limit along each coray
\item finitely many generic modules $G_1,\ldots,G_d$.
\end{enumerate}
\end{cor}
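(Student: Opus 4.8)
The plan is to obtain this from the preceding theorem by duality, passing to the opposite algebra. Since $Q^*(m;n_1,\ldots,n_m)$ is the opposite translation quiver of $Q(m;n_1,\ldots,n_m)$, the standard $k$-duality $D=\Hom_k(-,k)\colon\mod-\mcal{A}\to\mod-\mcal{A}^{op}$ is an exact contravariant equivalence sending almost split sequences to almost split sequences and reversing irreducible maps, so it carries the coray tube component $C$ to a ray tube component $C^\vee$ of $\mod-\mcal{A}^{op}$ with Auslander--Reiten quiver $Q(m;n_1,\ldots,n_m)$, interchanging rays and corays. Applying the preceding theorem to $C^\vee$ over $\mcal{A}^{op}$ describes the Ziegler closure $\overline{C^\vee}\subseteq\Zg_{\mcal{A}^{op}}$ as the union of the finite-dimensional indecomposables of $C^\vee$, a $\Sigma$-pure-injective direct limit $S_i^k[\infty]$ along each ray of $C^\vee$ with $\End(S_i^k[\infty])/\rad\End(S_i^k[\infty])\cong k$ (Proposition~\ref{directlimitsalongmuisind}), an indecomposable pure-injective inverse limit $\widehat{S_i^{0}}$ along each coray of $C^\vee$, and finitely many generics.

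Next I would read off the four families over $\mcal{A}$. Item~(1): on finite-dimensional modules $D$ is a duality, so it identifies the finite-dimensional points of $\overline{C^\vee}$ with the finite-dimensional indecomposables of $C$, and, arguing as in the proof of the preceding theorem, these are exactly the finite-dimensional modules in $\overline{C}$. Item~(3): each $S_i^k[\infty]$ is $\Sigma$-pure-injective with endomorphism ring modulo radical equal to $k$, so Proposition~\ref{kdualisind} makes its $k$-dual an indecomposable pure-injective $\mcal{A}$-module; as $D$ turns a direct limit into an inverse limit and a ray of $C^\vee$ into a coray of $C$, this $k$-dual is an indecomposable inverse limit along a coray of $C$. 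Item~(4): $D$ carries generic $\mcal{A}^{op}$-modules to generic $\mcal{A}$-modules since $\Hom_k(-,k)$ preserves finite endolength (Lemma~\ref{kdualandduality}), and there are only finitely many. Item~(2), the direct limits along the rays of $C$, I would handle directly rather than through $D$: the duals of Lemmas~\ref{muareshort} and~\ref{duallambdashort} show the relevant transition maps are short embeddings, and then the duals of Propositions~\ref{directlimitsalongmuisind} and~\ref{directlimitalonglambdadual} (that is, Proposition~\ref{topofendo} applied to a ray of $C$) show the direct limit along each such ray is indecomposable with endomorphism ring modulo radical equal to $k$; it lies in $\overline{C}$ because it is a direct limit of modules in $C$ and is $\Sigma$-pure-injective. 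That these four families exhaust $\overline{C}$ follows from the dual of Theorem~\ref{intfunmappingontogenraytube} and Corollary~\ref{generalisedtubesclosure}, built from a generalised coray tube exactly as a generalised ray tube was built from a ray tube above.

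The main obstacle is that $\Hom_k(-,k)$ does not commute with direct or inverse limits of infinite-dimensional modules, nor preserve pure-injectivity, so one cannot simply declare $\overline{C}=D(\overline{C^\vee})$ --- indeed the $k$-dual of $\widehat{S_i^{0}}$ over $\mcal{A}^{op}$ is a double $k$-dual, not the direct limit one wants, which is precisely why item~(2) must be argued directly. The clean resolution is to dualize the whole development leading to the preceding theorem: define a generalised coray tube, prove the duals of Proposition~\ref{bimoduleisprojective}, Theorem~\ref{intfunmappingontogenraytube}, Corollary~\ref{generalisedtubesclosure}, Lemmas~\ref{muareshort} and~\ref{duallambdashort}, and Propositions~\ref{directlimitsalongmuisind}, \ref{directlimitalonglambdadual}, \ref{kdualisind}, and re-run the argument of the preceding theorem verbatim. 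This is routine but notationally heavy, which is exactly why only the statement is recorded here.
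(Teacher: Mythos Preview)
Your approach is correct but takes a different, more laborious route than the paper. You propose to dualise the entire development: introduce generalised coray tubes, re-prove the analogues of Proposition~\ref{bimoduleisprojective}, Theorem~\ref{intfunmappingontogenraytube} and Corollary~\ref{generalisedtubesclosure}, and then rerun the argument of the preceding theorem. This works, and your handling of the individual items (in particular your observation that $k$-duality alone does not let you recover the direct limits along rays of $C$ from the modules in $\overline{C^\vee}$, so item~(2) must be argued separately) is sound.

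The paper avoids all of this by invoking Herzog's elementary duality. Since the Ziegler closure of a ray tube has m-dimension~$2$, the closure of $C$ (whose $k$-dual is a ray tube over $\mcal{A}^{op}$) also has m-dimension~$2$; hence every point in $\overline{C}$ is reflexive in Herzog's sense, and $N\mapsto DN$ is a bijection between the points of $\overline{C}$ and those of the closure of the $k$-dual ray tube $C^*$. Whenever $N^*$ is indecomposable one has $DN=N^*$, so the list for $\overline{C}$ can be read off directly from the list already obtained for $\overline{C^*}$ in the preceding theorem. Indecomposability of the inverse limits along corays comes from Proposition~\ref{kdualisind} as you say; indecomposability and $\Sigma$-pure-injectivity of the direct limits along rays come simply from the fact that their $k$-duals are the inverse limits along corays of $C^*$, which the preceding theorem already showed are indecomposable with acc. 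Thus the paper never needs a generalised coray tube or any dual interpretation functor: reflexivity supplies the exhaustion step for free. Your approach buys self-containedness at the cost of repeating the whole construction; the paper's buys brevity at the cost of importing Herzog's reflexivity criterion.
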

\begin{proof}
Since the Ziegler closure of a ray tube has m-dimension $2$ and hence the Ziegler closure of a coray tube also has m-dimension 2, all points in the Ziegler closure of $C$ are reflexive in the sense of Herzog \cite{Herzogduality} (See also \cite[page 271]{PSL}). Thus we know that there is a bijective correspondence $N\mapsto DN$ between points in the Ziegler closure of $C$ and points in the Ziegler closure of its $k$-dual such that $N\in\left(\phi/\psi\right)$ if and only if $DN\in\left(D\psi/D\phi\right)$. If the $k$-dual of an indecomposable pure-injective $N$ is indecomposable then $DN=N^*$ by \ref{kdualandduality}. On finite-dimensional points this bijection is just taking the $k$-dual.

The inverse limit along a coray is indecomposable and pure-injective by \ref{kdualisind}. The dual of the direct limit along a ray is indecomposable and has acc on pp-definable subgroups. Thus the direct limit along a ray is indecomposable and $\Sigma$-pure-injective.

Thus all modules on our list are indecomposable and pure-injective.

For any generic module $G$, $DG$ is also a generic module since its pp-$1$-lattice must be finite-length.

Comparing this list of modules with the list for the closure of a ray tube and taking $k$-duals of the appropriate modules, implies that the above list is complete.
\end{proof}

We now end the article with some open questions and directions for future research.

\begin{question}
Does every stable/ray/coray tube have a unique generic module in its closure?
\end{question}

The above question is open even for homogeneous tubes. There is an attempt at an example of a tube with more that one generic over a non-finite-dimensional algebra in \cite[15.1.11]{PSL} but this example doesn't seem to be a tube.

\begin{question}
Do there exist finite-dimensional algebras $A$ and $B$ with Auslander-Reiten components $C$ and $D$ such that $C$ is isomorphic to $D$ as a translation quiver but the Ziegler closure of $C$ is not homeomorphic to the Ziegler closure of $D$? More generally, can two finite-dimensional algebras have isomorphic auslander-reiten quivers but non-homeomorphic Ziegler spectra?
\end{question}

\begin{question}
Is the m-dimension of the closure of an Auslander-Reiten component of finite-dimensional algebra always finite?
\end{question}

\nocite{*}
\bibliographystyle{halpha}
\bibliography{onepointextensionsofvaluationdomains}
\end{document}